\documentclass[11pt,a4paper]{amsart}

\newcommand{\C}{\mathbb{C}}
\newcommand{\R}{\mathbb{R}}
\newcommand{\N}{\mathbb{N}}

\setlength{\parindent}{0in}
\setlength{\parskip}{0.6em}

\usepackage{amssymb,amsmath,amsthm,enumerate}
\usepackage{graphicx}
\usepackage{color}

\usepackage{hyperref}
\usepackage{cite}

\usepackage{etoolbox}
\newtheorem{theorem}{Theorem}[section]
\newtheorem{lemma}[theorem]{Lemma}
\newtheorem{proposition}[theorem]{Proposition}
\newtheorem{corollary}[theorem]{Corollary}

\preto{\section}{}
\preto{\subsection}{}

\makeatletter
\@addtoreset{theorem}{chapter}
\@addtoreset{theorem}{section}
\@addtoreset{theorem}{subsection}
\makeatother

\usepackage{url}
\begin{document}

\title{Counting joints with multiplicities}
\author{Marina Iliopoulou}
\address{School of Mathematics and Maxwell Institute for Mathematical Sciences, University of Edinburgh, Edinburgh, EH9 3JZ, UK}
\email{\href{mailto:M.Iliopoulou@sms.ed.ac.uk}{M.Iliopoulou@sms.ed.ac.uk}}
\maketitle

\begin{abstract}

Let $\mathfrak{L}$ be a collection of $L$ lines in $\R^3$ and $J$ the set of joints formed by $\mathfrak{L}$, i.e. the set of points each of which lies in at least 3 non-coplanar lines of $\mathfrak{L}$. It is known that $|J| \lesssim L^{3/2}$ (first proved by Guth and Katz). For each joint $x \in J$, let the multiplicity $N(x)$ of $x$ be the number of triples of non-coplanar lines through $x$. We prove here that $\sum_{x \in J}N(x)^{1/2} \lesssim L^{3/2}$, while in the last section we extend this result to real algebraic curves in $\R^3$ of uniformly bounded degree, as well as to curves in $\R^3$ parametrised by real polynomials of uniformly bounded degree.

\end{abstract}
\section{Introduction}

A point $x \in \R^n$ is a joint for a collection $\mathfrak{L}$ of lines in $\R^n$ if there exist at least $n$ lines in $\mathfrak{L}$ passing through $x$, whose directions span $\R^n$. The problem of bounding the number of joints by a power of the number of the lines forming them first appeared in \cite{Chazelle_Edelsbrunner_Guibas_Pollack_Seidel_Sharir_Snoeyink_1992}, where it was proved that if $J$ is the set of joints formed by a collection of $L$ lines in $\R^3$, then $|J| =O(L^{7/4})$. Successive progress was made in improving the upper bound of $|J|$ in three dimensions, by Sharir, Sharir and Welzl, and Feldman and Sharir (see \cite{MR1280600},\cite{MR2047237},\cite{MR2121298}). Wolff  had already observed in \cite{MR1660476} that there exists a connection between the joints problem and the Kakeya problem, and, using this fact, Bennett, Carbery and Tao found an improved upper bound for $|J|$, with a particular assumption on the angles between the lines forming each joint (see \cite{MR2275834}). Eventually, Guth and Katz provided a sharp upper bound in \cite{Guth_Katz_2008}; they showed that, in $\R^3$, $|J|=O( L^{3/2})$. The proof was an adaptation of Dvir's algebraic argument  in \cite{MR2525780} for the solution of the finite field Kakeya problem, which involves working with the zero set of a polynomial. Dvir, Guth and Katz induced dramatic developments with this work, because they used for the first time the polynomial method to approach problems in incidence geometry. Further work was done by Elekes, Kaplan and Sharir in \cite{MR2763049}, and finally, a little later, Kaplan, Sharir and Shustin (in \cite{MR2728035}) and Quilodr\'{a}n (in \cite{MR2594983}) independently solved the joints problem in $n$ dimensions, using again algebraic techniques, simpler than in \cite{Guth_Katz_2008}.

In particular, Quilodr\'{a}n and Kaplan, Sharir and Shustin showed that, if $\mathfrak{L}$ is a collection of $L$ lines in $\R^n$, $n \geq 2$, and $J$ is the set of joints formed by $\mathfrak{L}$, then \begin{equation} |J| \leq c_n \cdot L^{\frac{n}{n-1}}, \label{eq:basic} \end{equation} where $c_n$ is a constant depending only on the dimension $n$. 

In this setting, we define the multiplicity $N(x)$ of a joint $x$ as the number of $n$-tuples of lines of $\mathfrak{L}$ through $x$, whose directions span $\R^n$; we mention here that we consider the $n$-tuples to be unordered, although considering them ordered would not cause any substantial change in what follows. 

From \eqref{eq:basic} we know that $ \sum_{x \in J}1 \leq c_n \cdot L^{\frac{n}{n-1}}$. A question by Anthony Carbery is if one can improve this to get
\begin{equation} \sum_{x \in J}N(x)^{\frac{1}{n-1}} \leq c_n' \cdot L^{\frac{n}{n-1}}, \label{eq:unsolved} \end{equation} where $c_n'$ is, again, a constant depending only on $n$. We clarify here that the choice of $\frac{1}{n-1}$ as the power of the multiplicities $N(x)$ on the left-hand side of \eqref{eq:unsolved} does not affect the truth of \eqref{eq:unsolved} when each joint has multiplicity $1$, while it is the largest power of $N(x)$ that one can hope for, since it is the largest power of $N(x)$ that makes \eqref{eq:unsolved} true when all the lines of $\mathfrak{L}$ are passing through the same point and each $n$ of them are linearly independent (in which case the point is a joint of multiplicity $\binom{L}{n} \sim L^n$). Also, \eqref{eq:unsolved} obviously holds when $n=2$; in that case, the left-hand side is smaller than the number of all the pairs of the $L$ lines, i.e. than $\binom{L}{2}\sim L^2$.

In fact, the above question can also be seen from a harmonic analytic point of view (again, see \cite{MR1660476}). Specifically, if $T_{\omega}$, for $\omega \in \Omega \subset S^{n-1}$, are tubes in $\R^n$ with length 1 and cross section an $(n-1)$-dimensional ball of radius $\delta$, such that their directions $\omega \in \Omega$ are $\delta$-separated, then the Kakeya maximal operator conjecture asks for a sharp upper bound of the quantity
\begin{displaymath}\int_{x \in \R^n} \Bigg(\sum_{\omega \in \Omega} \chi_{T_{\omega}}(x)\Bigg)^{\frac{n}{n-1}}{\rm d}x= \int_{x \in \R^n} \#\{ \text{tubes }T_{\omega}\text{ through }x\}^{\frac{n}{n-1}}{\rm d}x.\end{displaymath}
On the other hand, in the case where a collection $\mathfrak{L}$ of lines in $\R^n$ has the property that, whenever $n$ of the lines meet at a point, they form a joint there, then, for all $x \in J$, $N(x)\sim \#\{$lines of $\mathfrak{L}$ through $x\}^n$, and thus the left-hand side of \eqref{eq:unsolved} is
\begin{displaymath}\sim \sum_{x \in J}\#\{\text{lines of }\mathfrak{L}\text{ through }x\}^{\frac{n}{n-1}}.\end{displaymath}
Therefore, in both cases, the problem lies in bounding analogous quantities, and thus the problem of counting joints with multiplicities is a discrete analogue of the Kakeya maximal operator conjecture.

We will indeed show that \eqref{eq:unsolved} holds in $\R^3$: 

\begin{theorem}\label{1.1} Let $\mathfrak{L}$ be a collection of $L$ lines in $\R^3$, forming a set of joints $J$. Then,
\begin{displaymath} \sum_{x \in J}N(x)^{1/2} \leq c \cdot L^{3/2}, \end{displaymath}
where $c$ is a constant independent of $\mathfrak{L}$. \end{theorem}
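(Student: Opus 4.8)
The plan is to adapt the polynomial method of Guth--Katz and Quilodr\'{a}n/Kaplan--Sharir--Shustin to the weighted count $S(\mathfrak{L}):=\sum_{x\in J}N(x)^{1/2}$, arguing by induction on $L=|\mathfrak{L}|$. The base cases $L\le L_0$ are trivial (since $N(x)\le\binom{k_x}{3}\le L^3$ and $|J|\le\binom L3$, where $k_x$ is the number of lines of $\mathfrak{L}$ through $x$), so fix $L>L_0$ and assume the bound for all collections of fewer lines. It is enough to exhibit one line $\ell_0\in\mathfrak{L}$ whose removal is cheap: writing $J',N'$ for the joints and multiplicities of $\mathfrak{L}\setminus\{\ell_0\}$ (note $J'\subseteq J$ and $N'\le N$),
\[ S(\mathfrak{L})-S(\mathfrak{L}\setminus\{\ell_0\})=\sum_{x\in\ell_0\cap J}\big(N(x)^{1/2}-N'(x)^{1/2}\big)\le\sum_{x\in\ell_0\cap J}\frac{M_{\ell_0}(x)}{N(x)^{1/2}}, \]
where $M_{\ell_0}(x)$ counts the unordered pairs of lines of $\mathfrak{L}$ through $x$ that together with $\ell_0$ span $\R^3$, and I used $a^{1/2}-b^{1/2}\le(a-b)/a^{1/2}$. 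If this quantity is $\lesssim L^{1/2}$ for some $\ell_0$ then the induction closes, using the elementary estimate $C(L-1)^{3/2}+c_1L^{1/2}\le CL^{3/2}$ valid for $C$ large; alternatively a bound of the form $\lesssim S(\mathfrak{L})^{1/3}$ suffices, since $t\mapsto t-c_1t^{1/3}$ is eventually increasing.

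To find such a line I would run the polynomial method with multiplicities. Put $m_x:=\lceil N(x)^{1/6}\rceil\ge 1$; then $\sum_x\binom{m_x+2}{3}\lesssim\sum_xN(x)^{1/2}=S(\mathfrak{L})$, so there is a nonzero polynomial $p$, of minimal degree $d\lesssim S(\mathfrak{L})^{1/3}$, vanishing to order $\ge m_x$ at every joint $x$ (and \eqref{eq:basic} may be invoked freely, e.g.\ via $|J|\lesssim L^{3/2}$, to keep such degrees of order $L^{1/2}$). Call a line \emph{inner} if it lies in $Z(p)$ and \emph{outer} otherwise. Then: (a) an outer line $\ell$ meets $Z(p)\supseteq J$ in $\le d$ points, and, weighing points by their vanishing orders, $\sum_{x\in\ell\cap J}m_x\le d$; (b) if every line of $\mathfrak{L}$ were inner, then at each joint $p$ vanishes identically on three non-coplanar lines, hence each $\partial_ip$ vanishes at $x$ to order $\ge m_x-1$ while $\deg\partial_ip\le d-1$.

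The main obstacle is twofold. First, unlike in the unweighted joints theorem, (b) does not directly contradict the minimality of $d$: lowering every vanishing order by one is a strictly easier interpolation problem, so $\partial_ip$ need not be a competitor of smaller ``weighted degree.'' To still force out an outer line one must iterate the differentiation --- a nonzero constant vanishes nowhere, so some derived polynomial has an outer line; note $\partial_ip$ need not vanish on the lines on which $p$ vanishes --- carefully tracking how the $m_x$ and $d$ decay together, or else study the lines forced onto the surface $Z(p)$. Second, for an outer line $\ell_0$ the bound $\sum_{x\in\ell_0\cap J}m_x\le d$ controls the cost $\sum_{x\in\ell_0\cap J}M_{\ell_0}(x)/N(x)^{1/2}$ only when $M_{\ell_0}(x)\lesssim N(x)^{2/3}$ at each joint on $\ell_0$; since $\sum_{\ell\ni x}M_\ell(x)=3N(x)$ this holds for all but $O(N(x)^{1/3})$ directions through $x$, but it fails at a degenerate joint where many lines through $x$ are coplanar and $\ell_0$ is a lone transversal (then deleting $\ell_0$ destroys the joint and $M_{\ell_0}(x)\sim N(x)$). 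So $\ell_0$ must be chosen outer \emph{and} good at every joint on it, and combining the polynomial's supply of outer lines with a count of bad line--joint pairs is the delicate heart of the argument. Once $\ell_0$ is produced with defect $\lesssim L^{1/2}$ the induction closes as above, and the analogue of \eqref{eq:basic} for the relevant variety lets the scheme transfer to the real algebraic curves of the final section.
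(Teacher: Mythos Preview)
Your approach is genuinely different from the paper's, and as it stands it has a real gap.

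\textbf{What the paper does instead.} The paper does not run a Quilodr\'{a}n-style ``remove one cheap line'' induction on $S(\mathfrak{L})$. It first dyadically decomposes the joints by multiplicity and by line-count, setting $J_N^k=\{x:\,N\le N(x)<2N,\ k\le k_x<2k\}$, and proves the refined estimate
\[
|J_N^k|\cdot N^{1/2}\ \lesssim\ \frac{L^{3/2}}{k^{1/2}}+\frac{L}{k}\,N^{1/2}
\]
(Proposition~1.2) by Guth--Katz \emph{polynomial partitioning} (cell decomposition), not by a vanishing-to-high-order interpolating polynomial. The cellular case is handled by Szemer\'edi--Trotter inside cells; the algebraic case splits into a regular subcase (transversal lines forced off the surface) and a critical subcase (using the $d^2$ bound on critical lines). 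Induction on $L$ enters only in the algebraic case, via a ``popular lines'' subcollection $\mathfrak{L}'$ with $|\mathfrak{L}'|<L/100$. Theorem~1.1 then follows by summing Proposition~1.2 over dyadic $N$ and $k$, using $k\gtrsim N^{1/3}$.

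\textbf{Where your proposal breaks.} You correctly isolate two obstacles, but you resolve neither, and the second one is exactly why the paper takes a different route.

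First, your minimality argument does not force an outer line with usable control. With vanishing orders $m_x$, the derivatives $\partial_i p$ solve a strictly easier interpolation problem, so they are not competitors; iterating differentiation eventually produces a polynomial with an outer line, but you have no bound tying $\sum_{x\in\ell_0\cap J} m_x$ for \emph{that} outer line to the original degree $d\lesssim S^{1/3}$.

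Second, and more seriously, the degenerate-joint obstruction is not cosmetic. At a joint $x$ with $k-1$ coplanar lines and a single transversal $\ell_0$, one has $N(x)\sim k^2$ and $M_{\ell_0}(x)=N(x)$, so removing $\ell_0$ costs $N(x)^{1/2}\sim k$, whereas your budget $m_x\sim N(x)^{1/6}\sim k^{1/3}$ is far too small. Your observation that $\sum_{\ell\ni x}M_\ell(x)=3N(x)$ shows only $O(N(x)^{1/3})$ lines through $x$ are bad, but gives no mechanism ensuring that the particular outer line(s) supplied by the polynomial avoid \emph{all} bad pairs across \emph{all} joints simultaneously. Nothing in the interpolating-polynomial construction sees the coplanarity structure that creates bad lines.

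The paper's dyadic decomposition in $k$ is precisely the device that neutralises this issue: once $N$ and $k$ are fixed with $N\lesssim k^3$, Lemmas~3.1--3.2 give $\gtrsim N/k^2$ transversal lines to any plane through each joint, and this quantitative non-degeneracy is what makes the incidence bookkeeping close. Without some analogue of the $(N,k)$-stratification, your line-removal scheme does not go through.
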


The basic tool for the proof of Theorem \ref{1.1} will be the Guth-Katz polynomial method, developed by Guth and Katz in \cite{Guth_Katz_2010}.

More particularly, we first need to consider, for all $N \in \N$, the subset $J_N$ of $J$, defined as follows:

$J_N:=\{x \in J: N \leq N(x) <2N\}$.

In addition, we define, for all $N, k \in \N$, the following subset of $J_N$:

$J_N^{k}:=\{x \in J_N$: $x$ intersects at least $k$ and less than $2k$ lines of $\mathfrak{L}\}$. 

We will then apply the Guth-Katz polynomial method in the same way as in \cite{Guth_Katz_2010}, from which we will decuce that:

\begin{proposition}\label{1.2} \textit{If $\mathfrak{L}$ is a collection of $L$ lines in $\R^3$ and $N, k\in \N$, then} \begin{displaymath}|J^{k}_{N}| \cdot N^{1/2} \leq c \cdot \bigg(\frac{L^{3/2}}{k^{1/2}} + \frac{L}{k}\cdot N^{1/2}\bigg), \end{displaymath}
\textit{where $c$ is a constant independent of $\mathfrak{L}$, $N$ and $k$.}\end{proposition}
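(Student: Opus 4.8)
The plan is to run the Guth--Katz polynomial partitioning method on the point set $J^k_N$ and then treat separately the joints that fall in the open cells of the partition and those that land on the partitioning hypersurface. Two elementary observations will carry most of the weight. First, if $x\in J^k_N$ then $N\le N(x)\le\binom{m(x)}{3}<\binom{2k}{3}$, so $N\lesssim k^3$ and $N^{1/2}\lesssim k^{3/2}$: the multiplicity cannot be large unless $k$ is. Second --- and this is where the hypothesis $N(x)\ge N$ is used \emph{geometrically} --- if the lines of $\mathfrak{L}$ through a joint $x$ all lay in the union of a single plane $\pi\ni x$ together with a bounded number of extra lines, then the number of non-coplanar triples through $x$ would be $O\!\big(k^2\cdot\#\{\text{lines through }x\text{ not in }\pi\}\big)$; hence any joint in $J^k_N$ must meet $\gtrsim N/k^2$ lines of $\mathfrak{L}$ that do not lie in any fixed plane through it. Before starting I would dispose of the trivial regime: if $|J^k_N|\le L/k$, then $|J^k_N|\,N^{1/2}\le\tfrac{L}{k}N^{1/2}$ and we are done by the second term, so I may assume $|J^k_N|>L/k$.

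Next I would apply the Guth--Katz partitioning theorem to $J^k_N$ with a degree parameter $d$ to be optimised at the end: this produces a non-zero polynomial $P$ of degree $\le d$ in three variables such that $\R^3\setminus Z(P)$ has $O(d^3)$ cells, each containing $\lesssim |J^k_N|/d^3$ points of $J^k_N$, and such that each line of $\mathfrak{L}$ either lies in $Z(P)$ or meets at most $d$ of the cells. Writing $L_C$ for the number of lines of $\mathfrak{L}$ meeting a cell $C$, one has $\sum_C L_C\lesssim dL$, and --- crucially --- a cell containing a joint of $J^k_N$ must have $L_C\ge k$, since the $\ge k$ lines through such a joint all pass into $C$. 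Inside $C$ the relevant joints form a set of joints of the sub-collection of lines meeting $C$, so [eq:basic] (or, by induction on $L$ after isolating the few cells that meet almost all the lines, the proposition itself) controls their number; feeding this back together with $\sum_C L_C\lesssim dL$, the emptiness of the low-incidence cells, and $N^{1/2}\lesssim k^{3/2}$, bounds the cellular contribution $\sum_C |J^k_N\cap C|\,N^{1/2}$ by the claimed right-hand side for a good choice of $d$.

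The remaining task is to bound $\sum_{x\in J^k_N\cap Z(P)}N(x)^{1/2}$, and here I would split according to whether $x$ is a smooth or a singular point of $Z(P)$. If $x$ is smooth, every line of $\mathfrak{L}$ through $x$ that is contained in $Z(P)$ is tangent to $Z(P)$ at $x$, hence lies in the tangent plane $T_xZ(P)$; by the second observation above, $x$ must then meet $\gtrsim N/k^2$ lines of $\mathfrak{L}$ \emph{not} contained in $Z(P)$, and each such line meets $Z(P)$ in at most $d$ points. Double counting these (line, joint) incidences against $\le dL$ gives $\#\{\text{smooth joints of }Z(P)\text{ in }J^k_N\}\lesssim dLk^2/N$, hence a contribution $\lesssim dLk^2/N^{1/2}$, again absorbable for the right $d$. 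The singular joints lie on the proper subvariety $Z(P)\cap Z(\nabla P)$; since some partial derivative $\partial_iP$ is a non-zero polynomial of degree $<d$ vanishing on all of them, one can either iterate the whole scheme on this lower-degree variety or invoke the minimality of $\deg P$ to finish.

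I expect the main obstacle to be the bookkeeping that makes the cellular bound, the smooth-hypersurface bound, and the inductive step all close for a single choice of $d$. The reason this is delicate --- and the reason the estimate has the two-term shape $\tfrac{L^{3/2}}{k^{1/2}}+\tfrac{L}{k}N^{1/2}$ rather than one clean term --- is that, unlike the full joint set, the family $J^k_N$ is \emph{not} monotone under deleting a line of $\mathfrak{L}$: removing a line can lower a joint's multiplicity and drop it into the window $[N,2N)$. Hence the clean iterative line-deletion argument that yields $|J|\lesssim L^{3/2}$ is not directly available, and one is forced to push the dyadic pieces $J^k_N$ through the partition and keep track of the extra term coming from the sub-collections of lines.
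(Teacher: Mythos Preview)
Your high-level plan matches the paper's: Guth--Katz partitioning, then cellular versus algebraic, then regular versus critical on the hypersurface. You also correctly isolate the key geometric fact (your ``second observation'') that through any $x\in J^k_N$ and for any plane $\pi\ni x$ there are $\gtrsim N/k^2$ lines of $\mathfrak{L}$ through $x$ not lying in $\pi$; this is exactly Lemma~\ref{3.2} and is what drives the regular subcase. However, two steps of your sketch do not close.

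In the cellular case, applying the joints bound \eqref{eq:basic} inside each cell and summing does not recover the $k^{-1/2}$ saving: one only obtains $\sum_C L_C^{3/2}\lesssim dL^{3/2}$, and summing the inductive bound over cells likewise picks up an unwanted factor of $d$ on the $\frac{L}{k}N^{1/2}$ term. The paper does not try to sum the cellular contributions at all. It fixes $d=AL^2S^{-1}k^{-3}$ from the start (the threshold coming from Szemer\'edi--Trotter applied globally to $J^k_N$), applies Szemer\'edi--Trotter \emph{inside each full cell} to get $L_{cell}\gtrsim S^{1/2}d^{-3/2}k^{3/2}$, and then counts line--boundary incidences against $Ld$ to force $A\lesssim 1$. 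For $A$ chosen large this is a contradiction, so the cellular case is \emph{eliminated}, not bounded.

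The more serious gap is your handling of the singular points. You propose either to iterate on $Z(\nabla P)$ or to invoke minimality of $\deg P$. But $d$ has already been fixed (not taken minimal), so there is no minimality to appeal to; and passing to $\partial_iP$ lowers the degree of the surface without reducing $L$, so the iteration does not terminate in anything useful for the multiplicity-weighted count. The paper's argument here is quite different, and is where the induction on $L$ actually enters. One first isolates the set $\mathfrak{L}'\subset\mathfrak{L}$ of ``popular'' lines, those containing $\gtrsim Sk/L$ of the joints on $Z$. If $|\mathfrak{L}'|\gtrsim L$, one shows that $\gtrsim L$ of these lines are \emph{critical lines} of $Z$, and Proposition~\ref{2.2.3} (at most $d^2$ critical lines) finishes directly. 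If instead $|\mathfrak{L}'|\le L/100$, one applies the induction hypothesis to the strictly smaller collection $\mathfrak{L}'$. The difficulty you correctly flag --- that a point of $J^k_N$ for $\mathfrak{L}$ need not land in any $J^{k'}_{N'}$ for $\mathfrak{L}'$ --- is handled by a companion to your second observation: if $x\in J^k_N$ has multiplicity at most $N/2$ as a joint for $\mathfrak{L}'$ (or fails to be a joint for $\mathfrak{L}'$), then $\gtrsim N/k^2$ of the lines through $x$ belong to $\mathfrak{L}\setminus\mathfrak{L}'$ (Lemma~\ref{3.1}), and those unpopular lines can be controlled by an incidence count. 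So your diagnosis of the non-monotonicity problem is right, but the mechanism that resolves it --- popular lines, the $d^2$ critical-line bound, and Lemma~\ref{3.1} --- is missing from your sketch and is not recoverable from ``iterate on $\nabla P$''.
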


Theorem \ref{1.1} will then follow from Proposition \ref{1.2} (details are displayed at the end of section \ref{3}). Finally, in the somewhat independent section \ref{4} of the paper we generalise the statement of Theorem \ref{1.1} for joints formed by real algebraic curves in $\R^3$ of uniformly bounded degree, as well as curves in $\R^3$ parametrised by real polynomials of uniformly bounded degree (Theorem \ref{4.2.1} and Corollary \ref{parametrisedcurves}, respectively).

Note that we cannot apply for higher dimensions the proof of \eqref{eq:unsolved} for three dimensions that we are providing, as a crucial part of it is that the number of critical lines of an algebraic hypersurface in $\R^3$ is bounded (details in section \ref{section2.2}), a fact which we do not know if is true in higher dimensions.

We clarify here that, in whatever follows, any expression of the form $A \lesssim B$ or $A = O(B)$ means that there exists a non-negative constant $M$, depending only on the dimension, such that $A \leq M \cdot B$, while any expression of the form $A\lesssim_{b_1,...,b_m} B$ means that there exists a non-negative constant $M_{b_1,...,b_m}$, depending only on the dimension and $b_1$, ..., $b_m$, such that $A \lesssim M_{b_1,...,b_m}\cdot B$. In addition, any expression of the
form $A \gtrsim B$ or $A \gtrsim_{b_1,...,b_m} B$ means that $B \lesssim A$ or $B \lesssim_{b_1,...,b_m} A$, respectively. Finally, any expression of the form $A \sim B$ means that $A \lesssim B$ and $A \gtrsim B$, while expression of the form $A \sim_{b_1,...,b_m} B$ means that $A \lesssim_{b_1,...,b_m} B$ and $A \gtrsim_{b_1,...,b_m} B$. 

And now, before continuing with the proof, we present certain facts and tools which will prove useful to our goal.

\textbf{Acknowledgements.} I am deeply grateful to my supervisor, Anthony Carbery, for introducing me with the topic dealt with in this paper -- a question that was his. I also thank him for his help and support. I am also very grateful to my fellow PhD student Jes\'{u}s Martinez-Garcia, for carefully reviewing my thoughts on algebraic geometry, as well as for numerous long discussions on this field. Finally, I would like to thank S\'{a}ndor Kov\'{a}cs and Lucas Kaufmann who, by answering my questions on \url{www.mathoverflow.net}, directed me to the areas of real algebraic geometry appropriate for establishing the geometric background of the paper.

\section{Background}

\subsection{The Guth-Katz polynomial method}

As we have already mentioned, this technique will be the basic tool for our proof. The method can be applied in $\R^n$, for all $n \in \N$, and results in a decomposition of $\R^n$ by the zero set of a polynomial. All the details are fully explained in \cite{Guth_Katz_2010}, but we are presenting here the basic result and the theorems leading to it.

We start with a result of Stone and Tukey, known as the polynomial ham sandwich theorem (that is, in fact, a consequence of the Borsuk-Ulam theorem). In particular, we say that the zero set of a polynomial $p \in \R[x_1,...,x_n]$ bisects a set $U \subset \R^n$ of finite, positive volume, when the sets $U \cap \{p>0\}$ and $U \cap \{p<0\}$ have the same volume.

\begin{theorem}\emph{\textbf{(Stone, Tukey, \cite{MR0007036})}}\label{2.1.1} Let $d \in \N^*$, and $U_1,$ ..., $U_M$ be Lebesgue-measurable subsets of $\R^n$ of finite, positive volume, where $M=\binom{d+n}{n}-1$. Then, there exists a non-zero polynomial in $\R[x_1,...,x_n]$, of degree $\leq d$, whose zero set bisects each $U_i$.\end{theorem}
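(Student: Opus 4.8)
\emph{Proof proposal.} The plan is to lift the statement into $\R^M$, where it reduces to the classical affine ham sandwich theorem for hyperplanes, which in turn is a standard consequence of the Borsuk--Ulam theorem. Let $\alpha=(\alpha_1,\dots,\alpha_n)$ range over the multi-indices with $1\le|\alpha|\le d$; there are exactly $\binom{d+n}{n}-1=M$ of them. I would define $V\colon\R^n\to\R^M$ by $V(x)=(x^\alpha)_{1\le|\alpha|\le d}$, where $x^\alpha=x_1^{\alpha_1}\cdots x_n^{\alpha_n}$. The point of this choice is a dictionary: an affine hyperplane $\{y\in\R^M:a_0+\sum_\alpha a_\alpha y_\alpha=0\}$ of $\R^M$ with $(a_\alpha)_{|\alpha|\ge1}\neq 0$ is exactly the zero set, read through $V$, of the polynomial $p(x)=a_0+\sum_{1\le|\alpha|\le d}a_\alpha x^\alpha$, which is a non-zero element of $\R[x_1,\dots,x_n]$ of degree $\le d$; moreover $\{p>0\}=V^{-1}(H^+)$ and $\{p<0\}=V^{-1}(H^-)$, where $H^\pm$ are the two open half-spaces cut out by that hyperplane. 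So it suffices to produce one affine hyperplane in $\R^M$ that simultaneously bisects the images of the sets $U_i$.

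Next, I would push each set forward: let $\mu_i$ be the Borel measure on $\R^M$ given by $\mu_i(A)=|U_i\cap V^{-1}(A)|$, so that $\mu_i(\R^M)=|U_i|\in(0,\infty)$. The key observation is that $\mu_i$ assigns zero mass to every affine hyperplane of $\R^M$: by the dictionary above, the $V$-preimage of such a hyperplane is the zero set in $\R^n$ of a non-zero polynomial of degree $\le d$, hence a Lebesgue-null set. Now apply the affine ham sandwich theorem in $\R^M$: given any $M$ finite Borel measures on $\R^M$, there is an affine hyperplane $H$ whose two open half-spaces each carry exactly half of every measure. Its proof is the usual Borsuk--Ulam argument — parametrise oriented closed half-spaces by $a=(a_0,\dots,a_M)\in S^M$ (with $H^+_a=\{y:a_0+\sum_j a_jy_j\ge 0\}$), note that $a\mapsto\mu_i(H^+_a)$ is continuous precisely because each $\mu_i$ gives no mass to the bounding hyperplanes, so dominated convergence applies, and apply Borsuk--Ulam to the odd continuous map $a\mapsto(\mu_i(H^+_a)-\mu_i(H^+_{-a}))_{i=1}^{M}$ to obtain $a$ with $\mu_i(H^+_a)=\mu_i(H^+_{-a})$ for all $i$; together with $\mu_i(\partial H^+_a)=0$ this forces each half-space to carry half of $\mu_i$. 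Since every $\mu_i$ has positive total mass, $a$ cannot be either of the poles $(\pm 1,0,\dots,0)$, so $a$ genuinely determines an affine hyperplane of $\R^M$.

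Finally I would translate back: the hyperplane so produced has $(a_\alpha)_{|\alpha|\ge1}\neq 0$, hence the associated polynomial $p$ is non-zero of degree between $1$ and $d$, and since $\mu_i$ puts no mass on $H$ we get $|U_i\cap\{p>0\}|=\mu_i(H^+)=\tfrac12|U_i|=\mu_i(H^-)=|U_i\cap\{p<0\}|$ for each $i$, i.e.\ the zero set of $p$ bisects $U_i$. The only points requiring care are the continuity of $a\mapsto\mu_i(H^+_a)$ on $S^M$ (needed to feed Borsuk--Ulam) and the exclusion of the degenerate poles; both are handled by the zero-mass-on-hyperplanes observation together with positivity of the volumes, so the real engine of the argument is the Borsuk--Ulam theorem itself, which I would simply cite. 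I do not expect any serious obstacle beyond these routine measure-theoretic checks.
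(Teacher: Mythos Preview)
The paper does not give its own proof of this theorem; it merely states it as a result of Stone and Tukey and remarks parenthetically that it is ``in fact, a consequence of the Borsuk--Ulam theorem.'' Your proposal is correct and is precisely the standard argument the paper alludes to: lift via the Veronese map to $\R^M$, push the volumes forward to measures that vanish on hyperplanes, and apply Borsuk--Ulam to bisect.
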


In analogy to this, if $S$ is a finite set of points in $\R^n$, we say that the zero set of a polynomial $p \in \R[x_1,...,x_n]$ bisects $S$ if the sets $S \cap \{p>0\}$ and $S \cap \{p<0\}$ each contain at most half of the points of $S$. Now, Guth and Katz, using the above theorem, proved the following.

\begin{corollary}\emph{\textbf{(Guth, Katz, \cite[Corollary 4.4]{Guth_Katz_2010})}}\label{2.1.2} Let $d \in \N^*$, and $S_1,$ ..., $S_M$ be disjoint, finite sets of points in $\R^n$, where $M=\binom{d+n}{n}-1$. Then, there exists a non-zero polynomial in $\R[x_1,...,x_n]$, of degree $\leq d$, whose zero set bisects each $S_i$. \end{corollary}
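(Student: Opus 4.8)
The plan is to derive the discrete statement from the continuous polynomial ham sandwich theorem (Theorem~\ref{2.1.1}) by thickening the points to small balls and then letting the radius tend to zero. Fix $\varepsilon>0$ small enough that the balls $B(x,\varepsilon)$ with $x\in S_1\cup\cdots\cup S_M$ are pairwise disjoint, and set $U_i^\varepsilon:=\bigcup_{x\in S_i}B(x,\varepsilon)$ for each $i$. Each $U_i^\varepsilon$ is a finite disjoint union of balls, hence Lebesgue-measurable of finite, positive volume, so Theorem~\ref{2.1.1} yields a non-zero polynomial $p_\varepsilon\in\R[x_1,\ldots,x_n]$ of degree $\leq d$ whose zero set bisects every $U_i^\varepsilon$; since $\{p_\varepsilon=0\}$ has measure zero, this gives $\mathrm{vol}\big(U_i^\varepsilon\cap\{p_\varepsilon>0\}\big)=\mathrm{vol}\big(U_i^\varepsilon\cap\{p_\varepsilon<0\}\big)=\tfrac12\,\mathrm{vol}(U_i^\varepsilon)$ for all $i$.

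Next I would pass to the limit. Rescaling $p_\varepsilon$ by the Euclidean norm of its coefficient vector (which affects neither the zero set nor, up to an irrelevant global sign, the sign pattern), I may assume this vector lies on the unit sphere of the finite-dimensional space of polynomials of degree $\leq d$; by compactness I can then choose a sequence $\varepsilon_k\to 0$ along which the coefficient vectors converge, and the limit is a non-zero polynomial $p$ of degree $\leq d$ with $p_{\varepsilon_k}\to p$ uniformly on compact subsets of $\R^n$.

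It remains to check that the zero set of $p$ bisects each $S_i$ in the sense of the statement. Suppose instead that, say, $|S_i\cap\{p>0\}|>\tfrac12|S_i|$ for some $i$. For each of the finitely many points $x\in S_i\cap\{p>0\}$, continuity gives $\rho_x,\delta_x>0$ with $p>\delta_x$ on $B(x,\rho_x)$, so once $k$ is large enough (uniformly over this finite set) we have $\varepsilon_k<\rho_x$ and $\|p_{\varepsilon_k}-p\|_{L^\infty(B(x,\rho_x))}<\delta_x$, whence $p_{\varepsilon_k}>0$ on all of $B(x,\varepsilon_k)$. Adding up the volumes of these disjoint balls gives $\mathrm{vol}\big(U_i^{\varepsilon_k}\cap\{p_{\varepsilon_k}>0\}\big)\geq |S_i\cap\{p>0\}|\cdot\mathrm{vol}(B(0,\varepsilon_k))>\tfrac12|S_i|\cdot\mathrm{vol}(B(0,\varepsilon_k))=\tfrac12\,\mathrm{vol}(U_i^{\varepsilon_k})$, contradicting the volume bisection established above. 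The case $\{p<0\}$ is identical, so $p$ works.

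I expect the only delicate point to be this limiting argument: one has to ensure that the volume-bisection property persists under the limit and converts into a cardinality statement, the subtlety being that points of $S_i$ which happen to lie on $\{p=0\}$ count for neither side -- which is precisely why the conclusion asserts ``at most half'' on each side rather than ``exactly half''. The remaining ingredients (disjointness of the balls, measurability, compactness of the coefficient sphere, uniform convergence of bounded-degree polynomials on compacta) are routine.
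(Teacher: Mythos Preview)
Your proof is correct and is precisely the standard limiting argument from the continuous Stone--Tukey theorem that the paper alludes to: the paper does not give its own proof of Corollary~\ref{2.1.2} but states that Guth and Katz proved it ``using the above theorem'' (i.e.\ Theorem~\ref{2.1.1}), and your approach---thickening points to disjoint balls, applying Theorem~\ref{2.1.1}, normalising the coefficient vector to the unit sphere, extracting a convergent subsequence by compactness, and showing by a volume-counting contradiction that the limit polynomial bisects each $S_i$---is exactly the argument in \cite[Corollary~4.4]{Guth_Katz_2010}. Your handling of the subtlety about points of $S_i$ landing on $\{p=0\}$ is also right.
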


Another proof of this appears in \cite{KMS}, using \cite{MR1988723}.

The Guth-Katz polynomial method consists of successive applications of this last corollary. We now state the result of the application of the method, while its proof is the method itself.

\begin{theorem}\emph{\textbf{(Guth, Katz, \cite[Theorem 4.1]{Guth_Katz_2010})}}\label{2.1.3} Let $\mathfrak{G}$ be a finite set of $S$ points in $\R^n$, and $d>1$. Then, there exists a non-zero polynomial $p \in \R[x_1,...,x_n]$, of degree $\leq d$, whose zero set decomposes $\R^n$ in $\sim d^n$ cells, each of which contains $\lesssim S/d^n$ points of $\mathfrak{G}$.\end{theorem}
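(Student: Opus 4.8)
The plan is to produce $p$ as a product of polynomials obtained from repeated application of Corollary \ref{2.1.2}, doubling at each step the number of point sets that are bisected simultaneously. Throughout the iteration I would track two quantities: the number of point sets currently in play (this controls, via Corollary \ref{2.1.2}, the degree of the next factor) and the largest number of points of $\mathfrak{G}$ contained in any one of them (this is halved at each step).

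In detail: set $\mathfrak{G}_1 := \{\mathfrak{G}\}$, the family consisting of the single set $\mathfrak{G}$, which has $\leq S$ points. Suppose inductively that after $j-1$ steps we have a family $\mathfrak{G}_j$ of at most $2^{j-1}$ pairwise disjoint finite subsets of $\mathfrak{G}$, each with at most $S/2^{j-1}$ points. Since $\binom{d_j+n}{n} \geq d_j^n/n!$, we may choose an integer $d_j \lesssim_n 2^{j/n}$ with $\binom{d_j+n}{n}-1 \geq 2^{j-1}$, so Corollary \ref{2.1.2} applied to $\mathfrak{G}_j$ yields a non-zero $p_j \in \R[x_1,\dots,x_n]$ of degree $\leq d_j \lesssim_n 2^{j/n}$ whose zero set bisects every member of $\mathfrak{G}_j$. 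Replacing each $S_i \in \mathfrak{G}_j$ by the two sets $S_i \cap \{p_j>0\}$ and $S_i \cap \{p_j<0\}$ (the points in $S_i \cap \{p_j=0\}$ are simply discarded) produces a family $\mathfrak{G}_{j+1}$ of at most $2^j$ pairwise disjoint subsets of $\mathfrak{G}$, each with at most $S/2^j$ points.

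Now stop after $J$ steps, where $J$ is the least positive integer with $2^{J/n} \geq d$; this exists because $d>1$, and it gives $2^J \sim_n d^n$. Put $p := p_1 \cdots p_J$. Then $\deg p = \sum_{j=1}^J \deg p_j \lesssim_n \sum_{j=1}^J 2^{j/n} \lesssim_n 2^{J/n} \sim_n d$, the geometric sum being dominated, for each fixed $n$, by its last term. On the complement $\R^n \setminus Z(p) = \bigcap_{j=1}^J \{p_j \neq 0\}$ each $p_j$ has constant sign on every connected component, so each connected component of $\R^n \setminus Z(p)$ lies in one of the at most $2^J$ sign-pattern regions $\{\operatorname{sign} p_1 = \varepsilon_1,\dots,\operatorname{sign} p_J = \varepsilon_J\}$, whose intersections with $\mathfrak{G}$ are exactly the members of $\mathfrak{G}_{J+1}$ and hence contain at most $S/2^J \lesssim_n S/d^n$ points each. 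Taking the cells to be the connected components of $\R^n \setminus Z(p)$, their number is $\lesssim_n (\deg p)^n \lesssim_n d^n$ by the classical Oleinik--Petrovsky--Milnor--Thom bound on the number of connected components of the complement of a real hypersurface, and each contains $\lesssim_n S/d^n$ points of $\mathfrak{G}$; absorbing the $n$-dependent constants gives the statement. (If one prefers, one may instead take the cells to be the nonempty sign-pattern regions, of which there are $\leq 2^J \sim_n d^n$, avoiding the Milnor--Thom input.)

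The one point that really needs care is the degree accounting: each step forces $\deg p_j$ to grow like $2^{j/n}$, and the whole argument hinges on the fact that $\sum_{j \leq J} 2^{j/n}$ is a geometric series with ratio $2^{1/n}>1$, hence comparable for each fixed $n$ to its largest term $2^{J/n} \sim d$; had the per-step degree grown even slightly faster, the total degree could exceed $d$. The other ingredient, the bound on the number of connected components of $\R^n \setminus Z(p)$, is standard real algebraic geometry. Everything else is routine bookkeeping of the repeated halving of the point counts.
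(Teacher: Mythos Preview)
Your proof is correct and follows essentially the same route as the paper: iterate Corollary \ref{2.1.2} to bisect $2^{j-1}$ sets at step $j$ with a polynomial of degree $\lesssim 2^{j/n}$, take $p$ to be the product, and control the total degree via the geometric series $\sum_{j\leq J}2^{j/n}$. The only cosmetic differences are that the paper takes the cells to be the $2^J$ sign-pattern regions directly (your parenthetical alternative) rather than invoking Milnor--Thom, and fixes $J$ by requiring the accumulated degree to stay $\leq d$ rather than by the condition $2^{J/n}\geq d$; both choices differ only in the bookkeeping of $n$-dependent constants.
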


\begin{proof}We find polynomials $p_1,$ ..., $p_J \in \R[x_1,...,x_n]$, in the following way.

By Corollary \ref{2.1.2} applied to the finite set of points $\mathfrak{G}$, there exists a non-zero polynomial $p_1 \in \R[x_1,...,x_n]$, of degree $\lesssim 1^{1/n}$, whose zero set $Z_1$ bisects $\mathfrak{G}$. Thus, $\R^n\setminus Z_1$ consists of $2^1$ disjoint cells (the cell $\{p_1>0\}$ and the cell $\{p_1<0\}$), each of which contains $\lesssim S/2^1$ points of $\mathfrak{G}$.

By Corollary \ref{2.1.2} applied to the disjoint, finite sets of points $\mathfrak{G} \cap \{p_1>0\}$, $\mathfrak{G} \cap \{p_1<0\}$, there exists a non-zero polynomial $p_2 \in \R[x_1,...,x_n]$, of degree $\lesssim 2^{1/n}$, whose zero set $Z_2$ bisects $\mathfrak{G} \cap \{p_1>0\}$ and $\mathfrak{G} \cap \{p_1<0\}$. Thus, $\R^n\setminus (Z_1 \cup Z_2)$ consists of $2^2$ disjoint cells (the cells $\{p_1>0\}\cap\{p_2>0\}, \{p_1>0\}\cap\{p_2<0\}, \{p_1<0\}\cap\{p_2>0\}$ and $\{p_1<0\}\cap\{p_2<0\}$), each of which contains $\lesssim S/2^2$ points of $\mathfrak{G}$.

We continue in a similar way; by the end of the $j$-th step, we have produced non-zero polynomials $p_1,$ ..., $p_j$ in $\R[x_1,...,x_n]$, of degrees $\lesssim 2^{(1-1)/n},$ ..., $\lesssim 2^{(j-1)/n}$, respectively, such that  $\R^n \setminus (Z_1 \cup ...\cup Z_j)$ consists of $2^j$ disjoint cells, each of which contains $\lesssim S/2^j$ points of $\mathfrak{G}$.

We stop this procedure at the $J$-th step, where $J$ is such that the polynomial $p:=p_1 \cdots p_J$ has degree $\leq d$ and the number of cells in which $\R^n \setminus (Z_1 \cup...\cup Z_J)$ is decomposed is $\sim d^n$ (in other words, we stop when $2^{(1-1)/n}+2^{(2-1)/n}+...+2^{(J-1)/n} \lesssim d$ and $2^J \sim d^n$, for appropriate constants hiding behind the $\lesssim$ and $\sim$ symbols). The polynomial $p$ has the properties that we want (note that its zero set is the set $Z_1 \cup...\cup Z_J$).

\end{proof}

\subsection{More preliminaries}\label{section2.2}

The great advantage of applying the Guth-Katz polynomial method for decomposing $\R^3$ and, at the same time, a finite set of points $\mathfrak{G}$ in $\R^3$, does not only lie in the fact that it allows us to have a control over the number of points of $\mathfrak{G}$ in the interior of each cell; it lies in the fact that the surface that partitions $\R^3$ is the zero set of a polynomial. This immediately enriches our setting with extra structure, giving us control over many quantities, especially in three dimensions. In particular, the following holds.

\begin{theorem}\emph{\textbf{(Guth, Katz, \cite[Corollary 2.5]{Guth_Katz_2008})}}\label{2.2.1} \emph{(Corollary of B\'ezout's Theorem)} Let $p_1$, $p_2 \in \R[x,y,z]$. If $p_1$, $p_2$ do not have a common factor, then there exist at most $\deg p_1 \cdot \deg p_2$ lines simultaneously contained in the zero set of $p_1$ and the zero set of $p_2$. \end{theorem}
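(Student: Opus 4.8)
The plan is to reduce the statement to a fact about complex algebraic varieties and then invoke B\'ezout's theorem. Writing $Z(p)$ for the zero set of a polynomial $p$, the first step is to pass from $\R$ to $\C$: if $p_1,p_2\in\R[x,y,z]$ have no common factor in $\R[x,y,z]$, they have no common factor in $\C[x,y,z]$ either. This is the standard fact that greatest common divisors are insensitive to field extension; concretely, after disposing of the trivial case where one of the $p_i$ is constant (or lies in $\R[x,y]$, where the claim is immediate), view $p_1,p_2$ as polynomials in $z$ over $\R(x,y)$: coprimality there is equivalent to $\mathrm{Res}_z(p_1,p_2)\neq 0$ in $\R(x,y)$, and this is unaffected by enlarging the coefficient field to $\C(x,y)$, so Gauss's lemma gives coprimality in $\C[x,y,z]$. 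Since every real line in $Z(p_1)\cap Z(p_2)$ complexifies to a complex line in the complex zero sets, and distinct real lines give distinct complex lines, it is enough to prove the bound over $\C$.

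So let $p_1,p_2\in\C[x,y,z]$ be non-constant with no common factor, and set $X=Z(p_1)\cap Z(p_2)\subseteq\C^3$. The structural heart of the argument is that $\dim X\le 1$. Indeed, each irreducible component of $Z(p_1)$ is a hypersurface $Z(q)$ with $q$ irreducible and $q\mid p_1$; were such a component contained in $Z(p_2)$, then $p_2$ would vanish on $Z(q)$, hence $p_2\in(q)$ by the Nullstellensatz, so $q$ would divide both $p_1$ and $p_2$ --- contradicting coprimality. Thus each component $W$ of $Z(p_1)$ meets $Z(p_2)$ in a proper closed subset, whence $\dim(W\cap Z(p_2))\le\dim W-1=1$, and taking the union over the finitely many components gives $\dim X\le 1$. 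Consequently $X$ is a complex algebraic curve (plus possibly finitely many isolated points), so it has only finitely many irreducible components; every line contained in $X$ is one of its one-dimensional components, and distinct lines are distinct components. By B\'ezout's theorem in $\mathbb{P}^3$ applied to the homogenizations of $p_1,p_2$ --- whose intersection cycle has degree $\deg p_1\cdot\deg p_2$ --- the sum of the degrees of the one-dimensional components of $X$ is at most $\deg p_1\cdot\deg p_2$. As each line has degree $1$, there are at most $\deg p_1\cdot\deg p_2$ of them.

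An alternative, more hands-on route avoids projective B\'ezout: knowing already that there are only finitely many common lines, make a generic linear change of coordinates so that none of them is parallel to the $z$-axis and no two lie in a common plane containing the $z$-direction, and then project to the $xy$-plane. Each common line projects to a distinct affine line contained in $Z(\mathrm{Res}_z(p_1,p_2))$, a nonzero plane curve; since a plane curve of degree $D$ contains at most $D$ lines, it remains only to bound $\deg\mathrm{Res}_z(p_1,p_2)$ in terms of $\deg p_1$ and $\deg p_2$. The naive estimate loses a constant factor, so recovering the sharp bound $\deg p_1\cdot\deg p_2$ is cleanest through the homogeneous formulation --- which is why I would present the projective version as the main argument.

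The only real obstacle is technical bookkeeping: making sure coprimality genuinely survives both the extension $\R\subseteq\C$ and (in the second approach) the passage to the resultant, and being careful enough with degrees in whichever B\'ezout-type estimate is used to land on the constant $1$ rather than some larger absolute constant. Everything else --- finiteness of the set of common lines, the identification of a line with a degree-one component of the intersection curve --- is soft.
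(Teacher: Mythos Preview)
Your argument is correct. Note, however, that the paper does not actually prove Theorem~2.2.1: it is quoted as \cite[Corollary~2.5]{Guth_Katz_2008}. The closest thing to a proof in the paper is Lemma~4.1.12, which generalises the statement from lines to arbitrary irreducible complex algebraic curves, and which the author explicitly presents as an extension of the Guth--Katz argument. That proof is your ``alternative, more hands-on route'' rather than your main one: after a generic change of coordinates one slices by a one-parameter family of planes transverse to all the candidate common curves; on each slice the restrictions $f_{|\Pi},g_{|\Pi}$ share more than $\deg f\cdot\deg g$ roots with distinct $y$-coordinates, so $\mathrm{Res}(f_{|\Pi},g_{|\Pi};x)$ vanishes by the degree bound on the resultant; since this holds on an open set of slices, $\mathrm{Res}(f,g;x)\equiv 0$, forcing a common factor. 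The sharp constant comes out for free because the degree bound for the univariate resultant is already $\deg f\cdot\deg g$.

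Your primary approach --- complexify, use the Nullstellensatz to see $\dim(Z(p_1)\cap Z(p_2))\le 1$, and then read off the number of degree-one components from B\'ezout in $\mathbb{P}^3$ --- is a genuinely different route. It is cleaner and more conceptual, and it makes transparent why the result holds for irreducible curves of any degree (one just sums degrees of components), which is exactly what Lemma~4.1.12 needs. The trade-off is that it imports the projective B\'ezout machinery as a black box, whereas the resultant argument is entirely self-contained given the elementary Lemmas on resultants that the paper records. Your remark that recovering the sharp constant via the projected resultant $\mathrm{Res}_z(p_1,p_2)$ is awkward is fair; the paper sidesteps this by slicing rather than projecting, so that the relevant resultant is univariate on each slice and the degree bound $\deg p_1\cdot\deg p_2$ is immediate.
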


An application of this result enables us to bound the number of critical lines of an algebraic hypersurface in $\R^3$.

\textbf{Definition.} \textit{Let $p \in \R[x,y,z]$ be a non-zero polynomial of degree $\leq d$. Let $Z$ be the zero set of $p$.}

\textit{We denote by $p_{sf}$ the square-free polynomial we end up with, after eliminating all the squares appearing in the expression of $p$ as a product of irreducible polynomials in $\R[x,y,z]$.}

\textit{A }critical point\textit{ $x$ of $Z$ is a point of $Z$ for which $\nabla{p_{sf}}(x)=0$. Any other point of $Z$ is called a }regular point\textit{ of $Z$. A line contained in $Z$ is called a }critical line\textit{ if each point of the line is a critical point of $Z$.}

Note that, for any $p \in \R[x,y,z]$, the polynomials $p$ and $p_{sf}$ have the same zero set.

Moreover, if $x$ is a regular point of the zero set $Z$ of a polynomial $p \in \R[x,y,z]$, then, by the implicit function theorem, $Z$ is a manifold locally around $x$ and the tangent space to $Z$ at $x$ is well-defined; it is, in fact, the plane perpendicular to $\nabla{p_{sf}}(x)$ that passes through $x$.

An immediate corollary of Theorem \ref{2.2.1} is the following.

\begin{proposition}\emph{\textbf{(Guth, Katz, \cite[Proposition 3.1]{Guth_Katz_2008})}}\label{2.2.3} Let $p \in \R[x,y,z]$ be a non-zero polynomial of degree $\leq d$. Let $Z$ be the zero set of $p$. Then, $Z$ contains at most $d^2$ critical lines. \end{proposition}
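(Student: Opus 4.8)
The plan is to exhibit every critical line as a line contained in the common zero set of two polynomials of controlled degree and with no common factor, so that Theorem \ref{2.2.1} can be applied. Since $Z$ is also the zero set of $p_{sf}$ and critical lines are defined through $p_{sf}$, I may work with $p_{sf}$ throughout; if $p$ (hence $p_{sf}$) is constant there is nothing to prove, so assume $\deg p_{sf}\geq 1$. By definition, along a critical line $\ell\subset Z$ all three partial derivatives $\partial_x p_{sf},\partial_y p_{sf},\partial_z p_{sf}$ vanish identically, and hence, for every $v=(\lambda,\mu,\nu)\in\R^3$, the directional derivative $D_v p_{sf}:=\lambda\,\partial_x p_{sf}+\mu\,\partial_y p_{sf}+\nu\,\partial_z p_{sf}$ also vanishes on $\ell$. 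Thus $\ell\subset Z(p_{sf})\cap Z(D_v p_{sf})$ for every $v$, and since $\deg D_v p_{sf}\leq\deg p_{sf}-1\leq d-1$, Theorem \ref{2.2.1} will bound the number of such lines by $\deg p_{sf}\cdot\deg D_v p_{sf}\leq d(d-1)\leq d^2$ — \emph{provided} we can choose $v$ so that $p_{sf}$ and $D_v p_{sf}$ have no common factor.

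The anticipated main obstacle is precisely this last point: one cannot simply take $v$ to be a coordinate vector, because $p_{sf}$ may share a factor with each individual $\partial_i p_{sf}$ (for instance $p_{sf}=xyz$). The remedy uses square-freeness. First, a standard characteristic-zero computation shows that a square-free $q\in\R[x,y,z]$ has $\gcd(q,\partial_x q,\partial_y q,\partial_z q)=1$: if an irreducible $r$ divided $q$ and all three partials, then writing $q=rm$ with $r\nmid m$ one gets $r\mid(\partial_i r)\,m$, hence $r\mid\partial_i r$ (as $\gcd(r,m)=1$), hence $\partial_i r=0$ for each $i$ by degree considerations, forcing $r$ to be constant, a contradiction. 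Consequently, for each irreducible factor $r$ of $p_{sf}$, the set $V_r:=\{v\in\R^3:\ r\mid D_v p_{sf}\}$ is a linear subspace of $\R^3$ (the kernel of the $\R$-linear map $v\mapsto D_v p_{sf}\bmod r$) which is \emph{proper}, since otherwise $r$ would divide all three partials. As $p_{sf}$ has only finitely many irreducible factors, $\bigcup_r V_r$ is a finite union of proper subspaces and so is not all of $\R^3$; choosing $v$ outside it, no irreducible factor of $p_{sf}$ divides $D_v p_{sf}$, i.e.\ $\gcd(p_{sf},D_v p_{sf})=1$ (and in particular $D_v p_{sf}\not\equiv 0$).

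With such a $v$ fixed, Theorem \ref{2.2.1} applies to $p_{sf}$ and $D_v p_{sf}$ and bounds the number of lines lying in both zero sets by $\deg p_{sf}\cdot\deg D_v p_{sf}\leq d\cdot d=d^2$. Since every critical line of $Z$ is one of these lines, $Z$ contains at most $d^2$ critical lines, which is the claim. (Alternatively, one could invoke the fact that the singular locus $\{p_{sf}=\partial_x p_{sf}=\partial_y p_{sf}=\partial_z p_{sf}=0\}$ of the reduced hypersurface $Z(p_{sf})$ has dimension at most $1$ and then bound the number of its one-dimensional linear components; but the directional-derivative argument above has the advantage of yielding the clean constant $d^2$ directly from Theorem \ref{2.2.1}.)
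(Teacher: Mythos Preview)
Your proof is correct, and it takes a somewhat different route from the paper's. The paper factors $p_{sf}=p_1\cdots p_k$ into irreducibles and, for each $p_i$ separately, picks one coordinate partial $g_i\in\{\partial_x p_{sf},\partial_y p_{sf},\partial_z p_{sf}\}$ not divisible by $p_i$; it then observes that any critical line lies in the zero set of some $p_j$ and also of $g_j$, applies Theorem~\ref{2.2.1} to each pair $(p_i,g_i)$, and sums the resulting bounds $\deg p_i\cdot d$ over $i$ to reach $d^2$. You instead pick a \emph{single} generic direction $v$ so that $D_v p_{sf}$ is coprime to $p_{sf}$, and apply Theorem~\ref{2.2.1} once to the pair $(p_{sf},D_v p_{sf})$. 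Both arguments rest on the same square-freeness fact (no irreducible factor of $p_{sf}$ divides all three partials), but your global directional-derivative trick avoids the per-factor bookkeeping and even gives the slightly sharper $d(d-1)$; the paper's version, on the other hand, stays with the three coordinate partials and needs no genericity argument.
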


\begin{proof}Since there are no squares in the expansion of $p_{sf}$ as a product of irreducible polynomials in $\R[x,y,z]$, it follows that $p_{sf}$ and $\nabla{p_{sf}}$ have no common factor. In other words, if $p_{sf}=p_1\cdots p_k$, where, for all $i \in \{1,...,k\}$, $p_i$ is an irreducible polynomial in $\R[x,y,z]$, then, for all $i\in\{1,...,k\}$, there exists some $g_i \in \Big\{\frac{\partial p_{sf}}{\partial x},\frac{\partial p_{sf}}{\partial y},\frac{\partial p_{sf}}{\partial z}\Big\}$, such that $p_i$ is not a factor of $g_i$. 

Now, let $l$ be a critical line of $Z$. It follows that $l$ lies in the zero set of $p_{sf}$, and therefore in the union of the zero sets of $p_1$, ..., $p_k \in \R[x,y,z]$; so, there exists $j \in \{1,...,k\}$, such that $l$ lies in the zero set of $p_j$. However, since $l$ is a critical line of $Z$, it is also contained in the zero set of $\nabla{p_{sf}}$, and thus in the zero set of $g_j$ as well. Therefore, $l$ lies simultaneously in the zero sets of the polynomials $p_j$ and $g_j\in \R[x,y,z]$.

It follows from the above that the number of critical lines of $Z$ is equal to at most $\sum_{i=1,...,k}L_i$, where, for all $i \in \{1,...,k\}$, $L_i$ is the number of lines simultaneously contained in the zero set of $p_i$ and the zero set of $g_i$ in $\R^3$. And since the polynomials $p_i$ and $g_i \in \R[x,y,z]$ do not have a common factor, Theorem \ref{2.2.1} implies that $L_i\leq \deg p_i \cdot \deg g_i \leq \deg p_i \cdot d$, for all $i \in \{1,...,k\}$. Thus, the number of critical lines of $Z$ is $\leq \sum_{i=1,...,k}\deg p_i \cdot d\leq \deg p \cdot d=d^2$.

\end{proof}

And finally:

\textbf{Definition.} \textit{Let $\mathcal{P}$ be a collection of points and $\mathfrak{L}$ a collection of lines in $\R^n$. We say that the pair $(p, l)$, where $p \in \mathcal{P}$ and $l\in \mathfrak{L}$, is an incidence between $\mathcal{P}$ and $\mathfrak{L}$, if $p \in l$. We denote by $I_{\mathcal{P}, \mathfrak{L}}$ the number of all the incidences between $\mathcal{P}$ and $\mathfrak{L}$.}

\begin{theorem}\emph{\textbf{(Szemer\'edi, Trotter, \cite{MR729791})}}\label{2.2.5} Let $\mathfrak{L}$ be a collection of $L$ lines in $\R^2$ and $\mathcal{P}$ a collection of $P$ points in $\R^2$. Then,
\begin{displaymath}I_{\mathcal{P}, \mathfrak{L}} \leq C \cdot ( |P|^{2/3}|L|^{2/3}+|P|+|L|), \end{displaymath}
where $C$ is a constant independent of $\mathfrak{L}$ and $\mathcal{P}$. \end{theorem}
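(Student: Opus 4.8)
The plan is to derive the estimate from the crossing number inequality, following the graph-drawing argument of Sz\'ekely. Given $\mathcal{P}$ and $\mathfrak{L}$, first discard every line containing no point of $\mathcal{P}$ (these contribute no incidences), and form a graph $G$ drawn in $\R^2$ as follows: the vertex set is $\mathcal{P}$, and on each surviving line $\ell$, carrying $m_\ell \ge 1$ points of $\mathcal{P}$, join consecutive such points by $m_\ell - 1$ edges drawn as the corresponding segments of $\ell$. Then $G$ has $V := |P|$ vertices and $E := \sum_{\ell}(m_\ell - 1) \ge I_{\mathcal{P},\mathfrak{L}} - |L|$ edges (we may assume $V \ge 3$, the remaining cases being trivial).

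The point of this construction is a cheap bound on the crossing number of $G$: two edges lying on the same line never cross, while two edges lying on distinct lines $\ell_1 \neq \ell_2$ can cross only at the single point $\ell_1 \cap \ell_2$, which lies in the relative interior of at most one edge on $\ell_1$ and at most one on $\ell_2$; hence this drawing of $G$ exhibits at most $\binom{|L|}{2}$ crossings, so $\operatorname{cr}(G) \le \binom{|L|}{2} < |L|^2$. Now I would invoke the crossing lemma: there is an absolute constant $c>0$ such that any graph with $v$ vertices and $e \ge 4v$ edges has crossing number at least $c\, e^3/v^2$. Applying it to $G$: if $E \ge 4V$ then $|L|^2 > \operatorname{cr}(G) \ge c\, E^3/|P|^2$, so $E \lesssim |P|^{2/3}|L|^{2/3}$, and together with $E \ge I_{\mathcal{P},\mathfrak{L}} - |L|$ this gives $I_{\mathcal{P},\mathfrak{L}} \lesssim |P|^{2/3}|L|^{2/3} + |L|$; if instead $E < 4V$ then directly $I_{\mathcal{P},\mathfrak{L}} \le E + |L| < 4|P| + |L|$. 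In either case $I_{\mathcal{P},\mathfrak{L}} \lesssim |P|^{2/3}|L|^{2/3} + |P| + |L|$, as claimed; no separate analysis of extreme regimes is needed.

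The only ingredient above that is not elementary bookkeeping is the crossing lemma, so that is the step I expect to be the crux if a self-contained account is wanted. I would prove it by amplifying Euler's inequality: for a simple graph with $v \ge 3$ vertices and $e$ edges one has $\operatorname{cr}(H) \ge e - 3v$ (delete one edge per crossing from a nearly optimal drawing to obtain a planar subgraph); then pass to a random induced subgraph keeping each vertex independently with probability $p$, apply this inequality to the induced sub-drawing and take expectations to get $p^4 \operatorname{cr}(H) \ge p^2 e - 3 p v$, i.e. $\operatorname{cr}(H) \ge p^{-2}e - 3p^{-3}v$, and optimise at $p \sim v/e$ (legitimate precisely when $e \gtrsim v$, which is the hypothesis of the lemma).

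Finally, in keeping with the methods of this paper, I would note that Theorem \ref{2.1.3} yields an alternative proof: partition $\R^2$ by the zero set $Z$ of a polynomial of degree $D \sim |P|^{2/3}/|L|^{1/3}$ adapted to $\mathcal{P}$, split the incidences into those lying on $Z$ and those lying in open cells, bound the former via the facts that a plane curve of degree $D$ contains at most $D$ lines (B\'ezout) while any other line meets $Z$ in at most $D$ points, and bound the latter by induction on $|P|$ using that each cell carries $\lesssim |P|/D^2$ points and each line enters at most $D+1$ cells. The delicate point there is closing the induction, handled by the standard device of leaving the inductive constant unspecified until the end and choosing it large relative to the absolute constants produced by a H\"older estimate over the $\sim D^2$ cells.
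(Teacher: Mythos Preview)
Your argument is correct; it is exactly Sz\'ekely's crossing-number proof, and your sketch of the polynomial-partitioning alternative is the Kaplan--Matou\v{s}ek--Sharir approach. Note, however, that the paper does not give its own proof of this theorem: it merely states the result and cites \cite{MR729791}, \cite{MR1464571} (which is Sz\'ekely's paper, i.e.\ the argument you wrote out), and \cite{KMS} (the polynomial-method proof you allude to at the end). So there is nothing to compare against here---your proposal supplies what the paper deliberately omits, and both routes you outline coincide with the two references the paper points to.
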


This theorem first appeared in \cite{MR729791}; other, less complicated proofs have appeared since (see \cite{MR1464571} and \cite{KMS}; in fact, in \cite{KMS} the proof is a consequence of the Gut-Katz polynomial method).

An immediate consequence of the Szemer\'edi-Trotter theorem is the following.

\begin{corollary}\emph{\textbf{(Szemer\'edi, Trotter, \cite{MR729791})}}\label{2.2.6} Let $\mathfrak{L}$ be a collection of $L$ lines in $\R^2$ and $\mathcal{\mathfrak{G}}$ a collection of $S$ points in $\R^2$, such that each of them intersects at least $k$ lines of $\mathfrak{L}$, for $k \geq 2$. Then,
\begin{displaymath}S\leq c_0 \cdot (L^2 k^{-3}+Lk^{-1}), \end{displaymath}
where $c_0$ is a constant independent of $\mathfrak{L}$ and $\mathfrak{G}$. \end{corollary}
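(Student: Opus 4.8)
The plan is to combine the Szemer\'edi--Trotter theorem (Theorem \ref{2.2.5}) with the elementary observation that two distinct lines meet in at most one point, the latter being needed only to handle the regime where $k$ is comparable to the absolute constant appearing in Szemer\'edi--Trotter.

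First I would bound the number of incidences from below: since each of the $S$ points of $\mathfrak{G}$ lies on at least $k$ lines of $\mathfrak{L}$, we have $I_{\mathfrak{G},\mathfrak{L}} \geq kS$. On the other hand, Theorem \ref{2.2.5} gives $I_{\mathfrak{G},\mathfrak{L}} \leq C(S^{2/3}L^{2/3} + S + L)$ for an absolute constant $C$. Combining these,
\[ kS \leq C\bigl(S^{2/3}L^{2/3} + S + L\bigr), \]
so at least one of the three summands on the right is $\geq kS/3$, and I would split into the three corresponding cases. If $CS^{2/3}L^{2/3} \geq kS/3$, then rearranging gives $S^{1/3} \leq 3CL^{2/3}/k$, hence $S \lesssim L^2 k^{-3}$. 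If $CL \geq kS/3$, then $S \lesssim Lk^{-1}$. In either case the asserted bound holds.

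The remaining case is $CS \geq kS/3$, which forces $k \leq 3C$; here the Szemer\'edi--Trotter estimate gives nothing, and instead I would fall back on double counting pairs of lines: two distinct lines of $\mathfrak{L}$ intersect in at most one point, so each point of $\mathfrak{G}$ accounts for at least $\binom{k}{2} \geq 1$ pairs of lines of $\mathfrak{L}$ (using $k \geq 2$), and distinct points account for disjoint sets of such pairs. This yields $S \leq \binom{L}{2}/\binom{k}{2} \lesssim L^2 k^{-2}$, and since $k \leq 3C$ in this case we may write $L^2 k^{-2} = k \cdot L^2 k^{-3} \lesssim L^2 k^{-3}$, so the bound holds here too. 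Putting the cases together gives $S \lesssim L^2 k^{-3} + Lk^{-1}$ with an absolute implied constant, which is the claim.

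The only subtle point --- the ``hard part,'' such as it is --- is the middle term $S$ in the Szemer\'edi--Trotter inequality: it cannot be absorbed into the left-hand side unless $k$ exceeds a fixed multiple of the Szemer\'edi--Trotter constant, which is precisely why the trivial pair-counting estimate must be brought in for small $k$, and why the hypothesis $k \geq 2$ is used. Everything else is a routine rearrangement of inequalities.
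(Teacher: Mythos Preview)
Your argument is correct and is exactly the standard derivation of this corollary from the Szemer\'edi--Trotter incidence bound; the paper itself does not spell out any proof, simply labeling the statement an ``immediate consequence'' of Theorem~\ref{2.2.5}. Your handling of the case $k\leq 3C$ via the pair-counting observation is a clean way to dispose of the middle term, and the rest is precisely what is intended.
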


Note that Theorem \ref{2.2.5} and Corollary \ref{2.2.6} hold not only in $\R^2$, but in $\R^n$ as well, for all $n \in \N$, $n >2$, by projecting $\R^n$ on a generic plane.

We now continue with the proof of Theorem \ref{1.1} (via the proof of Proposition \ref{1.2}).

\section{Proof}\label{3}

We start by making certain observations.

\begin{lemma}\label{3.1} Let $x$ be a joint of multiplicity $N$ for a collection $\mathfrak{L}$ of lines in $\R^3$, such that $x$ lies in $\leq 2k$ of the lines. If, in addition, $x$ is a joint of multiplicity $\leq \frac{N}{2}$ for a subcollection $\mathfrak{L}'$ of the lines, or if it is not a joint at all for the subcollection $\mathfrak{L}'$, then there exist $\geq \frac{N}{1000 \cdot k^2}$ lines of $\mathfrak{L} \setminus \mathfrak{L}'$ passing through $x$. \end{lemma}

\begin{proof} Since the joint $x$ lies in $\leq 2k$ lines of $\mathfrak{L}$, its multiplicity $N$ is $\leq \binom{2k}{3}\leq 8k^3$.

Now, let $A$ be the number of lines of $\mathfrak{L} \setminus \mathfrak{L}'$ that are passing through $x$. We will show that $A \geq \frac{N}{1000 \cdot k^2}$. Indeed, suppose that $A \lneq \frac{N}{1000 \cdot k^2}$. Then,

$N=\Big|\Big\{\{l_1, l_2, l_3\}:$ $l_1$, $l_2$, $l_3$ $\in \mathfrak{L}$ are passing through $x$, and their directions span $\R^3 \Big\}\Big|=$\newline
$\Big|\Big\{\{l_1, l_2, l_3\}:$ $l_1$, $l_2$, $l_3$ $\in \mathfrak{L}'$ are passing through $x$, and their directions span $\R^3 \Big\}\Big|+ \Big|\Big\{\{l_1, l_2, l_3\}:$ $l_1$, $l_2$, $l_3$ $\in \mathfrak{L} \setminus \mathfrak{L}'$ are passing through $x$, and their directions span $\R^3 \Big\}\Big|+\Big|\Big\{\{l_1, l_2, l_3\}:$ the lines $l_1$, $l_2$ $\in \mathfrak{L}'$, $l_3$ $\in \mathfrak{L} \setminus \mathfrak{L}'$ are passing through $x$, and their directions span $\R^3 \Big\}\Big|+\Big|\Big\{\{l_1, l_2, l_3\}:$ the lines $l_1$, $l_2$ $\in \mathfrak{L} \setminus \mathfrak{L}'$, $l_3$ $\in \mathfrak{L}'$ are passing through $x$, and their directions span $\R^3 \Big\}\Big|\leq$\newline
$\leq \frac{N}{2}+\binom{A}{3} + \binom{A}{2} \cdot 2k + \binom{2k}{2} \cdot A\leq$\newline
$\leq \frac{N}{2} + A^3 + A^2 \cdot 2k + (2k)^2 \cdot A\leq$\newline
$\leq \frac{N}{2}+\big(\frac{N}{1000 \cdot k^2}\big)^3 + \big(\frac{N}{1000 \cdot k^2}\big)^2 \cdot 2k + (2k)^2 \cdot \frac{N}{1000 \cdot k^2}\leq$\newline
$\leq \frac{N}{2} + \frac{N}{8} + \frac{N}{8} + \frac{N}{8} \lneq N$ (what we use here is the fact that $N \leq 8k^3$). 

So, we are led to a contradiction, which means that $A \geq \frac{N}{1000 \cdot k^2}$.

\end{proof}

\begin{lemma}\label{3.2} Let $x$ be a joint of multiplicity $N$ for a collection $\mathfrak{L}$ of lines in $\R^3$, such that $x$ lies in $\leq 2k$ of the lines. Then, for every plane containing $x$, there exist $\geq \frac{N}{1000 \cdot k^2}$ lines of $\mathfrak{L}$ passing through $x$, which are not lying in the plane. \end{lemma}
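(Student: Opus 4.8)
The plan is to deduce this directly from Lemma \ref{3.1}, applied to a well-chosen subcollection. Fix a plane $\Pi$ containing $x$, and let $\mathfrak{L}'$ be the subcollection consisting of all those lines of $\mathfrak{L}$ that pass through $x$ and lie inside $\Pi$ (this set may well be empty, in which case the argument still goes through). The key observation is that $x$ is \emph{not} a joint for $\mathfrak{L}'$: the directions of any three lines of $\mathfrak{L}'$ all lie in the two-dimensional linear subspace of $\R^3$ parallel to $\Pi$, so they can never span $\R^3$; hence the multiplicity of $x$ with respect to $\mathfrak{L}'$ is $0$.

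Next I would invoke Lemma \ref{3.1}. By hypothesis $x$ lies in at most $2k$ lines of $\mathfrak{L}$ and has multiplicity $N$, and we have just seen that $x$ is not a joint for the subcollection $\mathfrak{L}'$; therefore Lemma \ref{3.1} produces at least $\frac{N}{1000 \cdot k^2}$ lines of $\mathfrak{L} \setminus \mathfrak{L}'$ passing through $x$. It then only remains to identify these lines: a line of $\mathfrak{L}$ through $x$ belongs to $\mathfrak{L}'$ exactly when it is contained in $\Pi$, so the lines of $\mathfrak{L}\setminus\mathfrak{L}'$ through $x$ are precisely the lines of $\mathfrak{L}$ through $x$ that are not contained in $\Pi$. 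This gives the claimed bound.

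I do not expect any genuine obstacle here; the statement is an essentially immediate corollary of Lemma \ref{3.1}. The only point that warrants a line of care is the remark that lines lying in a common plane cannot by themselves witness a joint, which is what guarantees that $\mathfrak{L}'$ contributes nothing to the multiplicity of $x$ and hence that Lemma \ref{3.1} is applicable with this choice of $\mathfrak{L}'$.
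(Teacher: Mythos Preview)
Your proof is correct and follows essentially the same approach as the paper: define $\mathfrak{L}'$ to be the lines of $\mathfrak{L}$ through $x$ lying in the given plane, observe that $x$ is not a joint for $\mathfrak{L}'$, and apply Lemma~\ref{3.1}. Your version is slightly more explicit in justifying why $x$ cannot be a joint for $\mathfrak{L}'$, but the argument is identical.
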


\begin{proof} Let $\mathfrak{L}'$ be the set of lines in $\mathfrak{L}$ passing through $x$ and lying in some fixed plane. By Lemma \ref{3.1}, we know that there exist $\geq \frac{N}{1000 \cdot k^2}$ lines of $\mathfrak{L} \setminus \mathfrak{L}'$ passing through $x$, and, by the definition of $\mathfrak{L}'$, these lines do not lie in the plane. Therefore, there indeed exist at least $\frac{N}{1000 \cdot k^2}$ lines of $\mathfrak{L}$ passing through $x$ and lying outside the plane.

\end{proof}

We now continue with the proof of Proposition \ref{1.2}. We mention here that our argument will be based to a large extent on the proof of \cite[Theorem 4.7]{Guth_Katz_2010}. The following presentation, though, is self-contained, and it will be made clear whenever the techniques of \cite{Guth_Katz_2010} are being repeated.

\textbf{Proposition 1.2.} \textit{If $\mathfrak{L}$ is a collection of $L$ lines in $\R^3$ and $N$, $k\in \N$, then} \begin{displaymath} |J^{k}_{N}| \cdot N^{1/2} \leq c \cdot \bigg(\frac{L^{3/2}}{k^{1/2}} + \frac{L}{k}\cdot N^{1/2}\bigg),\end{displaymath}
\textit{where $c$ is a constant independent of $\mathfrak{L}$, $N$ and $k$.}

\begin{proof}

The proof will be done by induction on the number of lines of $\mathfrak{L}$. Indeed, let $L \in \N$. For $c$ a (non-negative) constant which will be specified later:

- For any collection of lines in $\R^3$ that consists of 1 line, \begin{displaymath} |J^k_{N}|\cdot N^{1/2} \leq c \cdot \bigg(\frac{1^{3/2}}{k^{1/2}} + \frac{1}{k}\cdot N^{1/2}\bigg), \; \forall \; N, \;k \in \N\end{displaymath} (this is obvious, in fact, for any $c \geq 0$, as in this case $J_{N}=\emptyset$, $\forall \; N \in \N$). 

- We assume that \begin{displaymath} |J_N^{k}| \cdot N^{1/2} \leq c \cdot \bigg(\frac{L'^{3/2}}{k^{1/2}} + \frac{L'}{k}\cdot N^{1/2}\bigg), \; \forall \; N,\; k \in \N,\end{displaymath} for any collection of $L'$ lines in $\R^3$, for any $L' \lneq L$.

- We will now prove that  \begin{equation} |J_N^{k}| \cdot N^{1/2} \leq c \cdot \bigg(\frac{L^{3/2}}{k^{1/2}} + \frac{L}{k}\cdot N^{1/2}\bigg),\; \forall \; N, \; k \in \N \label{eq:final}\end{equation}
for any collection of $L$ lines in $\R^3$.

We emphasise here that this last claim should and will be proved for the same constant $c$ as the one appearing in the first two steps of the induction process, provided that that constant is chosen to be sufficiently large.

Indeed, let $\mathfrak{L}$ be a collection of $L$ lines in $\R^3$, and fix $N$ and $k$ in $\N$. Also, for simplicity, let \begin{displaymath}\mathfrak{G}:=J_{N}^k \end{displaymath} and \begin{displaymath} S:=|J_{N}^k|\end{displaymath} for this collection of lines.

We now proceed in effectively the same way as in the proof of \cite[Theorem 4.7]{Guth_Katz_2010}.

Each point of $\mathfrak{G}$ has at least $k$ lines of $\mathfrak{L}$ passing through it, so, by the Szemer\'edi-Trotter theorem, $S\leq c_0 \cdot (L^2 k^{-3}+Lk^{-1})$, where $c_0$ is a constant independent of $\mathfrak{L}$, $N$ and $k$. Therefore:

If $\frac{S}{2}\leq c_0\cdot  Lk^{-1}$, then $S \cdot N^{1/2} \leq 2c_0 \cdot \frac{L}{k}\cdot N^{1/2}$ (where $2c_0$ is independent of $\mathfrak{L}$, $N$ and $k$). 

Otherwise, $\frac{S}{2}> c_0 \cdot Lk^{-1}$, so, by the Szemer\'edi-Trotter theorem, $\frac{S}{2}< c_0 \cdot L^2 k^{-3}$, which gives $S < 2c_0 \cdot  L^2 k^{-3}$. 

Therefore, $d:=AL^2S^{-1}k^{-3}$ is a quantity $> 1$ whenever $A\geq 2c_0$; we thus choose $A$ to be large enough for this to hold, and we will specify its value later. Now, applying the Guth-Katz polynomial method for this $d>1$ and the finite set of points $\mathfrak{G}$, we deduce that there exists a non-zero polynomial $p$, of degree $\leq d$, whose zero set $Z$ decomposes $\R^3$ in $\lesssim d^3$ cells, each of which contains $\lesssim Sd^{-3}$ points of $\mathfrak{G}$. We can assume that this polynomial is square-free, as eliminating the squares of $p$ does not inflict any change on its zero set. 

If there are $\geq 10^{-8}S$ points of $\mathfrak{G}$ in the union of the interiors of the cells, we are in the cellular case. Otherwise, we are in the algebraic case.

\textbf{Cellular case:} We follow the arguments in the proof of \cite[Lemma 4.8]{Guth_Katz_2010}, to fix $A$ and deduce that $S \cdot N^{1/2} \lesssim L^{3/2}k^{-1/2}$. More particularly:

There are $\gtrsim S$ points of $\mathfrak{G}$ in the union of the interiors of the cells. However, we also know that there exist $\lesssim d^3$ cells in total, each containing $\lesssim Sd^{-3}$ points of $\mathfrak{G}$. Therefore, there exist $\gtrsim d^3$ cells, with $\gtrsim Sd^{-3}$ points of $\mathfrak{G}$ in the interior of each. We call the cells with this property ``full cells". Now:

$\bullet$ If the interior of some full cell contains $\leq k$ points of $\mathfrak{G}$, then $Sd^{-3} \lesssim k$, so $S \lesssim L^{3/2}k^{-2}$,  and since $N \lesssim k^3$, we have that $S \cdot N^{1/2} \lesssim L^{3/2}k^{-1/2}$.

$\bullet$ If the interior of each full cell contains $\geq k$ points of $\mathfrak{G}$, then we will be led to a contradiction by choosing $A$ so large, that there will be too many intersections between the zero set $Z$ of $p$ and the lines of $\mathfrak{L}$ which do not lie in $Z$. Indeed:

Let $\mathfrak{L}_Z$ be the set of lines of $\mathfrak{L}$ which are lying in $Z$. Consider a full cell and let $S_{cell}$ be the number of points of $\mathfrak{G}$ in the interior of the cell, $\mathfrak{L}_{cell}$ the set of lines of $\mathfrak{L}$ that intersect the interior of the cell and $L_{cell}$ the number of these lines. Obviously, $\mathfrak{L}_{cell} \subset \mathfrak{L} \setminus \mathfrak{L}_Z$.

Now, each point of $\mathfrak{G}$ has at least $k$ lines of $\mathfrak{L}$ passing through it, therefore each point of $\mathfrak{G}$ lying in the interior of the cell has at least $k$ lines of $\mathfrak{L}_{cell}$ passing through it. Thus, since $S_{cell} \geq k$, we get that $ L_{cell} \geq k + (k-1) + (k-2) +...+1 \gtrsim k^2$, so 
\begin{displaymath} L^2_{cell}k^{-3} \gtrsim L_{cell} k^{-1}.\end{displaymath} 
But $k \geq 3$, so, by the Szemer\'edi-Trotter theorem,
\begin{displaymath} S_{cell} \lesssim L^2_{cell}k^{-3} + L_{cell} k^{-1}.\end{displaymath}
Therefore, $S_{cell} \lesssim L^2_{cell}k^{-3}$, so, since we are working in a full cell, $Sd^{-3} \lesssim L^2_{cell}k^{-3}$, and rearranging we see that 
\begin{displaymath} L_{cell} \gtrsim S^{1/2}d^{-3/2}k^{3/2}.\end{displaymath}

But each of the lines of $\mathfrak{L}_{cell}$ intersects the boundary of the cell at at least one point $x$, with the property that the induced topology from $\R^3$ to the intersection of the line with the closure of the cell contains an open neighbourhood of $x$; therefore, there are $ \gtrsim S^{1/2}d^{-3/2}k^{3/2}$ incidences of this form between $\mathfrak{L}_{cell}$ and the boundary of the cell (essentially, if a line $l$ intersects the interior of a cell, we can choose one arbitrary point of the intersection of the line with the interior of the cell and move along the line starting from that point until we reach the boundary of the cell for the first time; if $p$ is the point of the boundary that we reach through this procedure,
then the pair $(p, l)$ can be the incidence between the line and the boundary of the particular cell that we take into account; we do not count incidences between this line and the boundary of the particular cell, with the property that locally around the intersection point the line lies outside the cell). 

On the other hand, if $x$ is a point of $Z$ which belongs to a line intersecting the interior of a cell, such that the induced topology from $\R^3$ to the intersection of the line with the closure of the cell contains an open neighbourhood of $x$,  then there exists at most one other cell whose interior is also intersected by the line and whose boundary contains $x$, such that the induced topology from $\R^3$ to the intersection of the line with the closure of that cell contains an open neighbourhood of $x$. This, in fact, is the reason why we only considered a particular type of incidences. More particularly, we are not, in general, able to bound nicely the number of all the cells whose boundaries all contain a point $x$ and whose interiors are all intersected by a line $l$ containing $x$, as the line could enter the interior of each of the cells only far from the point $x$. We know, however, that there exist at most two cells whose boundaries contain $x$ and such that $l$ lies in both their interiors locally around $x$. And the union of the boundaries of all the cells is the zero set $Z$ of the polynomial $p$. 

So, if $I$ is the number of incidences between $Z$ and $\mathfrak{L} \setminus \mathfrak{L}_Z$, $I_{cell}$ is the number of incidences between $\mathfrak{L}_{cell}$ and the boundary of the cell, and $\mathcal{C}$ is the set of all the full cells (which, in our case, has cardinality $\gtrsim d^3$), then the above imply that
\begin{displaymath} I \gtrsim \sum_{cell \in \mathcal{C}} I_{cell} \gtrsim (S^{1/2}d^{-3/2}k^{3/2})\cdot d^3 \sim S^{1/2}d^{3/2}k^{3/2}.\end{displaymath}
On the other hand, if a line does not lie in the zero set $Z$ of $p$, then it intersects $Z$ at $\leq d$ points. Thus,
\begin{displaymath} I \leq L \cdot d. \end{displaymath}
This means that 
\begin{displaymath} S^{1/2}d^{3/2}k^{3/2} \lesssim L \cdot d, \end{displaymath}
which in turn gives $A\lesssim 1$. In other words, there exists some constant $C$, independent of $\mathfrak{L}$, $N$ and $k$, such that $A \leq C$. By fixing $A$ to be a number larger than $C$ (and of course $\geq 2c_0$, so that $d> 1$), we have a contradiction.

Therefore, in the cellular case there exists some constant $c_1$, independent of $\mathfrak{L}$, $N$ and $k$, such that  \begin{displaymath} S \cdot N^{1/2} \leq c_1 \cdot \frac{L^{3/2}}{k^{1/2}}.\end{displaymath}

\textbf{Algebraic case:} Let $\mathfrak{G}_1$ denote the set of points in $\mathfrak{G}$ which lie in $Z$. Here, $|\mathfrak{G}_1|>(1-10^{-8})S$. We now analyse the situation.

Since each point of $\mathfrak{G}_1$ intersects at least $k$ lines of $\mathfrak{L}$, \begin{displaymath} I_{\mathfrak{G}_1,\mathfrak{L}} > (1-10^{-8})Sk. \end{displaymath}
Now, let $\mathfrak{L}'$ be the set of lines in $\mathfrak{L}$ each of which contains $\geq \frac{1}{100}SkL^{-1}$ points of $\mathfrak{G}_1$. Each line of $\mathfrak{L} \setminus \mathfrak{L}'$ intersects fewer than $\frac{1}{100}SkL^{-1}$ points of $\mathfrak{G}_1$, thus
\begin{displaymath} I_{\mathfrak{G}_1,\mathfrak{L}\setminus \mathfrak{L}'} \leq |\mathfrak{L}\setminus \mathfrak{L}'| \cdot \frac{Sk}{100L} \leq \frac{1}{100}Sk.\end{displaymath}
Therefore, since $I_{\mathfrak{G}_1, \mathfrak{L}}=I_{\mathfrak{G}_1,\mathfrak{L} \setminus \mathfrak{L}'}+I_{\mathfrak{G}_1,\mathfrak{L}'}$, it follows that 
\begin{displaymath}I_{\mathfrak{G}_1,\mathfrak{L}'}> (1-10^{-8}-10^{-2})Sk. \end{displaymath}
Thus, there are $\gtrsim Sk$ incidences between $\mathfrak{G}_1$ and $\mathfrak{L}'$; this, combined with the fact that there exist $\leq S$ points of $\mathfrak{G}$ in total, each intersecting $\leq 2k$ lines of $\mathfrak{L}$, implies that there exist $\gtrsim S$ points of $\mathfrak{G}_1$, each intersecting $\gtrsim k$ lines of $\mathfrak{L}'$. 

Let us now take a moment to look for a practical meaning of this: $\sim S$ of our initial points each lie in $\sim k$ lines of $\mathfrak{L}'$, which is a subset of our initial set of lines $\mathfrak{L}$. Thus, if $\mathfrak{L}'$ is a strict subset of $\mathfrak{L}$, and if many of these points are joints for $\mathfrak{L}'$ with multiplicity $\sim N$, we can use our induction hypothesis for $\mathfrak{L}'$ and solve the problem if $|\mathfrak{L}'|$ is significantly smaller than $L$; however, before being able to tackle the problem in the rest of the cases, we need to extract more information. 

To that end, we will need to use appropriate, explicit constants now hiding behing the $\gtrsim$ symbols, which we therefore go ahead and find.

More particularly, let $\mathfrak{G}'$ be the set of points of $\mathfrak{G}_1$ each of which intersects $\geq \frac{1-10^{-8}-10^{-2}}{2}k$ lines of $\mathfrak{L}'$.

Then, \begin{displaymath}I_{\mathfrak{G}_1 \setminus \mathfrak{G}', \mathfrak{L}'} \leq |\mathfrak{G}_1 \setminus \mathfrak{G}'|\cdot  \frac{1-10^{-8}-10^{-2}}{2}k \leq  \frac{1-10^{-8}-10^{-2}}{2}Sk,\end{displaymath}

therefore, since $I_{\mathfrak{G}_1, \mathfrak{L}'}= I_{\mathfrak{G}_1 \setminus \mathfrak{G}', \mathfrak{L}'}+I_{\mathfrak{G}', \mathfrak{L}'}$, it follows that
\begin{displaymath}I_{\mathfrak{G}', \mathfrak{L}'} > \frac{1-10^{-8}-10^{-2}}{2}Sk. \end{displaymath}
And obviously,  $I_{\mathfrak{G}', \mathfrak{L}'} \leq |\mathfrak{G}'|\cdot 2k$. Therefore, $ \frac{1-10^{-8}-10^{-2}}{2}Sk< |\mathfrak{G}'|\cdot 2k$, and thus
\begin{displaymath} |\mathfrak{G}'|\geq \frac{1-10^{-8}-10^{-2}}{4}S; \end{displaymath}
in other words, there exist at least $\frac{1-10^{-8}-10^{-2}}{4}S$ points of $\mathfrak{G}_1$, each intersecting $\geq \frac{1-10^{-8}-10^{-2}}{2}k$ lines of $\mathfrak{L}'$. 

Now, each point of $\mathfrak{G}_1$ lies in $Z$, so it is either a regular or a critical point of $Z$. Let $\mathfrak{G}_{crit}$ be the set of points of $\mathfrak{G}_1$ that are critical points of $Z$, and  $\mathfrak{G}_{reg}$ the set of points of $\mathfrak{G}_1$ that are regular points of $Z$; then, $\mathfrak{G}_1=\mathfrak{G}_{crit} \sqcup \mathfrak{G}_{reg}$.

We are in one of the following two subcases.

\textbf{The regular subcase:} At least $\frac{10^{-8}}{4}S$ points of $\mathfrak{G}_1$ are regular points of $Z$ ($|\mathfrak{G}_{reg}| \geq \frac{10^{-8}}{4}S$).

What we actually need to continue is that $Z$ contains $\gtrsim S$ points of $\mathfrak{G}$ that are regular. Now, if $x \in \mathfrak{G}$ is a regular point of $Z$, there exists a plane through it, containing all those lines through the point that are lying in $Z$ (otherwise, the point would be a critical point of $Z$). And, since $x$ is a joint for $\mathfrak{L}$, of multiplicity $\geq N$, lying in $\leq 2k$ lines of $\mathfrak{L}$, by Lemma \ref{3.2} there exist $\gtrsim \frac{N}{k^2}$ lines of $\mathfrak{L}$ passing through $x$, which are not lying on the plane; this means that these lines are not lying in $Z$, and thus each of them contains $\leq d$ points of $\mathfrak{G}_1$. Therefore, the number of incidences between $\mathfrak{G}_1$ and $\mathfrak{L} \setminus \mathfrak{L}_{Z}$ is $\gtrsim S \cdot \frac{N}{k^2}$, but also $\leq |\mathfrak{L} \setminus \mathfrak{L}_{Z}| \cdot d \leq L \cdot d$. Thus, $S \cdot \frac{N}{k^2} \lesssim L\cdot d$, which implies that $S \cdot N^{1/2} \lesssim L^{3/2}k^{-1/2}$.

Therefore, \begin{displaymath} S \cdot N^{1/2} \leq c_2 \cdot \frac{L^{3/2}}{k^{1/2}}, \end{displaymath} 
for some constant $c_2$ independent of $\mathfrak{L}$, $N$ and $k$.

\textbf{The critical subcase:} Fewer than $\frac{10^{-8}}{4}S$ points of $\mathfrak{G}_1$ are regular points of $Z$ ($|\mathfrak{G}_{reg}| < \frac{10^{-8}}{4}S$). Now, either $|\mathfrak{L'}| \geq \frac{L}{100}$ or $|\mathfrak{L'}| < \frac{L}{100}$.

$\bullet$ \textbf{Suppose that} $\boldsymbol{|\mathfrak{L'}| \geq \frac{L}{100}}$ \textbf{.} 

(The basic arguments for the proof of this case appear in the proof of \cite[Proposition 4.7]{Guth_Katz_2010}.)

We notice that, if $\frac{1}{200}SkL^{-1} \leq d$, then we obtain $S \lesssim L^{3/2}k^{-2}$ by rearranging, so $S \cdot  N^{1/2} \lesssim L^{3/2}k^{-1/2}$ (as $N \lesssim k^3$). 

Therefore, we assume from now on that $\frac{1}{200}SkL^{-1} \geq d+1$. Then, each line of $\mathfrak{L}'$ contains at least $d+1\geq \deg p+1$ points of the zero set $Z$ of $p$ (as $\mathfrak{G}_1$ lies in $Z$), and thus each line of $\mathfrak{L}'$ lies in $Z$.

Now, we know that each line of $\mathfrak{L}'$ contains $\geq \frac{1}{100} SkL^{-1}$ points of $\mathfrak{G}_1$. Therefore, it either contains $\geq \frac{1}{200} SkL^{-1}$ points of $\mathfrak{G}_{crit}$ or $\geq \frac{1}{200} SkL^{-1}$ points of $\mathfrak{G}_{reg}$. But $|\mathfrak{L}'| \geq \frac{L}{100}$, so, if $\mathfrak{L}_{crit}$ is the set of lines in $\mathfrak{L}'$ each containing $\geq \frac{1}{200} SkL^{-1}$ points of $\mathfrak{G}_{crit}$ and $\mathfrak{L}_{reg}$ is the set of lines in $\mathfrak{L}'$ each containing $\geq \frac{1}{200} SkL^{-1}$  points of $\mathfrak{G}_{reg}$, then either $|\mathfrak{L}_{crit}| \geq \frac{L}{200}$ or $|\mathfrak{L}_{reg}|\geq \frac{L}{200}$.

Let us suppose that, in fact, $|\mathfrak{L}_{reg}|\geq \frac{L}{200}$. This means that the incidences between $\mathfrak{L}$ and the points in $\mathfrak{G}$ which are regular points of $Z$ number at least $\frac{L}{200} \cdot \frac{1}{200} SkL^{-1} = \frac{1}{4 \cdot 10^{4}}  Sk$. However, there exist fewer than $\frac{10^{-8}}{4}S$ points of $\mathfrak{G}$ which are regular points of $Z$, and therefore they contribute fewer than $\frac{10^{-8}}{4}S \cdot 2k=\frac{1}{2 \cdot 10^{8}}Sk \lneq \frac{1}{4 \cdot 10^{4}}  Sk$ incidences with $\mathfrak{L}$; so, we are led to a contradiction. Therefore, $|\mathfrak{L}_{reg}|\lneq \frac{L}{200}$.

Thus, $|\mathfrak{L}_{crit}| \geq \frac{L}{200}$. Now, each line of $\mathfrak{L}_{crit}$ contains $\geq \frac{1}{200} SkL^{-1} \gneq d$ critical points of $Z$, i.e. $\gneq d$ points where $p$ and $\nabla{p}$ are zero. However, both $p$ and $\nabla{p}$ have degrees $\leq d$. Therefore, if $l \in \mathfrak{L}_{crit}$, then $p$ and $\nabla{p}$ are zero across the whole line $l$, so each point of $l$ is a critical point of $Z$; in other words, $l$ is a critical line of $Z$. So, the number of critical lines of $Z$ is $\geq |\mathfrak{L}_{crit}| \geq \frac{L}{200}$. On the other hand, the number of critical lines of $Z$ is $\leq d^2$ (Proposition \ref{2.2.3}). Therefore,
\begin{displaymath} \frac{L}{200} \leq d^2, \end{displaymath} which gives $S \lesssim L^{3/2}k^{-3}$ after rearranging. Thus, $S N^{1/2} \lesssim L^{3/2}k^{-3/2}$ ($\lesssim  L^{3/2}k^{-1/2}$).

In other words, \begin{displaymath} S N^{1/2} \leq c_3 \cdot \frac{L^{3/2}}{k^{1/2}}, \end{displaymath} 
for some constant $c_3$ independent of $\mathfrak{L}$, $N$ and $k$.

$\bullet$ \textbf{Suppose that} $\boldsymbol{|\mathfrak{L'}| < \frac{L}{100}}$ \textbf{.}

Since fewer than $\frac{10^{-8}}{4}$ points of $\mathfrak{G}_1$ are regular points of $Z$, the same holds for the subset $\mathfrak{G}'$ of $\mathfrak{G}_1$. So, at least $\frac{1-2 \cdot 10^{-8}-10^{-2}}{4}S$ points of $\mathfrak{G}'$ are critical points of $Z$.

Now, each of the points of $\mathfrak{G}'$ is a joint for $\mathfrak{L}$ with multiplicity in the interval $[N,2N)$, so it is either a joint for $\mathfrak{L}'$ with multiplicity in the interval  $[N/2, 2N)$, or it is a joint for $\mathfrak{L}'$ with multiplicity $< N/2$, or it is not a joint for $\mathfrak{L}'$. Therefore, one of the following two subcases holds.

\textbf{1st subcase:} There exist at least $\frac{1-2 \cdot 10^{-8}-10^{-2}}{8}S$ critical points in $\mathfrak{G}'$ each of which is either a joint for $\mathfrak{L}'$ with multiplicity $<N/2$ or not a joint at all for $\mathfrak{L}'$. Let $\mathfrak{G}_2$ be the set of those points.

By Lemma \ref{3.1}, for each point $x \in \mathfrak{G}_2$ there exist $\geq \frac{N}{1000 \cdot k^2}$ lines of $\mathfrak{L} \setminus \mathfrak{L}'$ passing through $x$. 

Now, let $\mathfrak{L}_3$ be the set of lines in $\mathfrak{L} \setminus \mathfrak{L}'$, such that each of them contains $\leq d$ critical points of $Z$. Then, one of the following two holds.

\textbf{(1)} There exist $\geq \frac{1-2 \cdot 10^{-8}-10^{-2}}{16}S$ points of $\mathfrak{G}_2$ such that each of them has $\geq \frac{N}{2000 \cdot k^2}$ lines of $\mathfrak{L}_3$ passing through it. Then, 
\begin{displaymath}S \cdot \frac{N}{k^2} \lesssim I_{\mathfrak{G}_2, \mathfrak{L}_3} \leq |\mathfrak{L}_3| \cdot d \leq L \cdot d.  \end{displaymath}  
Rearranging, obtain $S \cdot N^{1/2} \lesssim L^{3/2}k^{-1/2}$.

\textbf{(2)} There exist $\geq \frac{1-2 \cdot 10^{-8}-10^{-2}}{16}S$ points of $\mathfrak{G}_2$ such that each of them has $\geq \frac{N}{2000 \cdot k^2}$ lines of $\big(\mathfrak{L} \setminus  \mathfrak{L}'\big) \setminus \mathfrak{L}_3$ passing through it. Each line of $\big(\mathfrak{L} \setminus  \mathfrak{L}'\big) \setminus \mathfrak{L}_3$ contains $<\frac{1}{100}SkL^{-1}$ points of $\mathfrak{G}_1$. Also, it contains $>d$ critical points of $Z$, so it is a critical line. But $Z$ contains $\leq d^2$ critical lines in total (by Proposition \ref{2.2.3}). Therefore,
\begin{displaymath}S \cdot \frac{N}{k^2} \lesssim I_{\mathfrak{G}_2, \big(\mathfrak{L} \setminus  \mathfrak{L}'\big) \setminus \mathfrak{L}_3} \leq d^2 \cdot \frac{1}{100}SkL^{-1}, \end{displaymath}
so $S\cdot  N^{1/2} \lesssim L^{3/2}k^{-1/2}$, by rearranging.

Thus, in this 1st subcase,
\begin{displaymath} S \cdot N^{1/2} \leq c_4 \cdot \frac{L^{3/2}}{k^{1/2}},  \end{displaymath} 
where $c_4$ is a constant independent of $\mathfrak{L}$, $N$ and $k$.

We are now able to define the constant $c$ appearing in our induction process; we let $c:=\max\{2c_0, c_1, c_2, c_3, c_4\}$. Note that, in any case that has been dealt with so far, 
\begin{displaymath} S \cdot N^{1/2} \leq c \cdot \bigg(\frac{L^{3/2}}{k^{1/2}}+\frac{L}{k}\cdot N^{1/2}\bigg),\end{displaymath}
and $c$ is, indeed, an explicit, non-negative constant, independent of $\mathfrak{L}$, $N$ and $k$.

\textbf{2nd subcase:} At least $\frac{1-2 \cdot 10^{-8}-10^{-2}}{8}S$ points of $\mathfrak{G}'$ are joints for $\mathfrak{L}'$ with multiplicity in the interval $[\frac{N}{2},2N)$. Then, either \textbf{(1)} or \textbf{(2)} hold.

\textbf{(1)} At least $\frac{1-2 \cdot 10^{-8}-10^{-2}}{16}S$ points of $\mathfrak{G}'$ are joints for $\mathfrak{L}'$ with multiplicity in the interval $[N, 2N)$. However, each point of $\mathfrak{G}'$ intersects at least $\frac{1-10^{-8}-10^{-2}}{2} k$ and fewer than $2k$ lines of $\mathfrak{L}'$. Therefore, either ($1i$), ($1ii$) or ($1iii$) hold.

($1i$) At least $\frac{1-2 \cdot 10^{-8}-10^{-2}}{48}S$ points of $\mathfrak{G}'$ are joints for $\mathfrak{L}'$ with multiplicity in the interval $[N,2N)$, such that each of them lies in at least $k$ and fewer than $2k$ lines of $\mathfrak{L'}$. Then, since $|\mathfrak{L}' |< \frac{L}{100}\lneq L$, it follows from our induction hypothesis that
\begin{displaymath} \frac{1-2 \cdot 10^{-8}-10^{-2}}{48}S \cdot N^{1/2} \leq c \cdot \bigg( \frac{|\mathfrak{L}'|^{3/2}}{k^{1/2}} + \frac{|\mathfrak{L}'|}{k} \cdot N^{1/2} \bigg) \leq \end{displaymath}
\begin{displaymath}\leq  c \cdot \bigg( \frac{(L/100)^{3/2}}{k^{1/2}} + \frac{(L/100)}{k} \cdot N^{1/2} \bigg).  \end{displaymath}
However, \begin{displaymath} \frac{48}{1-2 \cdot 10^{-8}-10^{-2}} \cdot \frac{1}{100^{3/2}} <1\end{displaymath} and
\begin{displaymath}\frac{48}{1-2 \cdot 10^{-8}-10^{-2}} \cdot \frac{1}{100}<1, \end{displaymath}

therefore \begin{displaymath} S \cdot N^{1/2} \leq  c \cdot \bigg( \frac{L^{3/2}}{k^{1/2}} + \frac{L}{k} \cdot N^{1/2} \bigg).\end{displaymath}

($1ii$) At least $\frac{1-2 \cdot 10^{-8}-10^{-2}}{48}S$ points of $\mathfrak{G}'$ are joints for $\mathfrak{L}'$ with multiplicity in the interval $[N,2N)$, such that each of them lies in at least $\frac{1-10^{-8}-10^{-2}}{2}k$ and fewer than $(1-10^{-8}-10^{-2})k$ lines of $\mathfrak{L'}$. So, since $|\mathfrak{L}' |< \frac{L}{100}\lneq L$, it follows from our induction hypothesis that
\begin{displaymath} \frac{1-2 \cdot 10^{-8}-10^{-2}}{48}S \cdot N^{1/2} \leq 
\end{displaymath}
\begin{displaymath}
\leq c \cdot \Bigg( \frac{|\mathfrak{L}'|^{3/2}}{\big(\frac{1-10^{-8}-10^{-2}}{2}k \big)^{1/2}} + \frac{|\mathfrak{L}'|}{\big(\frac{1-10^{-8}-10^{-2}}{2}k \big)} \cdot N)^{1/2} \Bigg) \leq \end{displaymath}
\begin{displaymath}\leq  c \cdot \Bigg( \frac{(L/100)^{3/2}}{\big(\frac{1-10^{-8}-10^{-2}}{2}k \big)^{1/2}} + \frac{(L/100)}{\big(\frac{1-10^{-8}-10^{-2}}{2}k \big)} \cdot N^{1/2} \Bigg) . \end{displaymath}
However,
\begin{displaymath} \frac{48}{1-2 \cdot 10^{-8}-10^{-2}} \cdot \frac{1}{100^{3/2}} \cdot \frac{2^{1/2}}{(1-10^{-8}-10^{-2})^{1/2}} <1 \end{displaymath} and
\begin{displaymath}\frac{48}{1-2 \cdot 10^{-8}-10^{-2}} \cdot \frac{1}{100}\cdot \frac{2}{1-10^{-8}-10^{-2}}<1, \end{displaymath}
therefore 
\begin{displaymath} S \cdot  N^{1/2} \leq  c \cdot \bigg( \frac{L^{3/2}}{k^{1/2}} + \frac{L}{k} \cdot N^{1/2} \bigg).\end{displaymath}

($1iii$) At least $\frac{1-2 \cdot 10^{-8}-10^{-2}}{48}S$ points of $\mathfrak{G}'$ are joints for $\mathfrak{L}'$ with multiplicity in the interval $[N,2N)$, such that each of them lies in between $(1-10^{-8}-10^{-2})k$ and $2 \cdot (1-10^{-8}-10^{-2})k$ lines of $\mathfrak{L'}$. So, since $|\mathfrak{L}' |< \frac{L}{100}\lneq L$, it follows from our induction hypothesis that
\begin{displaymath} \frac{1-2 \cdot 10^{-8}-10^{-2}}{48}S \cdot N^{1/2} \leq c \cdot \Bigg( \frac{|\mathfrak{L}'|^{3/2}}{\big((1-10^{-8}-10^{-2})k)^{1/2}} +\end{displaymath}
\begin{displaymath}+ \frac{|\mathfrak{L}'|}{(1-10^{-8}-10^{-2})k} \cdot N^{1/2} \Bigg) \leq \end{displaymath}
\begin{displaymath}\leq  c \cdot \Bigg( \frac{(L/100)^{3/2}}{\big((1-10^{-8}-10^{-2})k \big)^{1/2}} + \frac{(L/100)}{(1-10^{-8}-10^{-2})k} \cdot N^{1/2} \Bigg) . \end{displaymath}
However,
\begin{displaymath} \frac{48}{1-2 \cdot 10^{-8}-10^{-2}} \cdot \frac{1}{100^{3/2}} \cdot \frac{1}{(1-10^{-8}-10^{-2})^{1/2}} <1 \end{displaymath} and
\begin{displaymath}\frac{48}{1-2 \cdot 10^{-8}-10^{-2}} \cdot \frac{1}{100}\cdot \frac{1}{1-10^{-8}-10^{-2}}<1, \end{displaymath}
therefore 
\begin{displaymath} S \cdot N^{1/2} \leq  c \cdot \bigg( \frac{L^{3/2}}{k^{1/2}} + \frac{L}{k} \cdot N^{1/2} \bigg).\end{displaymath}

\textbf{(2)} At least $\frac{1-2 \cdot 10^{-8}-10^{-2}}{16}S$ points of $\mathfrak{G}'$ are joints for $\mathfrak{L}'$ with multiplicity in the interval $[\frac{N}{2},N)$. However, each point of $\mathfrak{G}'$ intersects at least $\frac{1-10^{-8}-10^{-2}}{2}\cdot k$ and fewer than $2k$ lines of $\mathfrak{L}'$. Therefore, either ($2i$), ($2ii$) or ($2iii$) hold.

($2i$)  At least $\frac{1-2 \cdot 10^{-8}-10^{-2}}{48}S$ points of $\mathfrak{G}'$ are joints for $\mathfrak{L}'$ with multiplicity in the interval $[\frac{N}{2},N)$, such that each of them lies in at least $k$ and fewer than $2k$ lines of $\mathfrak{L'}$. Then, since $|\mathfrak{L}' |< \frac{L}{100}\lneq L$, it follows from our induction hypothesis that
\begin{displaymath} \frac{1-2 \cdot 10^{-8}-10^{-2}}{48}S \cdot \bigg(\frac{N}{2}\bigg)^{1/2} \leq c \cdot \Bigg( \frac{|\mathfrak{L}'|^{3/2}}{k^{1/2}} + \frac{|\mathfrak{L}'|}{k} \cdot \bigg(\frac{N}{2}\bigg)^{1/2} \Bigg) \leq \end{displaymath}
\begin{displaymath}\leq  c \cdot \Bigg( \frac{(L/100)^{3/2}}{k^{1/2}} + \frac{(L/100)}{k} \cdot \bigg(\frac{N}{2}\bigg)^{1/2} \Bigg).  \end{displaymath}
However, \begin{displaymath} \frac{48}{1-2 \cdot 10^{-8}-10^{-2}} \cdot \frac{1}{100^{3/2}} \cdot 2^{1/2} <1\end{displaymath} and
\begin{displaymath}\frac{48}{1-2 \cdot 10^{-8}-10^{-2}} \cdot \frac{1}{100}<1, \end{displaymath}
therefore 
\begin{displaymath} S \cdot N^{1/2} \leq  c \cdot \bigg( \frac{L^{3/2}}{k^{1/2}} + \frac{L}{k} \cdot N^{1/2} \bigg).\end{displaymath}

($2ii$) At least $\frac{1-2 \cdot 10^{-8}-10^{-2}}{48}S$ points of $\mathfrak{G}'$ are joints for $\mathfrak{L}'$ with multiplicity in the interval $[\frac{N}{2},N)$, such that each of them lies in at least $\frac{1-10^{-8}-10^{-2}}{2}\cdot k$ and fewer than $(1-10^{-8}-10^{-2})k$ lines of $\mathfrak{L'}$. So, since $|\mathfrak{L}' |< \frac{L}{100}\lneq L$, it follows from our induction hypothesis that
\begin{displaymath} \frac{1-2 \cdot 10^{-8}-10^{-2}}{48}S \cdot \bigg(\frac{N}{2}\bigg)^{1/2} \leq 
\end{displaymath}
\begin{displaymath} \leq c \cdot \Bigg( \frac{|\mathfrak{L}'|^{3/2}}{\big(\frac{1-10^{-8}-10^{-2}}{2}k \big)^{1/2}} + \frac{|\mathfrak{L}'|}{\big(\frac{1-10^{-8}-10^{-2}}{2}k \big)} \cdot \bigg(\frac{N}{2}\bigg)^{1/2} \Bigg) \leq \end{displaymath}
\begin{displaymath}\leq  c \cdot \Bigg( \frac{(L/100)^{3/2}}{\big(\frac{1-10^{-8}-10^{-2}}{2}k \big)^{1/2}} + \frac{(L/100)}{\big(\frac{1-10^{-8}-10^{-2}}{2}k \big)} \cdot \bigg(\frac{N}{2}\bigg)^{1/2} \Bigg) . \end{displaymath}
However,
\begin{displaymath} \frac{48}{1-2 \cdot 10^{-8}-10^{-2}} \cdot \frac{1}{100^{3/2}} \cdot \frac{2^{1/2}}{(1-10^{-8}-10^{-2})^{1/2}} \cdot 2^{1/2} <1 \end{displaymath} and
\begin{displaymath}\frac{48}{1-2 \cdot 10^{-8}-10^{-2}} \cdot \frac{1}{100}\cdot \frac{2}{1-10^{-8}-10^{-2}}<1, \end{displaymath}
therefore 
\begin{displaymath} S \cdot N^{1/2} \leq  c \cdot \bigg( \frac{L^{3/2}}{k^{1/2}} + \frac{L}{k} \cdot N^{1/2} \bigg).\end{displaymath}

($2iii$) At least $\frac{1-2 \cdot 10^{-8}-10^{-2}}{48}S$ points of $\mathfrak{G}'$ are joints for $\mathfrak{L}'$ with multiplicity in the interval $[\frac{N}{2},N)$, such that each of them lies in at least $(1-10^{-8}-10^{-2})k$ and fewer than $2 \cdot (1-10^{-8}-10^{-2})k$ lines of $\mathfrak{L'}$. So, since $|\mathfrak{L}' |< \frac{L}{100}\lneq L$, it follows from our induction hypothesis that
\begin{displaymath} \frac{1-2 \cdot 10^{-8}-10^{-2}}{48}S \cdot \bigg(\frac{N}{2}\bigg)^{1/2} \leq c \cdot \Bigg( \frac{|\mathfrak{L}'|^{3/2}}{\big((1-10^{-8}-10^{-2})k)^{1/2}} + \end{displaymath}
\begin{displaymath}+\frac{|\mathfrak{L}'|}{(1-10^{-8}-10^{-2})k} \cdot \bigg(\frac{N}{2}\bigg)^{1/2} \Bigg) \leq \end{displaymath}
\begin{displaymath}\leq  c \cdot \Bigg( \frac{(L/100)^{3/2}}{\big((1-10^{-8}-10^{-2})k \big)^{1/2}} + \frac{(L/100)}{(1-10^{-8}-10^{-2})k} \cdot \bigg(\frac{N}{2}\bigg)^{1/2} \Bigg) . \end{displaymath}
However,
\begin{displaymath} \frac{48}{1-2 \cdot 10^{-8}-10^{-2}} \cdot \frac{1}{100^{3/2}} \cdot \frac{2^{1/2}}{(1-10^{-8}-10^{-2})^{1/2}} <1 \end{displaymath} and
\begin{displaymath}\frac{48}{1-2 \cdot 10^{-8}-10^{-2}} \cdot \frac{1}{100}\cdot \frac{1}{1-10^{-8}-10^{-2}}<1, \end{displaymath}
therefore 
\begin{displaymath} S \cdot N^{1/2} \leq  c \cdot \bigg( \frac{L^{3/2}}{k^{1/2}} + \frac{L}{k} \cdot N^{1/2} \bigg).\end{displaymath}

We have by now exhausted all the possible cases; in each one,

\begin{displaymath} S \cdot N^{1/2} \leq  c \cdot \bigg( \frac{L^{3/2}}{k^{1/2}} + \frac{L}{k} \cdot N^{1/2} \bigg), \end{displaymath}

where $c$ is, by its definition, a constant independent of $\mathfrak{L}$, $N$ and $k$.

Therefore, as $N$ and $k$ were arbitrary, \eqref{eq:final} holds for this collection $\mathfrak{L}$ of lines in $\R^3$. And since $\mathfrak{L}$ was an arbitrary collection of $L$ lines, \eqref{eq:final} holds for any collection $\mathfrak{L}$ of $L$ lines in $\R^3$.

Consequently, the proposition is proved.

\end{proof}

Now, Theorem \ref{1.1} easily follows.

\textbf{Theorem 1.1.} \textit{Let $\mathfrak{L}$ be a collection of $L$ lines in $\R^3$, forming a set of joints $J$. Then,} 
\begin{displaymath} \sum_{x \in J}N(x)^{1/2} \leq c \cdot L^{3/2},\end{displaymath} 
\textit{where $c$ is a constant independent of $\mathfrak{L}$.}

\begin{proof}

The multiplicity of each joint in $J$ can be at most $ \binom{L}{3}\leq L^3$. Therefore,
\begin{displaymath} \sum_{x \in J}N(x)^{{1/2}}\leq 2 \cdot \sum_{\{\lambda \in \N:\; 2^{\lambda} \leq L^3\}}|J_{2^{\lambda}}| \cdot (2^{\lambda})^{1/2}.\end{displaymath}
However, if $x$ is a joint for $\mathfrak{L}$ with multiplicity $N$, such that fewer than $2k$ lines of $\mathfrak{L}$ are passing through $x$, then $N \leq \binom{2k}{3} \leq (2k)^3$, and thus $k \geq \frac{1}{2}N^{1/3}$. Therefore, for all $\lambda \in \N$ such that $2^{\lambda} \leq L^3$, 
\begin{displaymath} |J_{2^{\lambda}}|=\sum_{\big\{\mu \in \N: \; 2^{\mu}\geq \frac{1}{2}(2^{\lambda})^{1/3} \big\}}|J^{2^\mu}_{2^{\lambda}}|,\end{displaymath} thus
\begin{displaymath} |J_{2^{\lambda}}|\cdot (2^{\lambda})^{1/2} =\sum_{\big\{\mu \in \N: \; 2^{\mu}\geq \frac{1}{2}(2^{\lambda})^{1/3}\big\}}|J^{2^\mu}_{2^{\lambda}}|\cdot (2^{\lambda})^{1/2},\end{displaymath}
a quantity which, by Proposition \ref{1.2}, is 
\begin{displaymath} \leq \sum_{\big\{\mu \in \N:\; 2^{\mu}\geq \frac{1}{2}(2^{\lambda})^{1/3}\big\}} c \cdot \Bigg(\frac{L^{3/2}}{(2^{\mu}) ^{1/2}} + \frac{L}{2^{\mu}}\cdot (2^{\lambda})^{1/2}\Bigg) \leq \end{displaymath}
\begin{displaymath} \leq c' \cdot \Bigg(\frac{L^{3/2}}{\big((2^{\lambda})^{1/3}\big)^{1/2}} + \frac{L}{(2^{\lambda})^{1/3}}\cdot (2^{\lambda})^{1/2}\Bigg), \end{displaymath} 
where $c'$ is a constant independent of $\mathfrak{L}$, $k$ and $\lambda$.

Therefore, 
\begin{displaymath} \sum_{x \in J} N(x)^{1/2}\leq 2c'\cdot \sum_{\{\lambda \in \N:\; 2^{\lambda}\leq L^3\}} \Bigg(\frac{L^{3/2}}{(2^{\lambda})^{1/6}} + L\cdot (2^{\lambda})^{1/6}\Bigg) \leq 
\end{displaymath}
\begin{displaymath} \leq c'' \cdot \big(L^{3/2} + L\cdot L^{1/2}\big)= c''\cdot L^{3/2}, \end{displaymath}
where $c''$ is a constant independent of $\mathfrak{L}$.

The proof of Theorem \ref{1.1} is now complete.

\end{proof}

\section{The case of more general curves}\label{4}

In this section we extend the definition of a joint to a more general setting. More particularly, let $\mathcal{F}$ be the family of all non-empty sets in $\R^3$ with the property that, if $\gamma \in \mathcal{F}$ and $x \in \gamma$, then a basic neighbourhood of $x$ in $\gamma$ is either $\{x\}$ or the finite union of parametrised curves, each homeomorphic to a semi-open line segment with one endpoint the point $x$. In addition, if there exists a parametrisation $f:[0,1)\rightarrow \R^3$ of one of these curves, with $f(0)=x$ and $f'(0)\neq 0$, then the line in $\R^3$ passing through $x$ with direction $f'(0)$ is tangent to $\gamma$ at $x$. If $\Gamma \subset \mathcal{F}$, we denote by $T_x^{\Gamma}$ the set of directions of all tangent lines at $x$ to the sets of $\Gamma$ passing through $x$ (note that $T_x^{\Gamma}$ might be empty and that there might exist many tangent lines to a set of $\Gamma$ at $x$).

Real algebraic curves in $\R^3$, as well as curves in $\R^3$ parametrised by real polynomials, belong to the family $\mathcal{F}$.

\textbf{Definition.} \textit{Let $\Gamma$ be a collection of sets in $\mathcal{F}$.  Then a point $x$ in $\R^3$ is a joint for the collection $\Gamma$ if}

(i)\textit{ $x$ belongs to at least one of the sets in $\Gamma$, and}\newline
(ii)\textit{ there exist at least 3 vectors in $T_x^{\Gamma}$ spanning $\R^3$.}

\textit{The multiplicity $N(x)$ of the joint $x$ is defined as the number of triples of lines in $\R^3$ passing through $x$, whose directions are linearly independent vectors in $T_x^{\Gamma}$.}

We will show here that, under certain assumptions on the characteristics of the sets in a finite collection $\Gamma \subset \mathcal{F}$, the statement of Theorem \ref{1.1} still holds, i.e. \begin{displaymath} \sum_{x \in J}N(x)^{1/2} \leq c \cdot |\Gamma|^{3/2}, \end{displaymath} where $J$ is the set of joints formed by $\Gamma$. To that end, we will need to recall and further analyse some facts from algebraic geometry.

\subsection{Analysing the geometric background}

If $\mathbb{K}$ is a field, then any set of the form \begin{displaymath} \{x \in \mathbb{K}^n: p_i(x)=0, \; \forall \; i=1,...,k\}, \end{displaymath} where $k \in \N$ and $p_i \in \mathbb{K}[x_1,...,x_n]$ for all $i =1,...,k$, is called an \textit{algebraic set} or an \textit{affine variety} or simply a \textit{variety} in $\mathbb{K}^n$, and is denoted by $V(p_1,...,p_k)$. A variety $V$ in $\mathbb{K}^n$ is \textit{irreducible} if it cannot be expressed as the union of two non-empty varieties in $\mathbb{K}^n$ which are strict subsets of $V$.

Now, if $V$ is a variety in $\mathbb{K}^n$, the set \begin{displaymath} I(V):= \{ p \in \mathbb{K}[x_1,...,x_n]: p(x)=0, \; \forall \; x \in V\} \end{displaymath}
is an ideal in $\mathbb{K}[x_1,...,x_n]$. If, in particular, $V$ is irreducible, then $I(V)$ is a prime ideal of $\mathbb{K}[x_1,...,x_n]$, and the transcendence degree of the ring $\mathbb{K}[x_1,...,x_n]/I(V)$ over $\mathbb{K}$ is the \textit{dimension} of the irreducible variety $V$. The \textit{dimension of an algebraic set} is the maximal dimension of all the irreducible varieties contained in the set. If an algebraic set has dimension 1 it is called an \textit{algebraic curve}, while if it has dimension $n-1$ it is called an \textit{algebraic hypersurface}.

Now, if $\gamma$ is an algebraic curve in $\C^n$, a generic hyperplane of $\C^n$ intersects the curve in a specific number of points (counted with appropriate multiplicities), which is called the \textit{degree} of the curve.

A consequence of B\'ezout's theorem (see, for example, \cite[Theorem 12.3]{MR732620} or \cite[Chapter 3, \S3]{MR2122859}) is the following.

\begin{theorem}\emph{\textbf{(B\'ezout)}}\label{4.1.2} Let $\gamma$ be an irreducible algebraic curve in $\C^n$ of degree $b$, and $p \in \C[x_1,...,x_n]$. If $\gamma$ is not contained in the zero set of $p$, it intersects the zero set of $p$ at most $b \cdot \deg p$ times. \end{theorem}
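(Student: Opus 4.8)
The plan is to deduce this from the classical projective form of B\'ezout's theorem for an irreducible curve and a hypersurface; the rest is bookkeeping about passing between the affine and the projective pictures. First I would projectivise: let $\bar\gamma\subseteq\mathbb{P}^n$ denote the projective closure of $\gamma$; it is again an irreducible algebraic curve, and its degree is still $b$, because $\bar\gamma$ has only finitely many points at infinity, so a generic hyperplane of $\mathbb{P}^n$ can be chosen to avoid them and hence meets $\bar\gamma$ in the same $b$ points (with the same multiplicities) in which a generic affine hyperplane meets $\gamma$. Set $d:=\deg p$ and let $\tilde p\in\C[x_0,x_1,\dots,x_n]$ be the homogenisation of $p$ with respect to a new variable $x_0$, so that $\tilde p$ is homogeneous of degree exactly $d$ and its zero set on the chart $\{x_0\neq0\}$ is $Z(p)$. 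Thus $Z(\tilde p)\subseteq\mathbb{P}^n$ is a hypersurface cut out by a polynomial of degree $d$; moreover $\bar\gamma\not\subseteq Z(\tilde p)$, since otherwise restricting to $\{x_0\neq0\}$ would force $\gamma\subseteq Z(p)$, contrary to hypothesis; and $\gamma\cap Z(p)\subseteq\bar\gamma\cap Z(\tilde p)$. It therefore suffices to show that $\bar\gamma\cap Z(\tilde p)$ contains at most $bd$ points.

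Now I would invoke B\'ezout. Since $\bar\gamma$ is an irreducible projective curve of degree $b$ not contained in the degree-$d$ hypersurface $Z(\tilde p)$, the curve--hypersurface form of B\'ezout's theorem (the form recorded in the cited references) gives
\[\sum_{P\in\bar\gamma\cap Z(\tilde p)} i_P \;=\; b\cdot d,\]
where $i_P\geq1$ is the local intersection multiplicity at $P$; in particular $\bar\gamma\cap Z(\tilde p)$, and hence $\gamma\cap Z(p)$, has at most $bd$ points. If one prefers to rely only on hyperplane sections, one can first post-compose with the degree-$d$ Veronese embedding $v_d\colon\mathbb{P}^n\hookrightarrow\mathbb{P}^M$: then $v_d(\bar\gamma)$ is an irreducible curve of degree $bd$, $v_d(Z(\tilde p))$ is the trace on $v_d(\mathbb{P}^n)$ of a hyperplane $H$ not containing $v_d(\bar\gamma)$, and a hyperplane not containing an irreducible projective curve $C$ meets it in at most $\deg C$ points, since the scheme-theoretic intersection has length exactly $\deg C$ and dropping multiplicities only lowers the count.

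I do not expect a genuine obstacle: the argument is a routine reduction, and the only points needing care are that the homogenisation has degree exactly $d$, that $\bar\gamma$ is not swallowed by $Z(\tilde p)$, and that taking the projective closure does not alter the degree $b$. The substantive input, B\'ezout's theorem itself, is imported wholesale from the literature; in the statement as phrased here one only keeps the resulting inequality (number of intersection \emph{points} $\leq b\cdot\deg p$), discarding the finer equality with multiplicities.
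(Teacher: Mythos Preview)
Your argument is correct: projectivising, homogenising, and invoking the curve--hypersurface form of B\'ezout (or, equivalently, passing through the Veronese embedding to reduce to a hyperplane section) is exactly the standard route, and you have handled the bookkeeping points (degree of the closure, degree of the homogenisation, $\bar\gamma\not\subseteq Z(\tilde p)$) carefully.

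As for comparison with the paper: the paper does not supply a proof of this statement at all. It is recorded there as a known consequence of B\'ezout's theorem, with references to Fulton's \emph{Intersection Theory} and to Cox--Little--O'Shea, and is then used as a black box. So your write-up in fact goes further than the paper does, spelling out the reduction that the paper leaves to the cited sources.
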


Now, if $\mathbb{K}$ is a field, an order $\prec$ on the set of monomials in $\mathbb{K}[x_1,...,x_n]$ is called a \textit{term order} if it is a total order on the monomials of $\mathbb{K}[x_1,...,x_n]$, such that it is multiplicative (i.e. it is preserved by multiplication by the same monomial) and the constant monomial is the $\prec$-smallest monomial. Then, if $I$ is an ideal in $\mathbb{K}[x_1,...,x_n]$, we define $in_{\prec}(I)$ as the ideal of $\mathbb{K}[x_1,...,x_n]$ generated by the $\prec$-initial terms, i.e. the $\prec$-largest monomial terms, of all the polynomials in $I$. 

Let $V$ be a variety in $\mathbb{K}[x_1,...,x_n]$ and $\prec$ a term order on the set of monomials in $\mathbb{K}[x_1,...,x_n]$. Also, let $S$ be a maximal subset of the set of variables $\{x_1,...,x_n\}$, with the property that no monomial in the variables in $S$ belongs to $in_{\prec}(I(V))$. Then, it holds that the dimension of $V$ is the cardinality of $S$ (see \cite{Sturmfels_2005}). From this fact, we deduce the following.

\begin{lemma}\label{extr}An irreducible real algebraic curve $\gamma$ in $\R^n$ is contained in an irreducible complex algebraic curve in $\C^n$. \end{lemma}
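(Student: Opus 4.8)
The plan is to complexify the vanishing ideal of $\gamma$ and then isolate a one-dimensional irreducible component of the resulting complex variety that still contains $\gamma$. Set $P:=I(\gamma)\subseteq\R[x_1,\dots,x_n]$; since $\gamma$ is irreducible, $P$ is prime, and since $\gamma$ is an algebraic curve we have $V(P)=\gamma$ and the transcendence degree of $\R[x_1,\dots,x_n]/P$ over $\R$ is $\dim\gamma=1$. Let $P^{e}:=P\cdot\C[x_1,\dots,x_n]$ be the extension of $P$ to the complex polynomial ring and put $V:=V(P^{e})\subseteq\C^{n}$. First I would observe that $\gamma\subseteq V$: every element of $P$ vanishes on $\gamma$, and every element of $P^{e}$ is a combination $\sum_{i}c_{i}g_{i}$ with $c_{i}\in\C$ and $g_{i}\in P$, so it too vanishes at each point of $\gamma\subseteq\R^{n}\subseteq\C^{n}$.

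The crucial step is to show $\dim V\le 1$, and for this I would use the description of dimension via initial ideals recalled just above. Fix a term order $\prec$ on the (common) set of monomials of $\R[x_1,\dots,x_n]$ and $\C[x_1,\dots,x_n]$, and a Gr\"obner basis $G\subseteq P$ of $P$ for $\prec$. A Gr\"obner basis is unaffected by extension of the coefficient field — Buchberger's criterion involves only arithmetic in the field generated by the coefficients of $G$ — so $G$ is also a Gr\"obner basis of $P^{e}$, whence $in_{\prec}(P^{e})$ is generated over $\C$ by the very monomials that generate $in_{\prec}(P)$ over $\R$; thus a monomial lies in $in_{\prec}(P^{e})$ if and only if it lies in $in_{\prec}(P)$. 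Since $V=V(P^{e})$ and $\C$ is algebraically closed, $I(V)=\sqrt{P^{e}}$, so $in_{\prec}(I(V))\supseteq in_{\prec}(P^{e})$; hence any set $S$ of variables containing no monomial of $in_{\prec}(I(V))$ contains no monomial of $in_{\prec}(P^{e})$, hence none of $in_{\prec}(P)$. Taking a maximal such $S$ and invoking the recalled fact for both $V$ and $\gamma$ gives $\dim V\le\dim\gamma=1$.

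Finally I would decompose $V=V_{1}\cup\cdots\cup V_{m}$ into irreducible complex components, each of dimension $\le\dim V\le 1$. Intersecting the inclusion $\gamma\subseteq V$ with $\R^{n}$ yields $\gamma=\bigcup_{i}\bigl(\gamma\cap(V_{i}\cap\R^{n})\bigr)$, a finite union of real algebraic subsets of $\gamma$, since each $V_{i}\cap\R^{n}$ is the real zero set of the real and imaginary parts of the polynomials defining $V_{i}$. Since $P$ is prime, $\gamma$ is irreducible for the Zariski topology of $\R^{n}$, so $\gamma$ must coincide with one of these subsets: $\gamma\subseteq V_{j}\cap\R^{n}\subseteq V_{j}$ for some $j$. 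A zero-dimensional complex variety is finite, so if $V_{j}$ were zero-dimensional then $\gamma$ would be finite, contradicting that $\gamma$ is a curve; hence $\dim V_{j}=1$, and $V_{j}$ is an irreducible complex algebraic curve containing $\gamma$, which is exactly the assertion of the lemma.

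I expect the only genuine obstacle to be the middle step — that passing from $\R$ to $\C$ cannot raise the dimension of the zero set — everything else being formal. One may alternatively show that $P^{e}$ is itself prime: as $I(\gamma)$ is a real ideal, $\sqrt{-1}$ is not a square in $\mathrm{Frac}\bigl(\R[x_1,\dots,x_n]/P\bigr)$, so $\mathrm{Frac}\bigl(\R[x_1,\dots,x_n]/P\bigr)\otimes_{\R}\C$ is a field and hence $\C[x_1,\dots,x_n]/P^{e}\cong(\R[x_1,\dots,x_n]/P)\otimes_{\R}\C$ is a domain; then $V$ is already irreducible and $\dim V=1$ follows from the transcendence-degree tower formula. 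I would nonetheless present the decomposition argument, as it uses only notions already introduced.
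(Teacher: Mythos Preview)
Your argument is correct and follows the same route as the paper's: extend generators of $I(\gamma)$ to $\C[x_1,\dots,x_n]$, use the initial-ideal description of dimension to bound the dimension of the resulting complex variety by $1$, and then pass to an irreducible component containing $\gamma$. Your version is more carefully justified --- you make the preservation of the initial ideal explicit via a Gr\"obner basis argument and extract the irreducible component via real Zariski irreducibility of $\gamma$ rather than the paper's brief appeal to B\'ezout --- but the skeleton is identical.
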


\begin{proof}  We clarify that, by saying that a real algebraic curve $\gamma_1$ in $\R^n$ is contained in a complex algebraic curve $\gamma_2$ in $\C^n$, we mean that, if $x \in \gamma_1$, then the point $x$, seen as an element of $\C^n$, belongs to $\gamma_2$ as well. 

Let $\gamma$ be an irreducible real algebraic curve in $\R^n$, and $\prec$ a term order on the set of monomials in the variables $x_1$, ..., $x_n$. From the discussion above, for every $i\neq j$, $i,j \in \{1,...,n\}$, there exists a monomial in the variables $x_i$ and $x_j$ in the ideal $in_{\prec}(I(\gamma))$. 

Now, the ideal $I(\gamma)$ is finitely generated, like any ideal of $\R[x_1,...,x_n]$. Let $\{p_1,...,p_k\}$ be a finite set of generators of $I(\gamma)$, and let $I':=(p_1,...,p_k)$ be the ideal in $\C[x_1,...,x_n]$ generated by the polynomials $p_1$, ..., $p_k$, this time seen as elements of $\C[x_1,...,x_n]$. We consider the complex variety $V'=V(p_1,...,p_k)$ and the ideal $I(V')$ in $\C[x_1,...,x_n]$. Since the polynomials in $I(\gamma)$, seen as elements of $\C[x_1,...,x_n]$, are elements of $I(V')$, it holds that for every $i\neq j$, $i,j \in \{1,...,n\}$, there exists a monomial in the variables $x_i$ and $x_j$ in the ideal $in_{\prec}(I(V'))$. Therefore, the variety $V'$ has dimension 1 (it cannot have dimension 0, as it is not a finite set of points). 

Therefore, $\gamma$ is contained in a complex algebraic curve. It is finally easy to see by B\'ezout's theorem that $\gamma$ is contained in an irreducible component of that curve.

\end{proof}

Now, by B\'ezout's theorem, we can deduce the following.

\begin{corollary}\label{4.1.4} Let $\gamma _1$, $\gamma_2$ be two distinct irreducible complex algebraic curves in $\C^n$. Then, they have at most $\deg \gamma_1\cdot \deg \gamma_2$ common points. \end{corollary}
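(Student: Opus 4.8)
The plan is to reduce the statement to the production of a single auxiliary polynomial and then invoke Theorem~\ref{4.1.2}. Precisely, it suffices to find a polynomial $p \in \C[x_1,\dots,x_n]$ with $\deg p \leq \deg\gamma_1$ such that $\gamma_1$ is contained in the zero set of $p$ but $\gamma_2$ is not: indeed, since $\gamma_2$ is irreducible of degree $\deg\gamma_2$ and is not contained in the zero set of $p$, Theorem~\ref{4.1.2} bounds the number of points of $\gamma_2$ lying in the zero set of $p$ by $\deg\gamma_2\cdot\deg p\leq \deg\gamma_1\cdot\deg\gamma_2$, and $\gamma_1\cap\gamma_2$ is contained in that set. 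Note also that, since $\gamma_1$ and $\gamma_2$ are irreducible of dimension $1$ and $\gamma_1\neq\gamma_2$, neither is contained in the other; in particular there exists a point $R\in\gamma_2\setminus\gamma_1$, which is the point we shall use below to certify that $p$ does not vanish on $\gamma_2$.

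It remains to construct $p$. If $n=2$, write $\gamma_1$ as the zero set of an irreducible polynomial $q_1\in\C[x_1,x_2]$, whose degree equals $\deg\gamma_1$, and take $p:=q_1$; since $\gamma_2$ is irreducible and distinct from $\gamma_1$, it cannot lie in the zero set of $q_1$. If $n\geq 3$, fix $R\in\gamma_2\setminus\gamma_1$ and choose a sufficiently generic affine-linear map $\pi:\C^n\to\C^2$. For such $\pi$, the Zariski closure $C_1$ of $\pi(\gamma_1)$ is an irreducible plane curve, say the zero set of an irreducible $q_1\in\C[u,v]$, and $\pi(R)\notin C_1$. Moreover $\deg q_1=\deg C_1\leq\deg\gamma_1$: the number $\deg C_1$ equals the number of intersection points of $C_1$ with a generic line $\ell\subset\C^2$; since $C_1\setminus\pi(\gamma_1)$ is finite, a generic $\ell$ meets $C_1$ only at points of $\pi(\gamma_1)$, and these lift under $\pi$ to at least as many distinct points of $\gamma_1$ lying on the affine hyperplane $\pi^{-1}(\ell)$ — a hyperplane not containing $\gamma_1$ — which by Theorem~\ref{4.1.2} applied with a degree-$1$ polynomial carries at most $\deg\gamma_1$ points of $\gamma_1$. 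Now set $p:=q_1\circ\pi\in\C[x_1,\dots,x_n]$; then $\deg p\leq\deg q_1\leq\deg\gamma_1$, the curve $\gamma_1$ lies in the zero set of $p$ because $\pi(\gamma_1)\subset C_1$, and $p(R)=q_1(\pi(R))\neq 0$, so $\gamma_2$ is not contained in the zero set of $p$. This yields the required polynomial, and the corollary follows.

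The step that needs the most care is the genericity of $\pi$ when $n\geq 3$: one must verify that, for $\pi$ lying outside a proper subvariety of the space of affine-linear maps $\C^n\to\C^2$, the image closure $\overline{\pi(\gamma_1)}$ is genuinely a curve (not a single point) and does not pass through the fixed point $\pi(R)$. Both are routine dimension counts, most transparently carried out after passing to projective closures so as to control the finitely many points at infinity of $\pi(\gamma_1)$, and are among the standard facts about generic linear projections of curves found in the algebraic-geometry references already cited. Alternatively, one can bypass the projection entirely by invoking the classical fact that an irreducible projective variety $X\subseteq\mathbb{P}^n$ is a set-theoretic intersection of hypersurfaces of degree at most $\deg X$: applying this to the projective closure of $\gamma_1$ produces finitely many polynomials of degree $\leq\deg\gamma_1$ whose common zero set is $\gamma_1$, and since $\gamma_2\not\subseteq\gamma_1$ at least one of them fails to vanish identically on $\gamma_2$, which is exactly the $p$ we need.
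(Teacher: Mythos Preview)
Your proposal is correct. In fact, your ``alternative'' at the end is precisely the paper's proof: the paper simply invokes the result (cited there as \cite[Theorem A.3]{MR2827010}) that an algebraic curve in $\C^n$ is cut out set-theoretically by hypersurfaces of degree at most its degree, picks one of those hypersurfaces not containing the other curve, and applies Theorem~\ref{4.1.2}. The only cosmetic difference is that the paper swaps the roles of $\gamma_1$ and $\gamma_2$ (it cuts out $\gamma_2$ and lets $\gamma_1$ play the role of the curve to which B\'ezout is applied).

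Your main route via a generic linear projection $\pi:\C^n\to\C^2$ is a genuinely different construction of the auxiliary polynomial $p$. It has the virtue of being more explicit and self-contained, since it only uses the plane-curve case (where the degree of the defining equation equals the degree of the curve by definition) together with elementary facts about generic projections. The cost is exactly the point you flag: one must check that for generic $\pi$ the closure $\overline{\pi(\gamma_1)}$ avoids $\pi(R)$, which, while standard, is more work than the single citation the paper uses. Either approach yields the same bound with the same reduction to Theorem~\ref{4.1.2}.
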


\begin{proof} Since $\gamma_2$ is an algebraic curve in $\C^n$ and $\C$ is an algebraically closed field, it follows that $\gamma_2$ is the intersection of the zero sets of  $\lesssim_{n, \deg \gamma_2}1$ irreducible polynomials in $\C[x_1,...,x_n]$, each of which has degree at most $\deg \gamma_2$ (see \cite[Theorem A.3]{MR2827010}). The zero set of at least one of these polynomials does not contain $\gamma_1$, so, by Theorem \ref{4.1.2}, $\gamma_1$ intersects it at most $\deg \gamma_1\cdot \deg \gamma_2$ times. Therefore, $\gamma_1$ intersects $\gamma_2$, which is contained in the zero set of the above-mentioned polynomial, at most $\deg \gamma_1\cdot \deg \gamma_2$ times.

\end{proof}

Corollary \ref{4.1.4} easily implies the following.

\begin{lemma}\label{4.1.1} An irreducible real algebraic curve $\gamma$ in $\R^n$ is contained in a unique irreducible complex algebraic curve in $\C^n$.
\end{lemma}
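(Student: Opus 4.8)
The plan is to combine the existence statement of Lemma \ref{extr} with the quantitative intersection bound of Corollary \ref{4.1.4}. By Lemma \ref{extr}, $\gamma$ is contained in at least one irreducible complex algebraic curve in $\C^n$, so only the uniqueness needs to be established.

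To do this, I would argue by contradiction: suppose $\gamma$ is contained in two \emph{distinct} irreducible complex algebraic curves $\gamma_1$ and $\gamma_2$ in $\C^n$. Then $\gamma \subseteq \gamma_1 \cap \gamma_2$, so $\gamma_1$ and $\gamma_2$ have at least as many common points as $\gamma$ has points. The next step is to observe that $\gamma$, being an (irreducible real) algebraic curve, is an infinite set: its coordinate ring $\R[x_1,\dots,x_n]/I(\gamma)$ has transcendence degree $1$ over $\R$, hence at least one coordinate function is transcendental on $\gamma$ and therefore takes infinitely many values on it — exactly the observation already used in the proof of Lemma \ref{extr} that such a curve ``is not a finite set of points''. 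Consequently $\gamma_1$ and $\gamma_2$ share infinitely many points, which contradicts Corollary \ref{4.1.4}, since two distinct irreducible complex algebraic curves meet in at most $\deg\gamma_1 \cdot \deg\gamma_2 < \infty$ points. Hence $\gamma_1 = \gamma_2$, and the irreducible complex algebraic curve containing $\gamma$ is unique.

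The main (and essentially only) point requiring care is the justification that $\gamma$ is infinite; once that is in hand, the rest is a direct invocation of the already-proved Lemma \ref{extr} and Corollary \ref{4.1.4}, with no further computation needed.
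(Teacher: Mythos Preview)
Your proof is correct and follows essentially the same route as the paper: existence comes from Lemma \ref{extr}, and uniqueness follows because two distinct irreducible complex curves containing the infinite set $\gamma$ would violate Corollary \ref{4.1.4}. The only difference is that you spell out why $\gamma$ is infinite via the transcendence-degree argument, whereas the paper leaves this implicit.
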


\begin{proof} Let $\gamma$ be a real algebraic curve in $\R^n$. By Lemma \ref{extr}, $\gamma$ is contained in an irreducible complex algebraic curve in $\C^n$. Suppose that there exist two irreducible complex algebraic curves $\gamma_1$ and $\gamma_2$ in $\C^n$ containing $\gamma$. Then, $\gamma_1$ and $\gamma_2$ intersect at infinitely many points, and thus, by Corollary \ref{4.1.4}, they coincide.

\end{proof}

Note that, by the above, the smallest complex algebraic curve containing a real algebraic curve is the union of the irreducible complex algebraic curves, each of which contains an irreducible component of the real algebraic curve.

In particular, the following holds.

\begin{lemma} \label{newstuff} Any real algebraic curve in $\R^n$ is the intersection of $\R^n$ with the smallest complex algebraic curve containing it. 
\end{lemma}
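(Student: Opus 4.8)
The plan is to reduce to the case of an irreducible curve and then to compare such a curve with the complex zero set of its real defining ideal.

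First I would write $\gamma=\gamma_1\cup\cdots\cup\gamma_m$ as the union of its irreducible components, each of which I take to be an irreducible real algebraic curve; this is implicit in the statement, since a real algebraic curve having a $0$-dimensional component admits no well-defined smallest complex algebraic curve through it. By Lemma \ref{4.1.1} each $\gamma_i$ is contained in a unique irreducible complex algebraic curve $\tilde\gamma_i$, and by the remark preceding the present lemma the smallest complex algebraic curve containing $\gamma$ is $\tilde\gamma:=\tilde\gamma_1\cup\cdots\cup\tilde\gamma_m$. Since $\tilde\gamma\cap\R^n=\bigcup_i(\tilde\gamma_i\cap\R^n)$ and $\gamma=\bigcup_i\gamma_i$, it is enough to prove $\tilde\gamma_i\cap\R^n=\gamma_i$ for each $i$; so from this point on I would assume $\gamma$ itself is an irreducible real algebraic curve, with $\tilde\gamma$ the unique irreducible complex curve containing it.

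For the irreducible case, the inclusion $\gamma\subseteq\tilde\gamma\cap\R^n$ is immediate, so the work lies entirely in the reverse inclusion. I would fix a finite generating set $\{p_1,\dots,p_k\}$ of the ideal $I(\gamma)\subseteq\R[x_1,\dots,x_n]$ and set $W:=V(p_1,\dots,p_k)\subseteq\C^n$, the common complex zero set of these polynomials regarded in $\C[x_1,\dots,x_n]$. The proof of Lemma \ref{extr} already establishes that $W$ has dimension $1$ and that $\gamma$ lies in one irreducible component of $W$; since that component is an irreducible complex curve containing $\gamma$, the uniqueness in Lemma \ref{4.1.1} forces it to be $\tilde\gamma$, whence $\tilde\gamma\subseteq W$. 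On the other hand, for $x\in\R^n$ one has $x\in W$ iff $p_i(x)=0$ for all $i$, iff $x\in V_\R(p_1,\dots,p_k)=V_\R(I(\gamma))=\gamma$; that is, $W\cap\R^n=\gamma$. Combining the two facts gives $\tilde\gamma\cap\R^n\subseteq W\cap\R^n=\gamma$, which is exactly the inclusion we need.

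I expect there to be essentially no new difficulty at the level of ideas: the dimension count — that $W$ is a complex curve, hence contains $\tilde\gamma$ rather than something larger — is already carried out inside the proof of Lemma \ref{extr}, and the only genuinely fresh observation is the elementary identity $W\cap\R^n=\gamma$ together with the uniqueness in Lemma \ref{4.1.1} pinning down the relevant component of $W$. The point demanding the most care is the bookkeeping in the reduction step: one must make sure the decomposition into irreducible components is legitimate here, i.e. that every component of $\gamma$ is $1$-dimensional, which (as noted) is forced by the very hypothesis that a smallest complex algebraic curve through $\gamma$ exists.
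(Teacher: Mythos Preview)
Your proof is correct. Both your argument and the paper's ultimately rest on the same elementary observation: if $p_1,\dots,p_k\in\R[x_1,\dots,x_n]$ cut out $\gamma$ over $\R$, then their common complex zero locus $W$ satisfies $W\cap\R^n=\gamma$, so it suffices to place $\gamma_\C$ inside $W$. Where you diverge is in how you obtain this inclusion. You first reduce to the irreducible case and then invoke Lemmas~\ref{extr} and~\ref{4.1.1}: the proof of Lemma~\ref{extr} already shows $\dim_\C W=1$, and uniqueness forces the irreducible component of $W$ through $\gamma$ to be $\tilde\gamma$, whence $\tilde\gamma\subseteq W$. The paper instead works directly with a possibly reducible $\gamma$ and argues from minimality alone: for each $i$, the set $V_\C(p_i)\cap\gamma_\C$ is a complex algebraic curve containing $\gamma$ and sitting inside $\gamma_\C$, hence equals $\gamma_\C$ by minimality, so $\gamma_\C\subseteq V_\C(p_i)$. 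The paper's route is a touch shorter and self-contained, needing neither the reduction to components nor the dimension computation buried in Lemma~\ref{extr}; your route has the virtue of recycling that earlier work and making the role of the uniqueness statement explicit. Your caveat about possible $0$-dimensional components is a fair point that the paper leaves implicit.
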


\begin{proof} Let $\gamma$ be a real algebraic curve in $\R^n$ and $\gamma _{\C}$ the smallest complex algebraic curve containing it. We will show that $\gamma = \R^n \cap \gamma _{\C}$. 

Let $x \in \R^n$, such that $x \notin \gamma$; then, $x \notin \gamma_{\C}$. Indeed, $\gamma$ is the intersection of the zero sets, in $\R^n$, of some polynomials $p_1$, ..., $p_k \in \R[x_1,...,x_n]$. Since $x \notin \gamma$, it follows that $x$ does not belong to the zero set of $p_i$ in $\R^n$, for some $i \in \{1,...,k\}$. However, $x\in \R^n$, so it does not belong to the zero set of $p_i$ in $\C^n$, either. 

Now, the zero set of $p_i$ in $\C^n$ is a complex algebraic set containing $\gamma$, and therefore its intersection with $\gamma_{\C}$ is a complex algebraic set containing $\gamma$; in fact, it is a complex algebraic curve, since it contains the infinite set $\gamma$ and lies inside the complex algebraic curve $\gamma _{\C}$. Therefore, the intersection of the zero set of $p_i$ in $\C^n$ and $\gamma_{\C}$ is equal to $\gamma _{\C}$, as otherwise it would be a complex algebraic curve, smaller that $\gamma_{\C}$, containing $\gamma$. This means that $\gamma_{\C}$ is contained in the zero set of $p_i$ in $\C^n$, and since $x$ does not belong to the zero set of $p_i$ in $\C^n$, it does not belong to $\gamma _{\C}$ either.

Therefore, $\gamma = \R^n \cap \gamma _{\C}$.

\end{proof}

Now, even though a generic hyperplane of $\C^n$ intersects a complex algebraic curve in $\C^n$ in a fixed number of points, this is not true in general for real algebraic curves. However, by Lemma \ref{4.1.1}, we can define the \textit{degree of an irreducible real algebraic curve} in $\R^n$ as the degree of the (unique) irreducible complex algebraic curve in $\C^n$ containing it. Furthermore, we can define the \textit{degree of a real algebraic curve} in $\R^n$ as the degree of the smallest complex algebraic curve in $\C^n$ containing it. With this definition, and due to Lemma \ref{newstuff}, the degree of a real algebraic curve in $\R^n$ is equal to the sum of the degrees of its irreducible components (Lemma \ref{newstuff} ensures that distinct irreducible components of a real algebraic curve in $\R^n$ are contained in distinct irreducible complex algebraic curves in $\C^n$).

Therefore, if, by saying that a real algebraic curve $\gamma$ in $\R^n$ \textit{crosses itself at the point }$x_0 \in \gamma$, we mean that any neighbourhood of $x_0$ in $\gamma$ is homeomorphic to at least two intersecting lines, it follows that a real algebraic curve in $\R^n$ crosses itself at a point at most as many times as its degree.

An immediate consequence of the discussion above is the following.

\begin{corollary}\label{4.1.3} Let $\gamma$ be an irreducible real algebraic curve in $\R^n$ of degree $b$, and $p \in \R[x_1,...,x_n]$. If $\gamma$ is not contained in the zero set of $p$, it intersects the zero set of $p$ at most $b \cdot \deg p$ times. \end{corollary}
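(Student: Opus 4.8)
The plan is to reduce the statement to the complex Bézout estimate of Theorem \ref{4.1.2} by passing to the unique irreducible complex curve attached to $\gamma$. First I would invoke Lemma \ref{4.1.1} to obtain the unique irreducible complex algebraic curve $\gamma_{\C}$ in $\C^n$ containing $\gamma$. By the very convention adopted above for the degree of an irreducible real algebraic curve, $\deg \gamma_{\C}=b$.

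Next I would check that $\gamma_{\C}$ is not contained in the zero set of $p$, now regarded as an element of $\C[x_1,\dots,x_n]$. Since $\gamma$ is not contained in the zero set of $p$ in $\R^n$, there is a point $x_0\in\gamma$ with $p(x_0)\neq 0$; viewing $x_0$ as an element of $\C^n$, it lies in $\gamma_{\C}$ but not in the zero set of $p$ in $\C^n$, so indeed $\gamma_{\C}$ is not contained in that zero set. Hence Theorem \ref{4.1.2} applies and shows that $\gamma_{\C}$ meets the zero set of $p$ in $\C^n$ at most $\deg\gamma_{\C}\cdot\deg p = b\cdot\deg p$ times.

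Finally, every point of $\R^n$ at which $\gamma$ meets the zero set of $p$ in $\R^n$ is, when regarded as a point of $\C^n$, a point at which $\gamma_{\C}$ meets the zero set of $p$ in $\C^n$ (because $\gamma\subset\gamma_{\C}$ and the zero set of $p$ in $\R^n$ is the intersection of $\R^n$ with the zero set of $p$ in $\C^n$). Therefore the number of the former is bounded by the number of the latter, i.e.\ by $b\cdot\deg p$. There is essentially no obstacle here; the only points requiring a moment of care are that the real-to-complex passage cannot inflate the intersection count — which is immediate from $\gamma\subset\gamma_{\C}$ — and that the $b$ appearing in Theorem \ref{4.1.2} is exactly the quantity fixed by our definition of the degree of an irreducible real algebraic curve, which it is by construction of $\gamma_{\C}$.
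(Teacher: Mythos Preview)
Your argument is correct and is exactly the approach the paper intends: the paper states this corollary as ``an immediate consequence of the discussion above,'' namely the definition of the degree of an irreducible real algebraic curve via the unique irreducible complex curve $\gamma_{\C}$ containing it (Lemma \ref{4.1.1}) together with the complex B\'ezout estimate (Theorem \ref{4.1.2}). Your write-up simply spells out that passage in detail.
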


We now discuss projections of real algebraic curves. This leads us to the study of semi-algebraic sets.

More particularly, a \textit{basic real semi-algebraic set} in $\R^n$ is any set of the form \begin{displaymath} \{x\in \R^n: P(x)=0 \text{ and }  Q(x)>0, \; \forall \; Q\in \mathcal{Q} \}, \end{displaymath} where $P\in \R[x_1,...,x_n]$ and $\mathcal{Q}$ is a finite family of polynomials in $\R[x_1,...,x_n]$. A \textit{real semi-algebraic set} in $\R^n$ is defined as a finite union of basic real semi-algebraic sets. Note that a real algebraic set in $\R^n$ is, in fact, a basic real semi-algebraic set in $\R^n$, since it can be expressed as the zero set of a single real $n$-variate polynomial (a real algebraic set in $\R^n$ is the intersection of the zero sets, in $\R^n$, of some polynomials $p_1$, ..., $p_k \in \R[x_1,...,x_n]$, which is equal to the zero set, in $\R^n$, of the polynomial $p_1^2+...+p_k^2 \in \R[x_1,...,x_n]$). 

What holds is the following (see \cite[Chapter 2, \S3]{MR2248869} for a proof).

\begin{theorem}\label{4.1.5} The projection of a real algebraic set of $\R^n$ on any hyperplane of $\R^n$ is a real semi-algebraic set. \end{theorem}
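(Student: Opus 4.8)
The plan is to deduce this from the Tarski--Seidenberg theorem on quantifier elimination over real closed fields. First I would observe that a real algebraic set is a special case of a real semi-algebraic set --- as already noted above, the algebraic set $V(p_1,\dots,p_k)$ coincides with the zero set of the single polynomial $p_1^2+\cdots+p_k^2$, hence is a basic real semi-algebraic set --- so it suffices to prove the stronger statement that the projection of \emph{any} real semi-algebraic set of $\R^n$ onto a hyperplane is real semi-algebraic. Moreover, since the class of real semi-algebraic sets in $\R^n$ is invariant under invertible affine transformations of $\R^n$ (these carry polynomial equalities and inequalities into polynomial equalities and inequalities), I may assume after such a change of coordinates that the hyperplane is $\{x_n=0\}$ and that the projection is the map $\pi\colon\R^{n-1}\times\R\to\R^{n-1}$ forgetting the last coordinate.

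Next, for a real semi-algebraic set $A\subset\R^n$, its projection is $\pi(A)=\{x\in\R^{n-1}:\exists\,t\in\R,\ (x,t)\in A\}$. Writing $A$ as the set of points satisfying a quantifier-free formula $\varphi(x,t)$ in the language of ordered fields (a finite Boolean combination of conditions $g(x,t)=0$ and $h(x,t)>0$ with $g,h\in\R[x_1,\dots,x_{n-1},t]$), the set $\pi(A)$ is exactly the set defined by the formula $\exists t\,\varphi(x,t)$. By the Tarski--Seidenberg theorem this formula is equivalent over $\R$ to a quantifier-free formula in $x$, so $\pi(A)$ is again semi-algebraic, which is precisely the assertion; for the algebraic case one simply takes $\varphi$ to be the single equality $p_1^2+\cdots+p_k^2=0$.

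If instead one wants a self-contained argument, the crux is the elimination of a single existential quantifier, and this is the step I expect to be the main obstacle. Decomposing $A$ into finitely many basic pieces, one is reduced to eliminating $\exists t$ from a system $P(x,t)=0,\ Q_1(x,t)>0,\dots,Q_m(x,t)>0$, regarded as univariate polynomial conditions in $t$ whose coefficients are polynomials in $x$. One must then show that the existence of a real $t$ with this exact sign pattern is equivalent to a finite Boolean combination of sign conditions on explicit polynomials in $x$ manufactured from the coefficients of $P$ and the $Q_i$ --- concretely, via the principal subresultant coefficients of $P$ and $P'$ (which control the number of distinct real roots of $P$) together with Sturm-type sequences for the $Q_i$ evaluated at those roots. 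The genuinely delicate point is the uniformity: establishing, case by case on the vanishing pattern of the leading coefficients, that these sign data determine the answer for \emph{all} parameter values $x$ simultaneously. This cylindrical-algebraic-decomposition bookkeeping is carried out in full in \cite{MR2248869}, and I would either cite that treatment or, space permitting, prove the one-quantifier lemma and then induct on the number of quantifiers.
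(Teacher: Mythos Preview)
Your argument is correct: reducing to the coordinate projection, noting that algebraic sets are semi-algebraic, and then invoking Tarski--Seidenberg quantifier elimination is exactly the standard route, and your outline of the one-variable elimination step via subresultants and Sturm sequences is accurate. The paper itself does not supply a proof of this theorem at all---it simply cites \cite[Chapter 2, \S3]{MR2248869}, which is the very reference you invoke for the detailed cylindrical-algebraic-decomposition bookkeeping---so your proposal is entirely in line with (and more explicit than) what the paper does.
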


We further notice that any set of the form $ \{x\in \R^{n}: Q(x)>0, \; \forall \; Q \in\mathcal{Q}\}$, where $\mathcal{Q}$ is a finite subset of $\R[x_1,...,x_n]$, is open in $\R^{n}$ (with the usual topology). Therefore, a basic real semi-algebraic set in $\R^n$ that is not open in $\R^n$ is of the form $ \{x\in \R^{n}: P(x)=0 \text{ and }  Q(x)>0, \; \forall \; Q\in \mathcal{Q} \}$, where $\mathcal{Q}$ is a finite subset of $\R[x_1,...,x_{n}]$ and $P\in \R[x_1,...,x_n]$ is a non-zero polynomial. Thus, each basic real semi-algebraic set in $\R^n$ that is not open in $\R^n$ (with the usual topology) is contained in a real algebraic set of dimension at most $n-1$.

Now, if $\gamma$ is a real algebraic curve in $\R^3$, its projection on a generic plane $H \simeq \R^2$ is a finite union of basic real semi-algebraic sets which are not open in $H$, so each of them is contained in some real algebraic set of dimension at most 1. However, the projection of a curve in $\R^3$ on a generic plane is not a finite set of points. Therefore, at least one of these basic real semi-algebraic sets is an infinite set of points, contained in some real algebraic curve in $H$. From this fact, as well as a closer study of the algorithm that constitutes the proof of Theorem \ref{4.1.5} as described in \cite[Chapter 2, \S3]{MR2248869}, we can finally see that the projection of $\gamma$ on a generic plane $H$ is the union of at most $B_{\deg \gamma}$ basic real semi-algebraic sets, each of which either consists of at most $B'_{\deg \gamma}$ points or is contained in a real algebraic curve in $H$ of degree at most $B'_{\deg \gamma}$, where $B_{\deg \gamma}$, $B'_{\deg \gamma}$ are integers depending only on the degree $\deg \gamma$ of $\gamma$. Therefore, the following is true.

\begin{lemma}\label{4.1.6} Let $\gamma$ be a real algebraic curve in $\R^3$. There exists an integer $C_{\deg \gamma} \geq \deg \gamma$, depending only on the degree $\deg \gamma$ of $\gamma$, such that the projection of $\gamma$ on a generic plane is contained in a planar real algebraic curve of degree at most $C_{\deg \gamma}$. \end{lemma}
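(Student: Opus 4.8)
The plan is to combine Theorem \ref{4.1.5} with the structural remarks recorded in the paragraph preceding the statement, the only genuinely new ingredient being the bookkeeping needed to make all the numerical parameters depend on $\deg\gamma$ alone.

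First I would fix a generic plane $H \simeq \R^2$ and let $\pi$ denote the projection $\R^3 \to H$. By Theorem \ref{4.1.5}, the image $\pi(\gamma)$ is a real semi-algebraic set, hence a finite union $\bigcup_{i=1}^{m} S_i$ of basic real semi-algebraic sets. Next I would check that $\pi(\gamma)$ has empty interior in $H$: for all but at most one line $\ell \subset H$ the plane $\pi^{-1}(\ell) \subset \R^3$ fails to contain $\gamma$ (two distinct planes containing $\gamma$ would force $\gamma$ into a line, a case in which the Lemma is trivial), so by Corollary \ref{4.1.3} the set $\gamma \cap \pi^{-1}(\ell)$, and hence $\pi(\gamma)\cap \ell$, is finite; a set containing an open subset of $H$ could not meet a generic line in a finite set. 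Consequently no $S_i$ is open in $H$, so, by the observation just before the statement, each $S_i$ is contained in a real algebraic set of dimension $\le 1$ in $H$. Discarding the finitely many $S_i$ that are themselves finite (each such is a finite point set, hence contained in a union of lines in $H$) and absorbing isolated points, we obtain that $\pi(\gamma)$ is contained in a union $V = \bigcup_i V_i$ of real algebraic curves in $H$; by the additivity of degree over components noted after Lemma \ref{newstuff}, $\deg V \le \sum_i \deg V_i$.

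It then remains to bound $m$ and each $\deg V_i$ in terms of $\deg\gamma$ only, and here I would invoke the effective version of the construction behind Theorem \ref{4.1.5}. By Lemma \ref{newstuff} and Lemma \ref{4.1.1}, $\gamma = \R^3 \cap \gamma_{\C}$, where $\gamma_{\C}$ is the smallest complex algebraic curve containing $\gamma$, of degree $\deg\gamma$; and, exactly as in the proof of Corollary \ref{4.1.4}, $\gamma_{\C}$ is the common zero set of $O_{\deg\gamma}(1)$ polynomials of degree $\le \deg\gamma$. Hence $\gamma$ is the common real zero locus of a family of real polynomials whose cardinality and degrees are controlled by $\deg\gamma$. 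Feeding such a family into the cylindrical-algebraic-decomposition / quantifier-elimination algorithm that proves Theorem \ref{4.1.5} (see \cite[Chapter 2, \S3]{MR2248869}), the complexity of the output --- the number $m$ of basic pieces and the degrees of all polynomials describing them --- is bounded by a function of the number of input polynomials, their degrees, and the ambient dimension $3$, hence by a function $C_{\deg\gamma}$ of $\deg\gamma$ alone. Enlarging $C_{\deg\gamma}$ to ensure $C_{\deg\gamma}\ge\deg\gamma$ gives the claim.

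The step I expect to be the main obstacle is precisely this last one: carefully tracking the growth of the relevant parameters through the Bochnak--Coste--Roy procedure to confirm that the final bound depends only on $\deg\gamma$ (and the fixed ambient dimension), not on the particular polynomials cutting out $\gamma$. By contrast, the geometric content --- that the projection of a curve is swept out by a curve --- is soft, being immediate from Theorem \ref{4.1.5} together with the emptiness-of-interior remark. An alternative to the hands-on complexity analysis would be to quote an off-the-shelf effective quantifier-elimination bound, which packages exactly this dependence.
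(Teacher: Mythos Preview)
Your proposal is correct and follows essentially the same route as the paper: apply Theorem \ref{4.1.5} to get a semi-algebraic projection, observe that none of the basic pieces can be open in $H$ (you argue this explicitly via line-slicing, while the paper simply asserts it), so each sits in a planar algebraic set of dimension $\le 1$, and then invoke the effective complexity bounds coming from the quantifier-elimination algorithm in \cite[Chapter 2, \S3]{MR2248869} to control the number of pieces and their degrees in terms of $\deg\gamma$ alone. The paper's argument is the paragraph immediately preceding the Lemma and is, if anything, terser than yours on the crucial bookkeeping step, which it dispatches with the phrase ``a closer study of the algorithm''.
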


Note that this means that the \textit{Zariski closure} of the projection of a real algebraic curve $\gamma$ of $\R^3$ on a generic plane, i.e. the smallest variety containing that projection, is, in fact, a planar real algebraic curve.

Our aim now is to find an upper bound on the number of times a planar real algebraic curve $\gamma$ crosses itself, and eventually establish an upper bound on the number of times a real algebraic curve in $\R^3$ crosses itself. To that end, we proceed to show that a planar real algebraic curve $\gamma$ is the zero set of a single, square-free bivariate real polynomial, of degree $\lesssim \deg \gamma$.

\begin{lemma} \label{newstuff2} Let $\gamma$ be an irreducible planar complex algebraic curve. Then, $\gamma$ is the zero set, in $\C^2$, of a single, irreducible polynomial $p \in \C[x,y]$, of degree $\leq \deg \gamma$.
\end{lemma}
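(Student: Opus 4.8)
The plan is to identify $\gamma$ with the zero set of a generator of its vanishing ideal, using that $\C[x,y]$ is a two-dimensional unique factorisation domain, and then to read off the degree from a generic line section. First I would set up the algebra: since $\gamma$ is irreducible, $I(\gamma)$ is a prime ideal of $\C[x,y]$, and since $\dim\gamma=1$ the curve is a proper and infinite subset of $\C^2$, so $I(\gamma)$ is neither the zero ideal nor a maximal ideal. Picking any non-zero $f\in I(\gamma)$ and factoring it into irreducibles in $\C[x,y]$, primeness of $I(\gamma)$ forces some irreducible factor $p$ to lie in $I(\gamma)$; thus $(p)\subseteq I(\gamma)$, with $(p)$ a non-zero prime ideal.

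The crux --- and the step I expect to be the only real obstacle --- is to promote this inclusion to an equality $I(\gamma)=(p)$, i.e.\ to know that the vanishing ideal of the plane curve $\gamma$ is principal. For this I would invoke dimension theory: the surjection $\C[x,y]/(p)\twoheadrightarrow\C[x,y]/I(\gamma)$ is a surjection between finitely generated $\C$-algebras that are integral domains, both of dimension $1$ (the source because $V(p)$ is a curve containing $\gamma$, the target because $\dim\gamma=1$), so its kernel $I(\gamma)/(p)$ is a height-$0$ prime of a domain, hence zero; thus $I(\gamma)=(p)$. (Equivalently: a non-zero non-maximal prime of $\C[x,y]$ has height one, and a height-one prime of a UFD is principal.) Since $\gamma$ is a variety it equals $V(I(\gamma))=V(p)$, exhibiting $\gamma$ as the zero set in $\C^2$ of the single irreducible polynomial $p$.

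It then remains to bound $\deg p$ by $\deg\gamma$ via a generic line section, which is routine. Writing $D=\deg p$ and letting $p_D\neq 0$ be the top-degree homogeneous part of $p$, for a line $\ell=\{(a+tu,\,b+tv):t\in\C\}$ the restriction $p(a+tu,b+tv)\in\C[t]$ has $t^D$-coefficient $p_D(u,v)$, which is non-zero for a generic direction $(u,v)$; hence for a generic line this univariate polynomial has degree exactly $D$ and so $D$ roots counted with multiplicity, each yielding a point of $\gamma\cap\ell$. Comparing with the definition of $\deg\gamma$ as the number of points, counted with multiplicity, in which a generic line of $\C^2$ meets $\gamma$, we get $\deg\gamma=D=\deg p$; in particular $\deg p\le\deg\gamma$.
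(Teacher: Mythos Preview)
Your proof is correct but follows a genuinely different route from the paper's. The paper invokes an external result (\cite[Theorem A.3]{MR2827010}) asserting that an irreducible complex curve can be cut out by $\lesssim_{n,\deg\gamma}1$ polynomials each of degree $\leq\deg\gamma$; it then observes that $\gamma$, being irreducible and contained in the zero set of one such polynomial $p_1$, must coincide with the zero set of an irreducible factor of $p_1$, whose degree is therefore $\leq\deg p_1\leq\deg\gamma$. Your argument instead stays inside $\C[x,y]$: you use primeness of $I(\gamma)$ and the height-one/UFD principle to show $I(\gamma)=(p)$ directly, and you obtain the degree bound by restricting $p$ to a generic line, which in fact yields the sharper equality $\deg p=\deg\gamma$. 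Your approach is more self-contained and gives slightly more; the paper's is quicker once the cited black-box is granted, and has the advantage that the same citation is reused elsewhere in the paper (e.g.\ in Corollary~\ref{4.1.4}).
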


\begin{proof} Let $\gamma$ be an irreducible planar complex algebraic curve. Then, $\gamma$ is the intersection of the zero sets, in $\C^2$, of some polynomials $p_1$, ..., $p_k \in \C[x,y]$, for $k\lesssim_{\deg \gamma} 1$, of degrees $\leq \deg \gamma$ (see \cite[Theorem A.3]{MR2827010}). 

Now, for all $i=1,...,k$, the zero set of $p_i$ in $\C^2$ contains $\gamma$, and is thus an algebraic set of dimension at least 1; in fact, equal to 1, as otherwise the zero set of $p_i$ would be the whole of $\C^2$ and $p_i$ would be the zero polynomial. Therefore, the zero set of $p_i$ in $\C^2$ is a planar complex algebraic curve containing $\gamma$, for all $i=1,...,k$. Consequently, $\gamma$ is contained, in particular, in the planar complex algebraic curve that is the zero set of $p_1$ in $\C^2$, and, since $\gamma$ is irreducible, it is equal to one of the irreducible components of the zero set of $p_1$, which is the zero set of an irreducible factor of $p_1$. 

Therefore, $\gamma$ is the zero set, in $\C^2$, of a single, irreducible polynomial $p\in \C[x,y]$, of degree $\leq \deg \gamma$.

\end{proof}

We can therefore easily deduce the following.

\begin{corollary} \label{newstuff3} Let $\gamma$ be an irreducible planar real algebraic curve. Then, $\gamma$ is the zero set, in $\R^2$, of a single, irreducible polynomial $p \in \R[x,y]$, of degree $\leq 2\deg \gamma$.
\end{corollary}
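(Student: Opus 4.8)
The plan is to pass to the complexification of $\gamma$, use Lemma~\ref{newstuff2} to represent it by a single irreducible complex polynomial, and then descend back to $\R[x,y]$ by taking a suitable norm, keeping track of degrees at each step. There is essentially no analytic content; the only delicate points are bookkeeping ones, namely that the descent to a real polynomial costs exactly a factor of $2$ in the degree and that one can single out the right irreducible real factor.

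First I would let $\gamma$ be an irreducible planar real algebraic curve. By Lemma~\ref{4.1.1} there is a unique irreducible complex algebraic curve $\gamma_{\C}$ in $\C^2$ containing $\gamma$, and, by the very definition of the degree of an irreducible real algebraic curve, $\deg\gamma=\deg\gamma_{\C}$. By Lemma~\ref{newstuff2}, $\gamma_{\C}$ is the zero set in $\C^2$ of a single irreducible polynomial $q\in\C[x,y]$ with $\deg q\leq\deg\gamma_{\C}=\deg\gamma$. Moreover, by Lemma~\ref{newstuff} (together with the remark following Lemma~\ref{4.1.1}, which for irreducible $\gamma$ identifies $\gamma_{\C}$ with the smallest complex algebraic curve containing $\gamma$), we have $\gamma=\R^2\cap\gamma_{\C}$.

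Next I would descend to a real polynomial. Writing $\bar q$ for the polynomial obtained from $q$ by conjugating all of its coefficients, one checks directly that the product $q\bar q$ has real coefficients, so $q\bar q\in\R[x,y]$, and clearly $\deg(q\bar q)=2\deg q\leq 2\deg\gamma$. For a real point $(x,y)$ one has $(q\bar q)(x,y)=q(x,y)\,\overline{q(x,y)}=|q(x,y)|^{2}$, so the zero set of $q\bar q$ in $\R^2$ equals $\{(x,y)\in\R^2:q(x,y)=0\}=\R^2\cap\gamma_{\C}=\gamma$. Thus $q\bar q$ is a real polynomial of degree at most $2\deg\gamma$ whose real zero set is exactly $\gamma$. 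It need not itself be irreducible, so, letting $\pi_1,\dots,\pi_m$ be the distinct irreducible factors of $q\bar q$ in the unique factorisation domain $\R[x,y]$, I would write $\gamma=\bigcup_{i=1}^{m}\{(x,y)\in\R^2:\pi_i(x,y)=0\}$, a finite union of real algebraic sets each contained in $\gamma$. Since $\gamma$ is irreducible, it must coincide with one of them, say the zero set of $\pi_{i_0}$; then $p:=\pi_{i_0}$ is irreducible in $\R[x,y]$, has zero set $\gamma$ in $\R^2$, and satisfies $\deg p\leq\deg(q\bar q)\leq 2\deg\gamma$, which is exactly what is claimed.

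The only step requiring a little thought is the last one: one must justify that a single irreducible factor of $q\bar q$ recovers all of $\gamma$, and this is precisely where the irreducibility of $\gamma$ as a variety enters. (In fact one could argue further: since $\gamma$ is infinite, $q$ and $\bar q$ cannot define two distinct complex algebraic curves, for otherwise Corollary~\ref{4.1.4} would force their intersection, and hence $\gamma$, to be finite; so $q$ is a scalar multiple of a real polynomial and one even gets $\deg p\leq\deg\gamma$. But the factor-of-$2$ bound above is all that is needed in the sequel, and the shorter argument through $q\bar q$ is cleaner.)
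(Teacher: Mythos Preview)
Your proof is correct and follows essentially the same route as the paper: pass to the complexification $\gamma_{\C}$, apply Lemma~\ref{newstuff2} to get an irreducible $q\in\C[x,y]$ of degree at most $\deg\gamma$, and descend to $\R[x,y]$ via $q\bar q$. The only difference is at the final step: the paper asserts directly that $q\bar q$ is irreducible in $\R[x,y]$ because $q$ is irreducible in $\C[x,y]$, whereas you factor $q\bar q$ over $\R[x,y]$ and use the irreducibility of $\gamma$ as a variety to single out one irreducible factor. Your version is in fact the more careful one: the paper's assertion is not literally true when $q$ happens to be a scalar multiple of a real polynomial (e.g.\ $\gamma$ a real line, $q=y$, $q\bar q=y^2$), and your factoring argument --- as well as your parenthetical remark that $q$ must always be essentially real, yielding even $\deg p\le\deg\gamma$ --- handles this case cleanly.
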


\begin{proof} Let $\gamma_{\C}$ be the (unique) irreducible planar complex algebraic curve containing $\gamma$.

Now, by Lemma \ref{newstuff2}, $\gamma_{\C}$ is the zero set, in $\C^2$, of a single, irreducible polynomial $p \in \C[x,y]$, of degree $\leq \deg \gamma_{\C}\;(=\deg \gamma)$. Thus, by Lemma \ref{newstuff}, $\gamma$ is the zero set of $p$ in $\R^2$, and, since the polynomials $p$ and $\bar{p}$ have the same zero set in $\R^2$, $\gamma$ is the zero set, in $\R^2$, of the polynomial $p\bar{p} \in \R[x,y]$, which is irreducible in $\R[x,y]$, since $p$ is irreducible in $\C[x,y]$. 

Therefore, the statement of the Lemma is proved.

\end{proof}

An immediate consequence of Corollary \ref{newstuff3} is the following.

\begin{corollary}\label{newstuff4} Let $\gamma$ be a planar real algebraic curve. Then, $\gamma$ is the zero set, in $\R^2$, of a single, square-free polynomial $p \in \R[x,y]$, of degree $\leq 2\deg \gamma$.

\end{corollary}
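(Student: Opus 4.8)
The plan is to reduce to the irreducible case already handled in Corollary \ref{newstuff3}: decompose $\gamma$ into irreducible components, apply that corollary to each, and take the product of the polynomials produced. The only points requiring care are that the degrees add up as claimed and that the product is square-free; both follow from facts established earlier in this subsection.

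First I would use that a planar real algebraic curve $\gamma$ has finitely many irreducible components $\gamma_1,\dots,\gamma_m$, each of which is itself an irreducible planar real algebraic curve, that these components are pairwise distinct, and that $\deg\gamma=\sum_{i=1}^m\deg\gamma_i$ (this last identity was recorded in the discussion preceding Corollary \ref{4.1.3}, and relies on Lemma \ref{newstuff}, which also guarantees that distinct real components lie in distinct irreducible complex curves). Applying Corollary \ref{newstuff3} to each $\gamma_i$ yields an irreducible polynomial $p_i\in\R[x,y]$ with $\deg p_i\le 2\deg\gamma_i$ whose zero set in $\R^2$ is exactly $\gamma_i$. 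Setting $p:=p_1\cdots p_m$, the zero set of $p$ in $\R^2$ is $\gamma_1\cup\cdots\cup\gamma_m=\gamma$, and $\deg p=\sum_{i=1}^m\deg p_i\le 2\sum_{i=1}^m\deg\gamma_i=2\deg\gamma$, which is the desired degree bound.

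The remaining point is square-freeness of $p$. Since each $p_i$ is irreducible, it is enough to check that the $p_i$ are pairwise non-associate in $\R[x,y]$; and indeed, if $p_i$ and $p_j$ were scalar multiples of one another they would have the same zero set, forcing $\gamma_i=\gamma_j$, contrary to the components being distinct. Hence $p$ is a product of pairwise non-associate irreducible polynomials, so it is square-free, and the corollary follows. I do not anticipate a genuine obstacle here: the single delicate point is that distinct irreducible components really do give distinct (hence non-associate) irreducible polynomials, which is precisely what the zero-set description in Corollary \ref{newstuff3}, together with Lemma \ref{newstuff}, provides.
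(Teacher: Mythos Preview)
Your proposal is correct and is exactly the natural argument the paper has in mind: the paper gives no proof at all, simply declaring the result ``an immediate consequence of Corollary \ref{newstuff3}'', and your decomposition into irreducible components followed by taking the product of the polynomials from Corollary \ref{newstuff3} is precisely how one makes that immediacy explicit. Your invocation of the degree additivity (from the discussion preceding Corollary \ref{4.1.3}) and the square-freeness argument via pairwise non-associate irreducible factors are the right details to supply.
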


We can now bound from above the number of times a planar real algebraic curve crosses itself.

\begin{lemma} \label{newstuff5}Let $\gamma$ be a planar real algebraic curve. Then, $\gamma$ crosses itself at most $4 (\deg \gamma)^2$ times.
\end{lemma}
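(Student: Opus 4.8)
The plan is to use Corollary~\ref{newstuff4}, which presents $\gamma$ as the zero set in $\R^2$ of a single square-free polynomial $p\in\R[x,y]$ with $\deg p\le 2\deg\gamma$, and to reduce the counting of self-crossings to counting singular points of $Z(p)$. First I would observe that if $x_0\in Z(p)$ is a point at which $\gamma$ crosses itself, then necessarily $p(x_0)=\partial_x p(x_0)=\partial_y p(x_0)=0$: for if $\nabla p(x_0)\ne 0$, the implicit function theorem makes $Z(p)$ a smooth $1$-manifold near $x_0$, hence locally homeomorphic to an open interval, and removing $x_0$ from such a connected neighbourhood leaves two components, whereas removing the crossing point from a union of at least two intersecting lines leaves at least four; so $x_0$ could not be a self-crossing point. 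Hence it suffices to bound the number of singular points of $Z(p)$ by $4(\deg\gamma)^2$.

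Next I would factor $p=p_1\cdots p_k$ into pairwise non-associate irreducible polynomials of $\R[x,y]$ (no repetitions, since $p$ is square-free), and sort each singular point $x_0$ into one of two families. If $x_0$ lies on $Z(p_i)\cap Z(p_j)$ for some $i\ne j$, it goes into the first family. Otherwise $x_0$ lies on exactly one $Z(p_i)$; writing $\nabla p=\sum_m(\nabla p_m)\prod_{l\ne m}p_l$ and using that $\prod_{l\ne i}p_l(x_0)\ne 0$, we get $\nabla p_i(x_0)=0$, so $x_0$ is a singular point of $Z(p_i)$, and it goes into the second family.

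To count the first family: two distinct irreducible polynomials of $\R[x,y]$ are coprime in $\R[x,y]$, hence coprime in $\C[x,y]$ (the gcd does not change under field extension), so no irreducible component of the complex curve $Z_{\C}(p_j)$ is contained in $Z_{\C}(p_i)$; applying B\'ezout's theorem (Theorem~\ref{4.1.2}) to each such component and summing their degrees, $Z(p_i)$ and $Z(p_j)$ share at most $\deg p_i\cdot\deg p_j$ points. To count the second family: a non-constant $p_i$ involves some variable, say $x$, so $\partial_x p_i$ is non-zero and of smaller degree than the irreducible $p_i$, hence coprime to $p_i$ over $\R$ and so over $\C$; B\'ezout again bounds the common complex zeros of $p_i$ and $\partial_x p_i$ --- which contain all real singular points of $Z(p_i)$ --- by $\deg p_i(\deg p_i-1)\le(\deg p_i)^2$. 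Summing over the two families,
\[
\#\{\text{self-crossing points of }\gamma\}\ \le\ \sum_{i<j}\deg p_i\,\deg p_j+\sum_i(\deg p_i)^2\ \le\ \Big(\sum_i\deg p_i\Big)^2=(\deg p)^2\le(2\deg\gamma)^2=4(\deg\gamma)^2 .
\]

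The two B\'ezout estimates and the combinatorial inequality are routine; the one point that needs care is the very first reduction --- that a self-crossing forces $\nabla p$ to vanish --- and this is precisely where square-freeness of $p$ (Corollary~\ref{newstuff4}) is indispensable, since otherwise $\nabla p$ would vanish identically along entire components of $Z(p)$ and could no longer be used to isolate the singular points.
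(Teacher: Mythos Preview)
Your proof is correct and follows essentially the same route as the paper: invoke Corollary~\ref{newstuff4} to write $\gamma=Z(p)$ with $p$ square-free and $\deg p\le 2\deg\gamma$, observe that a self-crossing forces $\nabla p$ to vanish (the paper phrases this simply as ``otherwise $\gamma$ would be a manifold locally around $x$''), and then bound the common zeros of $p$ and $\nabla p$ by $(\deg p)^2$ via B\'ezout and the factorisation of $p$. The only cosmetic difference is in the bookkeeping of the B\'ezout step: the paper pairs each irreducible factor $p_i$ with a single partial derivative $g_i$ of the full polynomial $p$ not divisible by $p_i$, obtaining $\sum_i\deg p_i\cdot\deg p\le(\deg p)^2$, whereas you split singular points into intersections $Z(p_i)\cap Z(p_j)$ and genuine singularities of each $Z(p_i)$, obtaining $\sum_{i<j}\deg p_i\deg p_j+\sum_i(\deg p_i)^2\le(\deg p)^2$; both organisations are standard and yield the same bound.
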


\begin{proof} By Corollary \ref{newstuff4}, $\gamma$ is the zero set, in $\R^2$, of a single, square-free polynomial $p \in \R[x,y]$, of degree $\leq 2\deg \gamma$. Since $p$ is square-free, $p$ and $\nabla{p}$ do not have a common factor, so, by B\'ezout's theorem, $p$ and $\nabla{p}$ have at most $(\deg p)^2 \leq (2\deg \gamma)^2$ common roots. 

Indeed, if $p=p_1\cdots p_k$, where $p_1$, ..., $p_k \in \R[x,y]$ are irreducible polynomials, then each common root of $p$ and $\nabla{p}$ is a common root of an irreducible factor $p_i$ of $p$, for some $i \in \{1,...,k\}$, and a polynomial $g_i \in \Big\{\frac{\partial p}{\partial x}, \frac{\partial p}{\partial y}\Big\}$, which does not have $p_i$ as a factor. Therefore, the number of common roots of $p$ and $\nabla{p}$ is equal to at most $\sum_{i=1,...,k}r_i$, where, for each $i\in \{1,...,k\}$, $r_i$ is the number of common roots of $p_i$ and $g_i$. However, for all $i\in\{1,...,k\}$, the polynomials $p_i$ and $g_i \in \R[x,y]$ do not have a common factor, and thus, by B\'ezout's theorem, $r_i \leq \deg p_i \cdot \deg g_i \leq \deg p_i \cdot d$. Therefore, the number of common roots of $p$ and $\nabla{p}$ is $\leq \sum_{i=1,...,k}\deg p_i \cdot d\leq d^2$.

But if $\gamma$ crosses itself at a point $x$, then $x$ is a common root of $p$ and $\nabla{p}$, because otherwise $\gamma$ would be a manifold locally around $x$. So, $\gamma$ crosses itself at most $4(\deg \gamma)^2$ times.

\end{proof}

Lemma \ref{newstuff5} immediately gives an upper bound on the number of times a real algebraic curve in $\R^3$ crosses itself.

\begin{lemma}\label{4.1.7} Let $\gamma$ be a real algebraic curve in $\R^3$. Then, $\gamma$ crosses itself at most $4(\deg \overline{\pi(\gamma)})^2$ times, where $\overline{\pi(\gamma)}$ the smallest planar real algebraic curve containing the projection $\pi(\gamma)$ of the curve $\gamma$ on a generic plane (i.e. the curve that constitutes the Zariski closure of $\pi(\gamma)$). \end{lemma}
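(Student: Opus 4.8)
The plan is to pass, via a generic linear projection $\pi\colon\R^3\to H\cong\R^2$, from self-crossings of $\gamma$ in $\R^3$ to self-crossings of the planar real algebraic curve $\overline{\pi(\gamma)}$, and then invoke Lemma \ref{newstuff5}. First I would record that $\gamma$ has only finitely many self-crossing points; denote this set by $C$. Indeed, a self-crossing point is in particular a point at which $\gamma$ is not locally a $1$-manifold, i.e. a singular point of $\gamma$; by Lemma \ref{4.1.1} each irreducible component of $\gamma$ lies in the unique irreducible complex curve containing it, and a self-crossing point of the component is a singular point of that complex curve, of which there are finitely many. Consequently, at each $x\in C$ a basic neighbourhood of $x$ in $\gamma$ is a finite union of at least four branches (two intersecting lines), each homeomorphic to a semi-open segment with endpoint $x$ and each carrying a well-defined tangent direction (its tangent-cone, equivalently Puiseux, direction).

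Next I would fix the projection direction $e\in S^2$ (so $H=e^{\perp}$) subject to finitely many genericity requirements: (i) $e$ is generic in the sense of Lemma \ref{4.1.6}, so that $\overline{\pi(\gamma)}$, the Zariski closure of $\pi(\gamma)$, is a planar real algebraic curve of degree at most $C_{\deg\gamma}$; (ii) $e$ is not parallel to any of the finitely many chords of $C$, so that $\pi$ is injective on $C$; (iii) for every $x\in C$, $e$ is not parallel to the tangent line of any branch of $\gamma$ at $x$, and $e$ does not lie in the $2$-plane spanned by the tangent directions of any two distinct branches of $\gamma$ at $x$; (iv) $e$ is not parallel to any line contained in $\gamma$. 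Each of (i)--(iv) excludes from $S^2$ only a finite union of great circles and points, so a generic $e$ satisfies all of them simultaneously.

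With such an $e$ fixed, I would verify that $\pi(x)$ is a self-crossing point of $\overline{\pi(\gamma)}$ for every $x\in C$. By (iv) and (iii), $\pi$ carries each of the at least four branches of $\gamma$ at $x$ to an arc of $\pi(\gamma)$ issuing from $\pi(x)$ with a well-defined nonzero tangent direction $\pi(v)$; by (iii), distinct branches are sent to arcs with distinct tangent directions, so in particular their images are distinct near $\pi(x)$. Hence a basic neighbourhood of $\pi(x)$ in $\pi(\gamma)$ is homeomorphic to at least two intersecting lines. Finally, $\pi(\gamma)$ is a semi-algebraic curve whose Zariski closure is $\overline{\pi(\gamma)}$, so $\overline{\pi(\gamma)}\setminus\pi(\gamma)$ is a finite set, which does not contain $\pi(x)$; shrinking the neighbourhood so as to avoid that finite set, $\pi(\gamma)$ and $\overline{\pi(\gamma)}$ have the same germ at $\pi(x)$, whence $\pi(x)$ is a self-crossing point of $\overline{\pi(\gamma)}$. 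Since by (ii) the assignment $x\mapsto\pi(x)$ is injective on $C$, it follows that $|C|$ is at most the number of self-crossing points of $\overline{\pi(\gamma)}$, which by Lemma \ref{newstuff5} is at most $4(\deg\overline{\pi(\gamma)})^2$; this is exactly the claimed bound.

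I expect the main obstacle to be the third step together with the genericity bookkeeping of the second: one must ensure that, after projecting, none of the branches meeting at a self-crossing point becomes tangent to the projection direction (so each survives as a genuine arc), that no two of them are identified with each other (so the crossing is not accidentally resolved), and that the projected curve and its Zariski closure share a germ at the image point; encoding all of these, uniformly over the finitely many points of $C$, as finitely many conditions on $e$ is the delicate part of the argument.
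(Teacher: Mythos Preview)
Your approach is exactly the paper's: project generically to a plane, observe that self-crossings of $\gamma$ yield self-crossings of $\overline{\pi(\gamma)}$, and invoke Lemma~\ref{newstuff5}. The paper's proof is a single sentence (``obviously, $\gamma$ crosses itself at most as many times as $\overline{\pi(\gamma)}$ crosses itself''); you have supplied the genericity bookkeeping that the paper suppresses.

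One point deserves correction. Your claim that $\overline{\pi(\gamma)}\setminus\pi(\gamma)$ is a finite set is false in general: take for instance $\gamma=\{y=x^2,\ z^2=x\}\subset\R^3$, whose real locus lives over $x\ge 0$; projecting to the $xy$-plane gives the half-parabola $\{y=x^2,\ x\ge 0\}$, whose Zariski closure is the full parabola, so the difference is an entire arc. Fortunately this does not damage your argument. You do not need the germs of $\pi(\gamma)$ and $\overline{\pi(\gamma)}$ to coincide at $\pi(x)$; you only need the containment $\pi(\gamma)\subseteq\overline{\pi(\gamma)}$. Since you have already produced, inside $\pi(\gamma)$, at least four branch-arcs through $\pi(x)$ with pairwise distinct tangent directions, any small neighbourhood of $\pi(x)$ in the larger set $\overline{\pi(\gamma)}$ also contains these arcs, and hence cannot be a $1$-manifold there. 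Thus $\pi(x)$ is a self-crossing of $\overline{\pi(\gamma)}$, and the rest of your argument goes through unchanged.
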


\begin{proof} Obviously, $\gamma$ crosses itself at most as many times as $\overline{\pi(\gamma)}$ crosses itself, thus, by Lemma  \ref{newstuff5}, at most $4(\deg \overline{\pi(\gamma)})^2$ times.

\end{proof}

We are now ready to establish an analogue of the Szemer\'edi-Trotter theorem for real algebraic curves in $\R^3$. Indeed, the following is known.

\begin{theorem}\emph{\textbf{ (Kaplan, Matou\v{s}ek, Sharir, \cite[Theorem 4.1]{KMS})}}\label{4.1.8} Let $b$, $k$, $C$ be positive constants. Also, let $P$ be a finite set of points in $\R^2$ and $\Gamma$ a finite set of planar real algebraic curves, such that

\emph{(i)} every $\gamma \in \Gamma$ has degree at most $b$, and \newline
\emph{(ii)} for every $k$ distinct points in $\R^2$, there exist at most $C$ distinct curves in $\Gamma$ passing through all of them.

Then, \begin{displaymath} I_{P,\Gamma} \lesssim_{b,k,C} |P|^{k/(2k-1)}|\Gamma|^{(2k-2)/(2k-1)}+|P|+|\Gamma|. \end{displaymath} \end{theorem}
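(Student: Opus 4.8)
I would prove this following \cite{KMS}: run the Guth--Katz polynomial partitioning of Theorem \ref{2.1.3} on the point set $P$, and control the interaction between the curves and the partitioning curve by the B\'ezout estimates of Corollaries \ref{4.1.3} and \ref{4.1.4}. Write $m=|P|$ and $n=|\Gamma|$. Two harmless normalisations come first. Decomposing every curve of $\Gamma$ into its (at most $b$) irreducible components multiplies $n$ by a factor $\le b$, replaces $C$ by $bC$ in hypothesis (ii), and only increases $I_{P,\Gamma}$, so it is enough to treat \emph{irreducible} real algebraic curves of degree $\le b$. Also, since hypothesis (ii) says the incidence bipartite graph contains no $K_{k,\,C+1}$ with the $k$-side among the points, the K\H{o}v\'ari--S\'os--Tur\'an theorem already gives $I_{P,\Gamma}\lesssim_{k,C}mn^{1-1/k}+n\lesssim_{k,C}n$ as soon as $n\gtrsim m^{k}$; so the work is in the range $n\lesssim m^{k}$, and the whole argument is an induction on $m+n$.

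\textbf{The partition and the cellular term.} Fix $d$ of order $(m^{k}/n)^{1/(2k-1)}$, truncated so that $2\le d\le m^{1/2}$ (this is the honest value when $m^{1/2}\lesssim n\lesssim m^{k}$, and is pinned at $m^{1/2}$ when $n\lesssim m^{1/2}$). Theorem \ref{2.1.3} produces a nonzero $p\in\R[x,y]$ of degree $\le d$ whose zero set $Z$ cuts $\R^{2}$ into $\sim d^{2}$ open cells, each meeting $\lesssim m/d^{2}$ points of $P$; split $I_{P,\Gamma}=I_{\mathrm{cells}}+I_{Z}$ according to whether the incident point lies in an open cell or on $Z$. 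By Corollary \ref{4.1.3} a curve of $\Gamma$ not contained in $Z$ meets $Z$ in $\le bd$ points, hence enters $\lesssim_{b}d$ cells, so $\sum_{\Delta}n_{\Delta}\lesssim_{b}nd$, where for a cell $\Delta$ one writes $m_{\Delta}\le m/d^{2}$ for its number of points of $P$ and $n_{\Delta}$ for its number of curves of $\Gamma$. Each $\Delta$ is a strictly smaller instance ($m_{\Delta}<m$ as $d\ge 2$), so by induction $I_{\Delta}\lesssim_{b,k,C}m_{\Delta}^{k/(2k-1)}n_{\Delta}^{(2k-2)/(2k-1)}+m_{\Delta}+n_{\Delta}$; summing, using the uniform bound $m_{\Delta}\le m/d^{2}$, the concavity of $t\mapsto t^{(2k-2)/(2k-1)}$ to estimate $\sum_{\Delta}n_{\Delta}^{(2k-2)/(2k-1)}\lesssim (d^{2})^{1/(2k-1)}(nd)^{(2k-2)/(2k-1)}$, and $\sum_{\Delta}m_{\Delta}\le m$, the powers of $d$ cancel in the leading term, leaving $I_{\mathrm{cells}}\lesssim_{b,k,C}m^{k/(2k-1)}n^{(2k-2)/(2k-1)}+m+nd$, and the choice of $d$ makes $nd\lesssim m^{k/(2k-1)}n^{(2k-2)/(2k-1)}+m$.

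\textbf{The term on $Z$.} Curves of $\Gamma$ not contained in $Z$ contribute $\lesssim_{b}nd$ incidences on $Z$ as well, again by Corollary \ref{4.1.3}. For the curves of $\Gamma$ that \emph{are} contained in $Z$: since these curves are irreducible and $Z$ has degree $\le d$, each is an irreducible component of $Z$, so there are $\lesssim d$ of them; call this subfamily $\Gamma_{Z}$. If $\Gamma_{Z}$ is a proper subfamily of $\Gamma$ — which I expect precisely when $n\gtrsim m^{1/2}$, so that $d<n$ — I would apply the induction hypothesis to $(P\cap Z,\Gamma_{Z})$ to bound its incidences by $\lesssim_{b,k,C}m^{k/(2k-1)}d^{(2k-2)/(2k-1)}+m+d$, which is $\lesssim m^{k/(2k-1)}n^{(2k-2)/(2k-1)}+m$ since $d\lesssim n$ there. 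The remaining case is that essentially all points and all curves lie on $Z$ (possible only when $n\lesssim m^{1/2}$, since $Z$ then has $\lesssim m^{1/2}$ components), where the induction does not reduce; here I would instead use that two distinct irreducible real curves of degree $\le b$ meet in $\lesssim_{b}1$ points (Lemma \ref{4.1.1} together with Corollary \ref{4.1.4}), whence $\sum_{x\in P}\binom{t_{x}}{2}\lesssim_{b}n^{2}+I_{P,\Gamma}$ with $t_{x}$ the number of curves of $\Gamma$ through $x$, and Cauchy--Schwarz gives $I_{P,\Gamma}\lesssim_{b}m^{1/2}n+m$, which is $\lesssim m^{k/(2k-1)}n^{(2k-2)/(2k-1)}+m$ exactly when $n\lesssim m^{1/2}$. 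Collecting the cellular and on-$Z$ estimates closes the induction.

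\textbf{The main obstacle.} The entire difficulty lies in the on-$Z$ analysis. One must, using the real-versus-complex algebraic geometry built up above (Lemma \ref{4.1.1} and Corollary \ref{4.1.4}, together with the structure of $Z$ as a low-degree real curve), count correctly how many curves of $\Gamma$ can lie inside $Z$ and how two curves can intersect; check that the decomposition into irreducible components preserves hypothesis (ii) with a constant still depending only on $b,k,C$; and verify that the single truncated choice $d\sim(m^{k}/n)^{1/(2k-1)}$ simultaneously guarantees $d\ge 2$, $d^{2}\lesssim m$, $nd\lesssim$ (main term) and $d\lesssim n$ — which is exactly what makes the three regimes $n\gtrsim m^{k}$, $m^{1/2}\lesssim n\lesssim m^{k}$ and $n\lesssim m^{1/2}$ dovetail.
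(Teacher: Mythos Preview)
The paper does not give its own proof of this theorem: it is quoted verbatim as \cite[Theorem 4.1]{KMS} and then used as a black box in the proof of Lemma~\ref{4.1.9}. So there is no ``paper's proof'' to compare your attempt against.

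That said, your sketch is essentially the argument of \cite{KMS} that the paper is citing: polynomial partitioning on $P$ with $d\sim (m^k/n)^{1/(2k-1)}$, B\'ezout to bound how many cells a curve meets and how many curves can lie in $Z$, K\H{o}v\'ari--S\'os--Tur\'an (or the elementary Cauchy--Schwarz count you give) to handle the extreme regimes, and induction on the size of the configuration. The outline is sound. A couple of small points worth tightening if you want to turn this into a full proof: the claim that an irreducible real curve $\gamma\subset Z$ is ``an irreducible component of $Z$'' needs the passage through $\gamma_{\C}$ (as in Lemma~\ref{4.1.1}) to be watertight, since irreducibility over $\R$ and over $\C$ differ; and the bound ``$\gamma$ enters $\lesssim_b d$ cells'' requires, beyond the $\le bd$ intersection points with $Z$, the fact that $\gamma$ itself has $O_b(1)$ connected components (Lemma~\ref{4.1.15} or Harnack), which you use implicitly.
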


Combining Theorem \ref{4.1.8} with Lemmas \ref{4.1.6} and \ref{4.1.7}, we deduce the following fact on point--real algebraic curve incidences in $\R^3$.

\begin{lemma}\label{4.1.9} Let $b$ be a positive constant. Also, let $\Gamma$ be a finite set of real algebraic curves in $\R^3$, each of degree at most $b$, and $P$ a finite set of points in $\R^3$. Then, there exists a natural number $D_b \geq b^2+1$, depending only on $b$, such that 

\emph{(i)} $I'_{P,\Gamma} \lesssim_b |P|^{D_b/(2D_b-1)}|\Gamma|^{(2D_b-2)/(2D_b-1)}+|P|+|\Gamma|$, where $I'_{P,\Gamma}$ denotes the number of all pairs $(p,\gamma)$ such that $p\in P$, $\gamma \in \Gamma$, $p\in \gamma$ and $p$ is not an isolated point of $\gamma$, and

\emph{(ii)} if there exist $S$ points in $\R^3$, such that each lies in at least $k$ curves of $\Gamma$ which do not have the point as an isolated point, where $k \geq 2$, then $S\lesssim_b {|\Gamma}| ^2/k^{(2D_b-1)/(D_b-1)} + |\Gamma|/k$. \end{lemma}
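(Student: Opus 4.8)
The plan is to reduce, by projecting onto a generic plane, to a point--curve incidence problem in $\R^2$ to which the Kaplan--Matou\v{s}ek--Sharir bound (Theorem \ref{4.1.8}) applies. I would first assume, as one may, that the curves in $\Gamma$ are distinct irreducible real algebraic curves: passing to the set of all irreducible components of the members of $\Gamma$ multiplies the number of curves by a factor $\lesssim_b 1$, keeps all degrees $\le b$, and (once distinct members of $\Gamma$ are taken to share no common component) does not decrease $I'_{P,\Gamma}$ or the point counts in (ii). Then fix a plane $H\cong\R^2$ and an orthogonal projection $\pi\colon\R^3\to H$ along a direction generic enough that simultaneously (a) $\pi$ is injective on the finite set $P$, (b) Lemma \ref{4.1.6} holds for every $\gamma\in\Gamma$, so that $\overline{\pi(\gamma)}$ is an irreducible planar real algebraic curve of degree at most $C_b:=\max_{1\le d\le b}C_d$, and (c) $\gamma\mapsto\overline{\pi(\gamma)}$ is injective on $\Gamma$; each condition excludes only a lower-dimensional set of directions, so a suitable one exists. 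Put $\mathcal C:=\{\overline{\pi(\gamma)}:\gamma\in\Gamma\}$, a family of $|\Gamma|$ distinct irreducible planar real algebraic curves of degree $\le C_b$.

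Next I would verify the hypotheses of Theorem \ref{4.1.8} for $(\pi(P),\mathcal C)$ with degree bound $C_b$. Condition (i) is immediate. For condition (ii): by Lemma \ref{4.1.1} and Corollary \ref{4.1.4}, two distinct curves of $\mathcal C$, being contained in distinct irreducible complex curves of degree $\le C_b$ in $\C^2$, meet in at most $C_b^2$ points, so any $C_b^2+1$ distinct points of $\R^2$ lie on at most one curve of $\mathcal C$. Set $D_b:=\max\{C_b^2,\lceil b^2\rceil\}+1\ge b^2+1$; then condition (ii) above holds a fortiori with its parameter equal to $D_b$ and with ``at most one'' curve, so Theorem \ref{4.1.8} gives
\[ I_{\pi(P),\mathcal C}\lesssim_b |\pi(P)|^{\frac{D_b}{2D_b-1}}\,|\mathcal C|^{\frac{2D_b-2}{2D_b-1}}+|\pi(P)|+|\mathcal C|. \]
Because $\pi$ is injective on $P$ and $\gamma\mapsto\overline{\pi(\gamma)}$ is injective on $\Gamma$, the map $(p,\gamma)\mapsto(\pi(p),\overline{\pi(\gamma)})$ injects the pairs counted by $I'_{P,\Gamma}$ into the incidences counted by $I_{\pi(P),\mathcal C}$; since $|\pi(P)|=|P|$ and $|\mathcal C|=|\Gamma|$, this is precisely part (i).

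Part (ii) then follows from (i) by the standard Szemer\'edi--Trotter-type argument. Taking $P$ to be a set of $S$ points each lying on at least $k$ curves of $\Gamma$ that do not have it as an isolated point, we get $Sk\le I'_{P,\Gamma}$, hence by (i)
\[ Sk\lesssim_b S^{\frac{D_b}{2D_b-1}}\,|\Gamma|^{\frac{2D_b-2}{2D_b-1}}+S+|\Gamma|. \]
If $k$ exceeds twice the implied constant the middle term is absorbed on the left, and then either the first term dominates --- so $S^{\frac{D_b-1}{2D_b-1}}k\lesssim_b|\Gamma|^{\frac{2D_b-2}{2D_b-1}}$, i.e.\ $S\lesssim_b|\Gamma|^2 k^{-\frac{2D_b-1}{D_b-1}}$ (using $\frac{2D_b-2}{D_b-1}=2$) --- or the term $|\Gamma|$ dominates, so $S\lesssim_b|\Gamma|/k$. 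If instead $k\lesssim_b 1$, then since any two distinct curves of $\Gamma$ meet in $\lesssim_b 1$ points (Lemma \ref{4.1.1} and Corollary \ref{4.1.4} once more) the number of points lying on at least two of them is $\lesssim_b|\Gamma|^2$, whence $S\lesssim_b|\Gamma|^2\lesssim_b|\Gamma|^2 k^{-\frac{2D_b-1}{D_b-1}}$. In every case $S\lesssim_b|\Gamma|^2 k^{-\frac{2D_b-1}{D_b-1}}+|\Gamma|/k$.

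The step I expect to be most delicate is securing a single generic projection that at once separates the points of $P$, preserves Lemma \ref{4.1.6} with the uniform degree bound $C_b$ across all of $\Gamma$, and renders $\gamma\mapsto\overline{\pi(\gamma)}$ injective --- and, relatedly, pinning down Theorem \ref{4.1.8}'s ``any $k$ points lie on at most $C$ curves'' hypothesis for the projected family with constants depending only on $b$, so that the exponents come out exactly as $\frac{D_b}{2D_b-1}$ and $\frac{2D_b-2}{2D_b-1}$ with $D_b\ge b^2+1$. The preliminary reduction to irreducible curves, which must not sever the correspondence between incidences upstairs and downstairs, also warrants attention.
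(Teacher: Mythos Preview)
Your approach is essentially the paper's: project onto a generic plane, pass to irreducible planar curves of degree $\le C_b$, and apply Theorem~\ref{4.1.8} with parameter $D_b=C_b^2+1$, then deduce (ii) from (i) by the standard dichotomy. The only cosmetic differences are that the paper passes to irreducible components \emph{after} projecting (writing $I'_{P,\Gamma}\le I_{\pi(P),\,Irr(\overline{\pi(\Gamma)})}+4C_b^2\,|Irr(\overline{\pi(\Gamma)})|$) rather than reducing $\Gamma$ to irreducibles first, and is less explicit than you about the genericity conditions (a)--(c); your caveat that the reduction to irreducibles ``must not sever the correspondence between incidences upstairs and downstairs'' is well placed, as both arguments implicitly need distinct members of $\Gamma$ not to share a component.
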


\begin{proof} Let $\pi :\R^3 \rightarrow H$ be the projection map of $\R^3$ on a generic plane $H \simeq \R^2$. By Lemma \ref{4.1.6} we know that, for all $\gamma \in \Gamma$, $\pi(\gamma)$ is contained in a planar real algebraic curve $\overline{\pi(\gamma)}$ of degree at most $C_b$, where $C_b \geq b$ is an integer depending only on b. Thus, if $\pi(\Gamma):=\{\pi(\gamma):\gamma \in \Gamma\}$ and $Irr(\overline{\pi(\Gamma)}):=\{$1-dimensional irreducible components of $\overline{\pi(\gamma)}: \gamma \in \Gamma\}$, we have that
\begin{displaymath} I'_{P,\Gamma} \leq I'_{\pi(P), \pi(\Gamma)} \leq I_{\pi(P), Irr(\overline{\pi(\Gamma)})} + 4{C_b}^2\cdot |Irr(\overline{\pi(\Gamma)})|,\end{displaymath} as, by Lemma \ref{newstuff5}, each curve in $Irr(\overline{\pi(\Gamma)})$ crosses itself at most $4\cdot (\deg \overline{\pi(\gamma)})^2$ $\leq4 C_b^2$ times. In addition, by B\'ezout's theorem, for each $C_b^2 +1$ distinct points of $\R^2$ there exists at most 1 curve in $Irr(\overline{\pi(\Gamma)})$ passing through all of them. The application, therefore, of Theorem \ref{4.1.8} for $k=D_b:=C_b^2+1$, the set $\pi(P)$ of points and the set $Irr(\overline{\pi(\Gamma)})$ of planar real algebraic curves, whose cardinality is obviously $\leq C_b\cdot|\Gamma|$, completes the proof of (i), while (ii) is an immediate corollary of (i).

\end{proof}

For the analysis that follows, we introduce the notion of the resultant of two polynomials, a useful tool for deducing whether two polynomials have a common factor (for details, see \cite[Chapter 3]{MR2122859} or \cite{Guth_Katz_2008}).

More particularly, let $f$, $g \in \C[x]$, of positive degrees $l$ and $m$, respectively, with
\begin{displaymath}f(x)=a_lx^l+a_{l-1}x^{l-1}+...+a_0
\end{displaymath}
and
\begin{displaymath}g(x)=b_mx^m+b_{m-1}x^{m-1}+...+b_0.
\end{displaymath}

We define the resultant $Res(f,g)$ of $f$ and $g$ as the determinant of the $(l+m)  \times  (l+m)$ matrix $(c_{ij})$, where $c_{ij} =a_{j-i}$ if $1 \leq i \leq m$ and $i \leq j \leq i+l$,
$c_{ij}= b_{j-i +m}$ if $m+1 \leq i \leq m+l$ and $i-m \leq j \leq i-m+l$, and $c_{ij}=0$ otherwise.

Note that the columns of the matrix $(c_{ij})$ represent the coefficients of the polynomial
$f$ multiplied by $x^j$, where $j$ runs from 0 to $m-1$, and the
coefficents of the polynomial $g$ multiplied by $x^k$,  where $k$ runs
from 0 to $l-1$. Therefore, the resultant of $f$ and $g$ is 0 if and only if this set of polynomials is linearly independent. This leads to a connection between the existence of a common factor of two polynomials and the value of their resultant.

Indeed, let $f$, $g \in {\C}[x_1,..., x_n]$ be polynomials of positive degree in $x_1$. Viewing $f$ and $g$ as polynomials in $x_1$ with coefficients in $\C[x_2..., x_n]$, we define the resultant of $f$ and $g$ with respect to $x_1$ as the polynomial $Res(f,g;x_1)\in \C[x_2,...,x_n]$.

\begin{theorem} \label{factor} Let $f$, $g \in {\C}[x_1, ... , x_n]$ be polynomials of positive degree in $x_1$. Then, $f$ and $g$ have a common factor of positive degree in $x_1$ if and only if $Res(f,g;x_1)$ is the zero polynomial.
\end{theorem}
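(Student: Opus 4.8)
The plan is to reduce everything to the one-variable theory of resultants over the fraction field $K:=\C(x_2,\dots,x_n)$ of the unique factorisation domain $R:=\C[x_2,\dots,x_n]$. Write $l:=\deg_{x_1}f\ge 1$ and $m:=\deg_{x_1}g\ge 1$, and note that $\C[x_1,\dots,x_n]=R[x_1]$, and that the Sylvester-type matrix $M=(c_{ij})$ defining $\mathrm{Res}(f,g;x_1)$ has entries in $R$; since $R\subseteq K$, the determinant is the same whether $M$ is viewed over $R$ or over $K$, so $\mathrm{Res}(f,g;x_1)=0$ in $R$ if and only if $\det M=0$ in $K$. The asserted equivalence will then be obtained from two facts about $M$: an adjugate identity (for the easy direction) and the characterisation of $\det M=0$ as linear dependence of the columns (for the hard direction), together with Gauss's lemma to pass between $K[x_1]$ and $R[x_1]$; the classical background is in \cite[Chapter 3]{MR2122859}.

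First I would record the standard resultant identity. Applying $M\cdot\mathrm{adj}(M)=(\det M)\,I$ (which holds over any commutative ring, here $R$) to the basis vector corresponding to the constant term produces polynomials $A,B\in R[x_1]$ with $\deg_{x_1}A<m$, $\deg_{x_1}B<l$ and $Af+Bg=\mathrm{Res}(f,g;x_1)$ in $R[x_1]$. This immediately yields one implication: if $h\in\C[x_1,\dots,x_n]=R[x_1]$ has $\deg_{x_1}h\ge 1$ and divides both $f$ and $g$, then $h$ divides $Af+Bg=\mathrm{Res}(f,g;x_1)\in R$; but a polynomial of positive $x_1$-degree cannot divide a non-zero element of $R$ inside $R[x_1]$ (compare $x_1$-degrees, using that $R$ is a domain), so $\mathrm{Res}(f,g;x_1)=0$.

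For the converse, suppose $\mathrm{Res}(f,g;x_1)=0$, so $\det M=0$ over $K$ and the $l+m$ polynomials $x_1^{\,j}f$ $(0\le j<m)$ and $x_1^{\,k}g$ $(0\le k<l)$ are $K$-linearly dependent as vectors in the $(l+m)$-dimensional $K$-space of polynomials of $x_1$-degree $<l+m$. This gives $A,B\in K[x_1]$, not both zero, with $\deg_{x_1}A<m$, $\deg_{x_1}B<l$ and $Af+Bg=0$; since $f,g\ne 0$ and $K[x_1]$ is a domain, both $A$ and $B$ are in fact non-zero. Working in the principal ideal domain $K[x_1]$, write $f=h f_1$ and $g=h g_1$ with $h=\gcd(f,g)$ and $\gcd(f_1,g_1)=1$; cancelling $h$ from $Af=-Bg$ gives $Af_1=-Bg_1$, hence $g_1\mid A$, and comparing $x_1$-degrees, $\deg_{x_1}g_1=m-\deg_{x_1}h\le\deg_{x_1}A\le m-1$ forces $\deg_{x_1}h\ge 1$. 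Thus $f$ and $g$ have a common factor $h$ of positive degree in $x_1$ over $K[x_1]$.

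Finally I would descend this common factor from $K[x_1]$ to $R[x_1]=\C[x_1,\dots,x_n]$. Clearing denominators and extracting content, replace $h$ by a primitive polynomial $h^{*}\in R[x_1]$ that is an associate of $h$ in $K[x_1]$, so $\deg_{x_1}h^{*}=\deg_{x_1}h\ge 1$; since $h^{*}$ is primitive and divides $f$ (respectively $g$) in $K[x_1]$, Gauss's lemma gives $h^{*}\mid f$ and $h^{*}\mid g$ in $R[x_1]$, i.e.\ $f$ and $g$ have a common factor of positive degree in $x_1$ in $\C[x_1,\dots,x_n]$. The only genuinely delicate point is this last step: ensuring that passing between factorisations over $K[x_1]$ and over $R[x_1]$ neither loses the common factor nor lowers its $x_1$-degree, which is exactly what the content/primitive-part machinery of Gauss's lemma handles; everything else is the classical linear algebra of the Sylvester matrix.
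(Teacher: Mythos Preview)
Your argument is correct. The paper does not actually supply a proof of this theorem: it simply records that the statement is \S 3.6 Proposition~1~(ii) in \cite{Cox+Others/1991/Ideals} and moves on, so there is nothing to compare at the level of details. What you have written is precisely the classical proof one finds in that reference: pass to the fraction field $K=\C(x_2,\dots,x_n)$, use the adjugate identity $Af+Bg=\mathrm{Res}(f,g;x_1)$ for the forward implication, interpret $\det M=0$ as a $K$-linear dependence among $\{x_1^{j}f\}_{0\le j<m}\cup\{x_1^{k}g\}_{0\le k<l}$ to produce a nontrivial relation $Af+Bg=0$ with the required degree bounds, extract a common factor of positive $x_1$-degree in the PID $K[x_1]$, and then descend to $R[x_1]$ via Gauss's lemma. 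Each step is sound; in particular your degree-count $\deg_{x_1}g_1\le\deg_{x_1}A\le m-1$ forcing $\deg_{x_1}h\ge1$ is the crux, and your use of primitivity to pull the divisor back into $\C[x_1,\dots,x_n]$ is the right way to close the argument.
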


Theorem \ref{factor} is \S 3.6 Proposition 1 (ii) in \cite{Cox+Others/1991/Ideals}. In fact, the following is true (see \cite{Cox+Others/1991/Ideals}).

\begin{lemma} \label{GCD} Let $f$, $g \in \C[x_1,...,x_n]$ of positive degree in $x_1$. Then, there exist $A$, $B \in \C[x_2,...,x_n][x_1]$, such that $Res(f,g;x_1)=Af+Bg$.

\end{lemma}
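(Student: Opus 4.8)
The plan is to argue directly from the Sylvester matrix whose determinant, by definition, is $Res(f,g;x_1)$. Write $l=\deg_{x_1}f\geq 1$ and $m=\deg_{x_1}g\geq 1$, and regard $f$ and $g$ as polynomials in $x_1$ over the commutative ring $R:=\C[x_2,\ldots,x_n]$; all determinants, minors and cofactors below make sense over $R$. By the definition recalled above, $Res(f,g;x_1)=\det S$, where $S$ is the $(l+m)\times(l+m)$ matrix built from the coefficient vectors (with respect to the monomial basis $1,x_1,\ldots,x_1^{l+m-1}$ of the $R$-module of polynomials in $x_1$ of degree $<l+m$) of the $l+m$ polynomials $f,x_1f,\ldots,x_1^{m-1}f,g,x_1g,\ldots,x_1^{l-1}g$. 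Replacing $S$ by its transpose if necessary, I may assume these coefficient vectors are the columns of $S$, listed in this order.

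Next I would introduce the monomial column vector $\mathbf{m}=(x_1^{l+m-1},x_1^{l+m-2},\ldots,x_1,1)^{T}$, ordered to match the row ordering of $S$, so that $\mathbf{v}(p)^{T}\mathbf{m}=p$ for every polynomial $p$ in $x_1$ of degree $<l+m$, where $\mathbf{v}(p)$ denotes its coefficient vector. Then the $j$-th coordinate of $S^{T}\mathbf{m}$ equals $(\text{$j$-th column of }S)^{T}\mathbf{m}$, i.e. the $j$-th polynomial in the list above, so $S^{T}\mathbf{m}=(f,x_1f,\ldots,x_1^{m-1}f,g,x_1g,\ldots,x_1^{l-1}g)^{T}$.

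The key step is a multilinearity argument. Since the last coordinate of $\mathbf{m}$ equals $1$, the vector $S^{T}\mathbf{m}=\sum_i(\mathbf{m})_i\,(\text{$i$-th column of }S^{T})$ differs from the last column of $S^{T}$ only by an $R[x_1]$-combination of the other columns of $S^{T}$; hence, by multilinearity and alternation of the determinant, replacing the last column of $S^{T}$ by $S^{T}\mathbf{m}$ does not change $\det S^{T}=\det S=Res(f,g;x_1)$. Expanding the resulting determinant along this new last column gives
\begin{displaymath}Res(f,g;x_1)=\sum_{j=1}^{m}C_j\,x_1^{\,j-1}f+\sum_{j=1}^{l}C_{m+j}\,x_1^{\,j-1}g,\end{displaymath}
where each cofactor $C_j$ is, up to sign, a minor of $S$, hence an element of $R$. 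Setting $A:=\sum_{j=1}^{m}C_jx_1^{\,j-1}$ and $B:=\sum_{j=1}^{l}C_{m+j}x_1^{\,j-1}$, both in $R[x_1]=\C[x_2,\ldots,x_n][x_1]$, yields $Res(f,g;x_1)=Af+Bg$, as required.

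The only delicate point is the bookkeeping needed to line up the indexing convention used above for the Sylvester matrix with the ordering chosen for $\mathbf{m}$, so that the identity $\mathbf{v}(p)^{T}\mathbf{m}=p$ and the explicit reading-off of the coordinates of $S^{T}\mathbf{m}$ are simultaneously correct; conceptually there is nothing beyond multilinearity and cofactor expansion, and the fact that $R$ is only a ring (so the leading coefficients $a_l,b_m$ need not be invertible) never intervenes, because the argument performs no division. This is, of course, precisely the content of \S 3.6 of \cite{Cox+Others/1991/Ideals}, which could alternatively just be quoted.
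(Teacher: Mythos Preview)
Your argument is correct: it is the standard Sylvester-matrix proof, and the cofactor-expansion trick you describe is exactly how one shows $Res(f,g;x_1)\in (f,g)$ over any commutative coefficient ring. The paper itself does not prove this lemma at all; it merely states it and refers to \cite{Cox+Others/1991/Ideals}, so you have in fact supplied the argument the paper only cites. Your closing remark is apt: the only care needed is matching the paper's row/column convention for the Sylvester matrix with your choice of the monomial vector $\mathbf{m}$, which you address by allowing a transpose.
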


In particular, Lemma \ref{GCD} implies the following.

\begin{lemma} \label{rootsresultant} Let $f$, $g \in \C[x_1,x_2]$ be polynomials of positive degree in $x_1$. If $f$, $g$ both vanish at the point $(r_1,r_2)\in \C^2$, then $Res(f,g;x_1)$ vanishes at $r_2$.

\end{lemma}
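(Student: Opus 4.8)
The plan is to derive this directly from Lemma \ref{GCD}, which is the only real input needed. First I would invoke Lemma \ref{GCD} in the case $n=2$: since $f, g \in \C[x_1,x_2]$ have positive degree in $x_1$, there exist $A, B \in \C[x_2][x_1]$ such that
\begin{displaymath} Res(f,g;x_1) = Af + Bg \end{displaymath}
as an identity in $\C[x_1,x_2]$ (the left-hand side, a priori an element of $\C[x_2]$, is regarded here as a polynomial in $\C[x_1,x_2]$ that happens not to involve $x_1$).

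Next I would simply substitute the point $(r_1,r_2)$ into this identity. On the right-hand side we get $A(r_1,r_2)\,f(r_1,r_2) + B(r_1,r_2)\,g(r_1,r_2) = A(r_1,r_2)\cdot 0 + B(r_1,r_2)\cdot 0 = 0$, using the hypothesis that $f$ and $g$ both vanish at $(r_1,r_2)$. On the left-hand side, since $Res(f,g;x_1)$ does not depend on $x_1$, evaluating it at $(r_1,r_2)$ is the same as evaluating the one-variable polynomial $Res(f,g;x_1) \in \C[x_2]$ at $x_2 = r_2$. Equating the two sides yields that $Res(f,g;x_1)$ vanishes at $r_2$, as claimed.

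There is essentially no obstacle here: the argument is a one-line substitution once Lemma \ref{GCD} is in hand. The only point worth a remark is the degenerate case in which $f$ and $g$ share a common factor of positive degree in $x_1$, so that $Res(f,g;x_1)$ is the zero polynomial by Theorem \ref{factor}; in that case the conclusion is trivially true, and in all other cases the reasoning above applies verbatim. I would therefore keep the write-up to just the displayed identity, the substitution, and the concluding sentence.
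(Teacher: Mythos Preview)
Your proof is correct and matches the paper's approach: the paper does not write out a proof of this lemma but simply states that it follows from Lemma \ref{GCD}, which is exactly the input you use (substituting $(r_1,r_2)$ into the identity $Res(f,g;x_1)=Af+Bg$). Your remark about the degenerate common-factor case is a harmless aside, since the substitution argument already covers it.
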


On the other hand, by the definition of the resultant of two polynomials, it is easy to see the following (see \cite{Cox+Others/1991/Ideals}).

\begin{lemma} \label{emergency} Let $f$, $g \in \C[x_1,x_2]$ be polynomials of positive degree in $x_1$. Then, $Res(f,g;x_1)$ is a polynomial in $x_2$, of degree at most $\deg f \cdot \deg g$.

\end{lemma}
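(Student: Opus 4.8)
The plan is to extract the degree bound directly from the definition of the resultant as the determinant of the Sylvester matrix $(c_{ij})$, by tracking the $x_2$-degree contributed by each of its entries. Write $l:=\deg_{x_1}f\ge 1$ and $m:=\deg_{x_1}g\ge 1$, so that $l\le\deg f$ and $m\le\deg g$, and expand $f=\sum_{i=0}^{l}a_ix_1^i$, $g=\sum_{j=0}^{m}b_jx_1^j$ with $a_i,b_j\in\C[x_2]$. The only elementary input needed is that any monomial $x_1^sx_2^t$ appearing in $f$ has $s+t\le\deg f$, whence the coefficient $a_i$ of $x_1^i$ satisfies $\deg a_i\le\deg f-i$; likewise $\deg b_j\le\deg g-j$.

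Then I would expand $Res(f,g;x_1)=\det(c_{ij})$ by the Leibniz formula, $\det(c_{ij})=\sum_{\sigma}\operatorname{sgn}(\sigma)\prod_{i=1}^{l+m}c_{i\sigma(i)}$, and bound the $x_2$-degree of a single nonzero product $\prod_i c_{i\sigma(i)}$. In the first $m$ rows the nonzero entries are coefficients of $f$: such an entry $c_{i\sigma(i)}$ equals $a_{\sigma(i)-i}$ with $\sigma(i)-i\in\{0,\dots,l\}$, hence has $x_2$-degree at most $\deg f-(\sigma(i)-i)$. In the last $l$ rows the nonzero entries are coefficients of $g$: such an entry $c_{i\sigma(i)}$ equals $b_{\sigma(i)-i+m}$ with $\sigma(i)-i+m\in\{0,\dots,m\}$, hence has $x_2$-degree at most $\deg g-(\sigma(i)-i+m)$. (If the product vanishes there is nothing to prove.) Adding these bounds over all $i$ and using that $\sigma$ is a permutation of $\{1,\dots,l+m\}$, so that $\sum_{i=1}^{l+m}(i-\sigma(i))=0$, the $x_2$-degree of the product is at most
\begin{displaymath}
m\deg f+l\deg g-lm+\sum_{i=1}^{l+m}\bigl(i-\sigma(i)\bigr)=m\deg f+l\deg g-lm.
\end{displaymath}

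To conclude, I would observe that $m\deg f+l\deg g-lm\le\deg f\cdot\deg g$, since this rearranges to $(\deg f-l)(\deg g-m)\ge 0$, which holds because $l\le\deg f$ and $m\le\deg g$. As this bound is uniform in $\sigma$, the polynomial $Res(f,g;x_1)\in\C[x_2]$ has degree at most $\deg f\cdot\deg g$, which is exactly the assertion of the lemma. I do not expect any genuine obstacle: the only point requiring care is the bookkeeping one of matching the index shifts in the Sylvester matrix to the admissible ranges of rows and columns — in particular checking that, for a nonzero entry in a given row, the relevant subscript of $a_\bullet$ or $b_\bullet$ really lies in $\{0,\dots,l\}$ or $\{0,\dots,m\}$, so that the bounds $\deg a_i\le\deg f-i$ and $\deg b_j\le\deg g-j$ apply — together with verifying that $\sum_i(i-\sigma(i))$ telescopes to zero.
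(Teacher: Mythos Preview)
Your argument is correct and is precisely the standard verification from the Sylvester-matrix definition that the paper alludes to; the paper itself does not spell out a proof, merely asserting that the bound is ``easy to see'' from the definition and citing \cite{Cox+Others/1991/Ideals}. Your Leibniz-expansion bookkeeping (including the key observation $(\deg f-l)(\deg g-m)\ge 0$ to pass from $m\deg f+l\deg g-lm$ to $\deg f\cdot\deg g$) is exactly the computation one would supply to justify that assertion.
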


We are now ready to extend the proof of \cite[Corollary 2.5]{Guth_Katz_2008} to a more general setting, to deduce the following.

\begin{lemma}\label{4.1.12} Suppose that $f$, $g$ are non-constant polynomials in $ \C[x,y,z]$ which do not have a common factor. Then, the number of irreducible complex algebraic curves which are simultaneously contained in the zero set of $f$ and the zero set of $g$ in $\C^3$ is $\leq \deg f \cdot \deg g$. \end{lemma}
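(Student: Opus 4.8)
**

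The plan is to reduce the three-dimensional statement to the already-established two-variable machinery via the resultant. Suppose $f, g \in \C[x,y,z]$ are non-constant with no common factor. We may assume $f$ and $g$ both have positive degree in $x$: if not, a generic linear change of coordinates fixes this, and such a change does not affect either the number of common curves or the degrees. Having done this, form $R := Res(f,g;x) \in \C[y,z]$. By Theorem \ref{factor}, since $f$ and $g$ have no common factor of positive degree in $x$ (they have no common factor at all), $R$ is a non-zero polynomial; by Lemma \ref{emergency}, $\deg R \leq \deg f \cdot \deg g$.

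The key geometric step is to show that if $\gamma$ is an irreducible complex algebraic curve contained in both zero sets, then $\gamma$ projects (under $(x,y,z) \mapsto (y,z)$) into the zero set of $R$, and moreover that generically this projection is again a curve, not a point. For the first part: every point $(x_0,y_0,z_0) \in \gamma$ lies in $V(f) \cap V(g)$, so by Lemma \ref{rootsresultant} (applied with $f,g$ viewed as polynomials in $x$ over $\C[y,z]$, treating $(y,z)$ as a single ``second variable'' — or more carefully, fixing one of $y,z$ and running the lemma, or invoking Lemma \ref{GCD} directly: $R = Af + Bg$ vanishes wherever $f$ and $g$ both vanish) we get $R(y_0,z_0) = 0$. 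Hence $\pi(\gamma) \subseteq V(R) \subset \C^2$. For the second part, after a further generic linear change of coordinates we may assume the projection $\pi$ restricted to $\gamma$ is non-constant, i.e. $\pi(\gamma)$ is a one-dimensional constructible set whose Zariski closure is an irreducible plane curve $\overline{\pi(\gamma)} \subseteq V(R)$; and distinct curves $\gamma$ among our family give distinct $\overline{\pi(\gamma)}$ provided the coordinate change is chosen generically with respect to the (finitely many, by Corollary \ref{4.1.4}-type bounds, or rather: we must be slightly careful here) curves in question.

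To finish, I would bound the number of irreducible plane curves contained in $V(R)$ by $\deg R$: an irreducible plane curve contained in $V(R)$ is (the zero set of) an irreducible factor of $R$, by Lemma \ref{newstuff2} applied over $\C$ together with the fact that $V(R)$ is a finite union of such factors' zero sets, and $R$ has at most $\deg R$ irreducible factors counted without multiplicity. Combining, the number of irreducible complex curves in $V(f) \cap V(g)$ is at most the number of irreducible plane curves in $V(R)$, which is at most $\deg R \leq \deg f \cdot \deg g$.

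The main obstacle is the genericity bookkeeping: one needs a single linear change of coordinates that simultaneously (a) makes both $f$ and $g$ have positive $x$-degree, (b) makes the projection $\pi|_\gamma$ non-constant for every curve $\gamma$ in the common intersection, and (c) makes the map $\gamma \mapsto \overline{\pi(\gamma)}$ injective on that family. Point (b) is the subtle one, since a priori we do not yet know the family of common curves is finite — that is what we are proving. The way around this is to argue that for a fixed $\gamma$, the bad directions (those along which $\gamma$ projects to a point, i.e. $\gamma$ is a line in that direction, or two $\gamma$'s have the same projection) form a proper subvariety of the space of coordinate changes, and that it suffices to handle the $\gamma$'s that are \emph{not} lines in a unified way while noting there are at most finitely many lines in $V(f) \cap V(g)$ with positive $x$-degree behaviour — alternatively, and more cleanly, one first disposes of lines via Theorem \ref{2.2.1}-style reasoning or absorbs them into the count, then applies the resultant argument to the non-linear components, for which a generic direction works uniformly. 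I would also double-check that Lemma \ref{rootsresultant} is being applied in a legitimate way, since it is stated for two variables; the honest route is Lemma \ref{GCD}, which gives $R = Af + Bg$ in full generality and makes the vanishing claim immediate.
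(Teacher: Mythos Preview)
Your projection-and-count strategy is a genuine alternative to the paper's argument, and the overall architecture is sound: form $R = Res(f,g;x)$, show every common curve projects into $V(R)$ via Lemma \ref{GCD}, and bound the number of irreducible components of $V(R)$ by $\deg R \leq \deg f \cdot \deg g$. The paper takes a different route: arguing by contradiction, it assumes $\deg f \cdot \deg g + 1$ common curves exist, slices by a one-parameter family of generic planes $\Pi$ transverse to all of them, observes that each slice contains at least $\deg f \cdot \deg g + 1$ common zeros of $f|_\Pi, g|_\Pi$ with distinct $y$-coordinates, and applies Lemmas \ref{rootsresultant} and \ref{emergency} in two variables to force $Res(f|_\Pi, g|_\Pi; x) \equiv 0$ on every slice, hence $Res(f,g;x) \equiv 0$, contradicting Theorem \ref{factor}. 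So the paper never needs projections to be injective on curves; it only needs each slice to hit enough points.

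There is, however, a real gap in your genericity step, and you have correctly located it without closing it. Conditions (b) and (c) --- that no common curve projects to a point and that distinct common curves project to distinct irreducible plane curves --- each impose, for every curve or pair of curves, a proper closed condition on the space of linear coordinate changes; but you cannot take the union of these conditions and stay proper unless you already know the family of curves is finite. Your proposed workaround (dispose of lines via Theorem \ref{2.2.1}, then assert a generic direction works ``uniformly'' for the non-linear curves) does not close this: nothing in your argument rules out infinitely many non-linear irreducible curves a priori, and the union of their bad loci could be everything. The clean fix is precisely what the paper does implicitly: argue by contradiction. If there were more than $\deg f \cdot \deg g$ common curves, select any $\deg f \cdot \deg g + 1$ of them; now you have a finite list, a single generic linear change satisfies (a), (b), (c) simultaneously for that list, and your counting argument finishes. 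Alternatively, you could invoke standard dimension theory up front --- since $f$ and $g$ share no factor, $V(f) \cap V(g)$ has dimension at most $1$, hence finitely many irreducible components by Noetherianity --- and then proceed directly.

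One smaller point: Lemma \ref{emergency} as stated is for $f, g \in \C[x_1,x_2]$, bounding the degree of the resultant as a univariate polynomial. You are tacitly using the three-variable analogue (bounding the total degree of $R \in \C[y,z]$), which is true and standard but not what the paper literally provides. The paper's slicing approach sidesteps this by restricting to each plane first and applying the two-variable lemma there.
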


\begin{proof} Let $\Gamma$ be the family of irreducible complex algebraic curves in $\C^3$. Suppose that there exist $\deg f \cdot \deg g+1$ curves in $\Gamma$, simultaneously contained in the zero set of $f$ and the zero set of $g$. A generic complex plane intersects a complex algebraic curve in $\C^3$ at least once and finitely many times, while each two curves in $\Gamma$ intersect in finitely many points of $\C^3$. Therefore, we can change the coordinates, so that $f$ and $g$ have positive degree in $x$, and also so that there exists some point $p=(p_1,p_2,p_3)\in \C^3$ and some $\epsilon >0$, such that any plane in the family $\mathcal{A}:=\big\{$planes in $\C^3$, perpendicular to $(0,0,1)$ and passing through a point of the form $p+ \delta \cdot (0,0,1)$, for $\delta \in (-\epsilon, \epsilon)\}$ is transverse to all the $\deg f \cdot \deg g +1$ curves, intersecting them at points with distinct $y$ coordinates. Thus, each such plane contains at least $\deg f \cdot \deg g +1$ points of $\C^3$ with distinct $y$ coordinates, where both $f$ and $g$ vanish.

Therefore, if $\Pi \in \mathcal{A}$, then $f_{|\Pi}$, $g_{|\Pi}$ are two polynomials in $\C[x,y]$, vanishing at $\geq \deg f \cdot \deg g +1 \geq \deg f_{|\Pi} \cdot \deg g_{|\Pi}+1$ points of $\C^2$ with distinct $y$ coordinates.

At the same time, there are at most $\deg f$, i.e. finitely many, planes $\Pi$ in $\mathcal{A}$, such that $f_{|\Pi}$ does not have positive degree in $x$, and at most $\deg g$, i.e. finitely many, planes $\Pi$ in $\mathcal{A}$, such that $g_{|\Pi}$ does not have positive degree in $x$. Indeed, suppose that there are more than $\deg f$ planes in $\mathcal{A}$, such that $f_{|\Pi}$ does not have positive degree in $x$. Let $h$ be the coefficient of a positive power of $x$ in the expression of the polynomial $f$ as a polynomial of $x$. We view $h$ as a complex polynomial in $x$, $y$, $z$, of non-positive degree in $x$. By our assumption, there are more than $\deg f\geq \deg h$ planes in $\mathcal{A}$ on which $h$ vanishes, therefore $h$ vanishes on $\C^3$ (since a generic line in $\C^3$ intersects all those planes, and thus lies in the zero set of $h$). Hence, $h$ is the zero polynomial; and since $h$ was the coefficient of an arbitrary positive power of $x$ in the expression of $f$ as a polynomial of $x$, it follows that $f$ does not have positive degree in $x$, which is a contradiction. We similarly get a contradiction if we assume that there exist more than $\deg g$ planes in $\mathcal{A}$, such that $g_{|\Pi}$ does not have positive degree in $x$.

Thus, there exists an open interval $I \subset (-\epsilon, \epsilon)$, such that, if $\Pi$ is a plane in the family $\mathcal{A}':=\big\{$planes in $\C^3$, perpendicular to $(0,0,1)$ and passing through a point of the form $p+\delta \cdot (0,0,1)$, for $\delta \in I\}$, then $f_{|\Pi}$, $g_{|\Pi}$ are two polynomials in $\C[x,y]$, of positive degree in $x$, vanishing at $\geq \deg f_{|\Pi} \cdot \deg g_{|\Pi}+1$ points of $\C^2$ with distinct $y$ coordinates. Thus, by Lemma \ref{emergency}, $Res(f_{|\Pi},g_{|\Pi};x)\equiv 0$, for all $\Pi \in \mathcal{A}'$. However, $Res(f_{|\Pi}, g_{|\Pi};x)\equiv Res(f,g;x)_{|\Pi}$, for all $\Pi \in \mathcal{A}'$.

As a result, we have that, for all $\Pi \in \mathcal{A}'$, $Res(f,g;x)_{|\Pi} \equiv 0$; this means that the polynomial $Res(f,g;x) \in \C[y,z]$ vanishes for all $(y,z) \in \C^2$, such that $y \in \C$ and $z \in J$, for some subset $J$ of $\C$ of the form $\{z \in \C: z=p_3+a$, for $a \in (a_1, a_2)\}$, where $a_1$, $a_2 \in \R$. In other words, the polynomial $Res(f,g;x) \in \C[y,z]$ vanishes on a rectangle of $\C^2$. Therefore, it vanishes identically. And $Res(f,g;x) \equiv 0$ means that $f$ and $g$ have a common factor (since they both have positive degree when viewed as polynomials in $x$). We are thus led to a contradiction, which means that there exist $\leq  \deg f \cdot \deg g$ curves of $\Gamma$ simultaneously contained in the zero set of $f$ and the zero set of $g$.

\end{proof}

\begin{corollary}\label{4.1.13} Let $f$ and $g$ be non-constant polynomials in $\R[x,y,z]$. Suppose that $f$ and $g$ do not have a common factor. Then, the number of irreducible real algebraic curves which are simultaneously contained in the zero set of $f$ and the zero set of $g$ in $\R^3$ is $\leq \deg f \cdot \deg g$. \end{corollary}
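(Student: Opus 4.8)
The plan is to reduce the statement to its complex counterpart, Lemma \ref{4.1.12}, by means of the bijective correspondence between irreducible real algebraic curves in $\R^3$ and the irreducible complex algebraic curves that extend them, provided by Lemma \ref{4.1.1}.

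First I would record that, since $f$ and $g$ have no common factor in $\R[x,y,z]$, they also have no common factor in $\C[x,y,z]$. This is a standard property of the extension $\R\subset\C$, but it can be checked directly: if $p\in\C[x,y,z]$ were a common irreducible factor of $f$ and $g$, then, applying complex conjugation of coefficients (which fixes both $f$ and $g$), $\bar p$ would also be a common factor. Writing $p=p_1+ip_2$ with $p_1,p_2\in\R[x,y,z]$, if $p$ and $\bar p$ are not associates in $\C[x,y,z]$ then $p\bar p=p_1^2+p_2^2$ is a non-constant element of $\R[x,y,z]$ dividing both $f$ and $g$; if instead $\bar p=\lambda p$ for some $\lambda\in\C$ (necessarily with $|\lambda|=1$), then a suitable unimodular scalar multiple $q$ of $p$ has real coefficients, and $q$ divides $f$ and $g$ in $\C[x,y,z]$, hence also in $\R[x,y,z]$ (conjugating a factorisation $f=qs$ with $q,f\in\R[x,y,z]$ forces $s=\bar s$). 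In either case we contradict the coprimality of $f$ and $g$ over $\R$.

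Next, let $\gamma$ be an irreducible real algebraic curve contained in the zero set of $f$ and the zero set of $g$ in $\R^3$, and let $\gamma_{\C}$ be the unique irreducible complex algebraic curve in $\C^3$ containing $\gamma$ (Lemma \ref{4.1.1}). I claim that $\gamma_{\C}$ is contained in the zero set of $f$ and the zero set of $g$ in $\C^3$. Indeed, $\gamma$ has dimension $1$, hence is infinite, and, viewed inside $\C^3$, it is contained in the zero set of $f$ there; so the intersection of $\gamma_{\C}$ with that zero set is an algebraic subset of the irreducible curve $\gamma_{\C}$ that is infinite, and therefore equals $\gamma_{\C}$. The same holds with $g$ in place of $f$. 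Moreover, the assignment $\gamma\mapsto\gamma_{\C}$ is injective on irreducible real algebraic curves, since by Lemma \ref{newstuff} each such $\gamma$ is recovered as $\R^3\cap\gamma_{\C}$.

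Finally, by the first step Lemma \ref{4.1.12} applies and bounds by $\deg f\cdot\deg g$ the number of irreducible complex algebraic curves simultaneously contained in the zero sets of $f$ and $g$ in $\C^3$; combined with the injectivity just established, this bounds by $\deg f\cdot\deg g$ the number of irreducible real algebraic curves simultaneously contained in the zero sets of $f$ and $g$ in $\R^3$, as desired. I expect the only genuinely delicate point to be the passage from coprimality of $f$ and $g$ over $\R$ to coprimality over $\C$, while the rest is a routine transfer through the real-to-complex dictionary developed earlier in this section.
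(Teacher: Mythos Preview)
Your proposal is correct and follows essentially the same route as the paper: pass to $\C[x,y,z]$, check coprimality is preserved, send each irreducible real curve to its unique complex extension via Lemma~\ref{4.1.1}, verify the extension lies in both complex zero sets, note injectivity via Lemma~\ref{newstuff}, and invoke Lemma~\ref{4.1.12}. The only notable difference is that your coprimality argument is more careful than the paper's (you distinguish whether $p$ and $\bar p$ are associates, whereas the paper simply asserts that $h\bar h$ is a real common factor), and you justify $\gamma_{\C}\subset\{f=0\}$ by irreducibility rather than by citing B\'ezout's theorem; both variants are fine.
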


\begin{proof} We see $f$ and $g$ as polynomials in $\C[x,y,z]$, and viewed as such we denote them by $f_{\C}$,  $g_{\C}$, respectively. Also, let $\Gamma$ be the family of irreducible real algebraic curves in $\R^3$. For all $\gamma \in \Gamma$, we denote by $\gamma_{\C}$ the (unique) irreducible complex algebraic curve containing $\gamma$. 

Since the polynomials $f$, $g\in \R[x,y,z]$ do not have a common factor in $\R[x,y,z]$, the polynomials $f_{\C}$, $g_{\C}\in \C[x,y,z]$ do not have a common factor in $\C[x,y,z]$. Indeed, if $h \in \C[x,y,z]$ was a common factor of $f_{\C}$, $g_{\C}$, which are polynomials with real coefficients, then $\bar{h} \in \C[x,y,z]$ would also be a common factor of $f_{\C}$, $g_{\C}$, therefore $h\bar{h} \in \R[x,y,z]$ would be a common factor of the polynomials $f$, $g \in \R[x,y,z]$.

Now, suppose that a curve $\gamma \in \Gamma$ lies in both the zero set of $f$ and the zero set of $g$. We know that, as it contains $\gamma$, the irreducible complex algebraic curve $\gamma _{\C}$ intersects the zero set of $f_{\C}$ (which contains the zero set of $f$) infinitely many times; thus, by B\'ezout's theorem, it is contained in the zero set of $f_{\C}$. Similarly, $\gamma_{\C}$ is contained in the zero set of $g_{\C}$.

Moreover, if $\gamma^{(1)}$, $\gamma^{(2)} \in\Gamma$ are such that $\gamma^{(1)}\not\equiv \gamma^{(2)}$, then $\gamma^{(1)}_{\C}\not\equiv \gamma^{(2)}_{\C}$. The reason for this is that $\gamma^{(1)}$ is the intersection of $\gamma^{(1)}_{\C}$ with $\R^3$, while $\gamma^{(2)}$ is the intersection of $\gamma^{(2)}_{\C}$ with $\R^3$.

Thus, if $>\deg f \cdot \deg g$ curves of $\Gamma$ lie simultaneously in the zero set of $f$ and the zero set of $g$, then there exist $> \deg f \cdot \deg g$ irreducible complex algebraic curves in $\C^3$, lying in both the zero set of $f_{\C}$ and the zero set of $g_{\C}$, where $f_{\C}$, $g_{\C} \in \C[x,y,z]$ do not have a common factor in $\C[x,y,z]$. By Lemma \ref{4.1.12} though, this is a contradiction. Therefore, the number of curves in $\Gamma$, i.e. of irreducible real algebraic curves in $\R^3$, which are simultaneously contained in the zero set of $f$ and the zero set of $g$, is $\leq \deg f \cdot \deg g$.
\newline\end{proof}

\textbf{Definition.} \textit{Let $Z$ be the zero set of a polynomial $p \in \R[x,y,z]$. A curve $\gamma$ in $\R^3$ is a }critical curve\textit{ of $Z$ if each point of $\gamma$ is a critical point of $Z$.}

We are now able to deduce the following.

\begin{corollary}\label{4.1.14} The zero set of a polynomial $p \in \R[x,y,z]$ contains at most $(\deg p)^2$ critical irreducible real algebraic curves of $\R^3$. \end{corollary}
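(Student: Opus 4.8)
The plan is to imitate, essentially verbatim, the proof of Proposition \ref{2.2.3}, replacing the B\'ezout-type bound for lines (Theorem \ref{2.2.1}) by its curve analogue, Corollary \ref{4.1.13}. As in Proposition \ref{2.2.3} we take $p$ to be non-zero; if moreover $p$ is constant its zero set is empty and there is nothing to prove, so we may assume $d:=\deg p\geq 1$. I would then pass to the square-free polynomial $p_{sf}$, which has the same zero set $Z$ and degree $\leq d$, and factor it as $p_{sf}=p_1\cdots p_k$ into irreducible polynomials of $\R[x,y,z]$. Since $p_{sf}$ is square-free, $p_{sf}$ and $\nabla p_{sf}$ have no common factor, so for each $i\in\{1,\dots,k\}$ there is some $g_i\in\big\{\tfrac{\partial p_{sf}}{\partial x},\tfrac{\partial p_{sf}}{\partial y},\tfrac{\partial p_{sf}}{\partial z}\big\}$ which is not divisible by the irreducible polynomial $p_i$; in particular $p_i$ and $g_i$ have no common factor. (If such a $g_i$ happens to be a non-zero constant its zero set is empty and it contributes no curves, so we discard those indices; for the remaining indices $g_i$ is non-constant, which is what is needed for Corollary \ref{4.1.13} to apply to the pair $(p_i,g_i)$.)

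Next I would take an arbitrary critical irreducible real algebraic curve $\gamma$ of $Z$ and locate it inside one of the factors. Since $\gamma\subseteq Z=Z(p_{sf})=\bigcup_{i=1}^{k}Z(p_i)$, and each $\gamma\cap Z(p_i)$ is a real algebraic subset of $\gamma$, the irreducibility of $\gamma$ forces $\gamma\subseteq Z(p_j)$ for some $j$ (otherwise $\gamma$ would be a finite union of proper, hence zero-dimensional and finite, subvarieties, contradicting that an irreducible real algebraic curve is infinite). On the other hand, $\gamma$ being a critical curve of $Z$ means that $\nabla p_{sf}$ vanishes identically on $\gamma$, hence $\gamma\subseteq Z(g_j)$ as well. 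Thus every critical irreducible real algebraic curve of $Z$ is simultaneously contained in the zero set of $p_j$ and the zero set of $g_j$, for some $j$, and these two polynomials are non-constant with no common factor.

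Finally I would count. The number of critical irreducible real algebraic curves of $Z$ is at most $\sum_{i=1}^{k}M_i$, where $M_i$ is the number of irreducible real algebraic curves of $\R^3$ lying simultaneously in $Z(p_i)$ and $Z(g_i)$. By Corollary \ref{4.1.13}, $M_i\leq \deg p_i\cdot\deg g_i\leq \deg p_i\cdot\deg p_{sf}\leq \deg p_i\cdot d$ for each $i$, so the total is at most $d\sum_{i=1}^{k}\deg p_i=d\cdot\deg p_{sf}\leq d^2=(\deg p)^2$.

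I do not expect a genuine obstacle: the substantive real-algebraic-geometry input — in particular the comparison between the real and complex pictures that makes the B\'ezout bound survive over $\R$ — has already been absorbed into Corollary \ref{4.1.13}. The only points requiring a little care are the reduction ``$\gamma\subseteq\bigcup_i Z(p_i)\Rightarrow\gamma\subseteq Z(p_j)$ for a single $j$'', which uses the irreducibility of $\gamma$ and the fact that an irreducible curve is infinite, and the bookkeeping of the degenerate cases ($p$ zero or constant, $g_i$ constant) so that the hypotheses of Corollary \ref{4.1.13} are legitimately met.
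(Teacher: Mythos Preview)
Your proposal is correct and follows essentially the same approach as the paper: the paper's proof is a two-line appeal to Corollary~\ref{4.1.13} via the fact that $p_{sf}$ and $\nabla p_{sf}$ have no common factor, and you have simply spelled out the details by mimicking the factor-by-factor argument of Proposition~\ref{2.2.3} with Corollary~\ref{4.1.13} in place of Theorem~\ref{2.2.1}. Your extra care with the degenerate cases (constant $g_i$, the irreducibility step forcing $\gamma\subseteq Z(p_j)$) is appropriate and fills in what the paper leaves implicit.
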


\begin{proof} The polynomials $p_{sf}$ and $\nabla{p_{sf}}$, the intersection of the zero sets of which is the set of critical points of the zero set of $p$, do not have a common factor, as $p_{sf}$ is square-free. Therefore, the result follows by Corollary \ref{4.1.13}.

\end{proof}

On a different subject, it is known (see \cite[Chapter 5]{MR2248869}) that each real semi-algebraic set is the finite, disjoint union of path-connected components. We observe the following.

\begin{lemma}\label{4.1.15} A real algebraic curve in $\R^n$ is the finite, disjoint union of $\lesssim_{b,n} 1$ path-connected components. \end{lemma}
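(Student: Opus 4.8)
The plan is to reduce the statement to a known quantitative bound on the number of connected components of a real algebraic set in terms of the number and degree of the defining polynomials. First I would recall, as established in the discussion preceding Theorem \ref{4.1.5}, that a real algebraic curve $\gamma$ in $\R^n$ can be written as the zero set of a single polynomial $P \in \R[x_1,\dots,x_n]$, namely $P = p_1^2 + \cdots + p_k^2$ if $\gamma = V(p_1,\dots,p_k)$; moreover, by the definition of degree given above together with Corollary \ref{4.1.14}-type considerations — more precisely, by the fact (cf. \cite[Theorem A.3]{MR2827010}) that a variety of bounded degree is cut out by boundedly many polynomials of bounded degree — we may in fact take $P$ to have degree $\lesssim_{b,n} 1$, where $b = \deg\gamma$. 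So it suffices to bound the number of path-connected components of $\{x \in \R^n : P(x) = 0\}$ for a single polynomial $P$ of degree $\lesssim_{b,n} 1$.

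Next I would invoke the classical Oleinik--Petrovsky--Thom--Milnor bound (or, equivalently, the semi-algebraic cell-decomposition / cylindrical algebraic decomposition machinery of \cite{MR2248869}, Chapter 5, which the excerpt has already cited for the fact that a semi-algebraic set has finitely many path-connected components): the number of connected components of the real zero set of a polynomial of degree $d$ in $n$ variables is $O\big(d(2d-1)^{n-1}\big)$, hence $\lesssim_{d,n} 1$. Applying this with $d \lesssim_{b,n} 1$ yields that $\gamma$ has $\lesssim_{b,n} 1$ path-connected components, which is exactly the claim.

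The only genuinely nontrivial point — and the step I expect to be the main obstacle to state cleanly — is the passage from "$\gamma$ is a curve of degree $b$" to "$\gamma$ is the zero set of a \emph{single} polynomial of degree $\lesssim_{b,n}1$"; the naive sum-of-squares description controls only the number of defining polynomials and not their degrees uniformly in terms of $b$ alone, so one must either appeal to the effective Nullstellensatz-type statement that a variety of degree $b$ and dimension $1$ in $\R^n$ (or its complexification in $\C^n$) is defined by polynomials of degree $\lesssim_{b,n}1$, or else run the cylindrical algebraic decomposition directly on the original finite family $\{p_1,\dots,p_k\}$ while tracking that both $k$ and $\max_i \deg p_i$ can be taken $\lesssim_{b,n}1$. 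Either route reduces the problem to the quantitative bound on connected components of real algebraic sets, after which the conclusion is immediate.
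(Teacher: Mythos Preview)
Your proposal is correct, and in fact more careful than the paper's own argument, which consists of a single sentence referring to the cylindrical algebraic decomposition algorithm in \cite[Chapter 5]{MR2248869} and asserting that its output complexity depends only on the degree and the ambient dimension.

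The two approaches differ as follows. The paper simply gestures at the CAD algorithm and asks the reader to track the dependence of the number of cells on the input data; this is your second option, and it leaves implicit precisely the point you flag as the main obstacle, namely that the curve admits a description by $\lesssim_{b,n}1$ polynomials of degree $\lesssim_{b,n}1$. Your primary route via the Oleinik--Petrovsky--Thom--Milnor bound is cleaner: once you have a single defining polynomial of controlled degree (obtained by summing squares of the bounded-degree generators furnished by \cite[Theorem A.3]{MR2827010} applied to the complexification, together with Lemma \ref{newstuff}), the $O\big(d(2d-1)^{n-1}\big)$ bound gives the number of connected components directly, and for semi-algebraic sets connected components are automatically path-connected. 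What your approach buys is an explicit quantitative bound and a self-contained argument; what the paper's approach buys is brevity, at the cost of leaving the degree-control step to the reader. Either way the substance is the same, and you have correctly identified the one nontrivial ingredient that the paper glosses over.
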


\begin{proof} This is obvious by a closer study of the algorithm in \cite[Chapter 5]{MR2248869} that constitutes the proof of the fact that every real semi-algebraic set is the finite, disjoint union of path-connected components.

\end{proof}

Finally, we are interested in curves in $\R^3$ parametrised by $t \rightarrow \big(p_1(t)$, $p_2(t)$, $p_3(t)\big)$ for $t \in \R$, where $p_i \in \R[t]$ for $i=1,2,3$. Note that, although curves in $\C^3$ with a polynomial parametrisation are, in fact, complex algebraic curves of degree equal to the maximal degree of the polynomials realising the parametrisation (see \cite[Chapter 3, \S3]{Cox+Others/1991/Ideals}), curves in $\R^3$ with a polynomial parametrisation are not, in general, real algebraic curves, which is why we treat their case separately.

More particularly, if a curve $\gamma$ in $\R^3$ is parametrised by $t \rightarrow \big(p_1(t)$, $p_2(t)$, $p_3(t)\big)$ for $t \in \R$, where the $p_i \in \R[t]$, for $i=1,2,3$, are polynomials not simultaneously constant, then the complex algebraic curve $\gamma_{\C}$ parametrised by the same polynomials viewed as elements of $\C[t]$ is irreducible (it is easy to see that if it contains a complex algebraic curve, then the two curves are identical). Therefore, by B\'ezout's theorem, $\gamma_{\C}$ is the unique complex algebraic curve containing $\gamma$.

Taking advantage of this fact, we will show here that each curve in $\R^3$ with a polynomial parametrisation is contained in a real algebraic curve in $\R^3$.

To that end, we first show the following.

\begin{lemma} \label{maybelast}The intersection of a complex algebraic curve in $\C^n$ with $\R^n$ is a real algebraic set, of dimension at most 1.
\end{lemma}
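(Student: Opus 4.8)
The plan is to split the statement into its two halves; the first is a routine real/imaginary-part manipulation, and the real content is the dimension bound. To see that $\gamma_{\C}\cap\R^n$ is a real algebraic set, I would write the complex algebraic curve as $\gamma_{\C}=V(q_1,\dots,q_m)$ with $q_1,\dots,q_m\in\C[x_1,\dots,x_n]$, decompose each polynomial as $q_j=a_j+ib_j$ with $a_j,b_j\in\R[x_1,\dots,x_n]$, and observe that for $x\in\R^n$ one has $q_j(x)=0$ precisely when $a_j(x)=0$ and $b_j(x)=0$. Hence $\gamma_{\C}\cap\R^n$ is exactly the common real zero set of the $2m$ real polynomials $a_1,b_1,\dots,a_m,b_m$, i.e.\ a real algebraic set in $\R^n$.

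For the dimension bound I would use the description of dimension via initial ideals recalled above (following \cite{Sturmfels_2005}). Put $X:=\gamma_{\C}\cap\R^n$, fix a term order $\prec$, and note that, since $\gamma_{\C}$ has dimension $1$, every maximal set $S$ of variables with the property that no monomial in the variables of $S$ lies in $in_{\prec}(I(\gamma_{\C}))$ has $|S|=1$; consequently, for each pair of distinct variables $x_i,x_j$ there is a (necessarily non-constant) monomial $M$ in $x_i,x_j$ alone lying in $in_{\prec}(I(\gamma_{\C}))$, and we may pick $p\in I(\gamma_{\C})$ with $\prec$-leading term $M$. Because $X\subseteq\gamma_{\C}$ and $X\subseteq\R^n$, writing $p=u+iv$ with $u,v\in\R[x_1,\dots,x_n]$ forces both $u$ and $v$ to vanish on $X$, so $u,v\in I(X)$; and since every monomial occurring in $p$ has non-zero real part or non-zero imaginary part, $\mathrm{supp}(u)\cup\mathrm{supp}(v)=\mathrm{supp}(p)$, so $M=\max_{\prec}\mathrm{supp}(p)$ is the $\prec$-leading term of $u$ or of $v$ and therefore $M\in in_{\prec}(I(X))$. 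As this holds for every pair $x_i,x_j$, no two-element set of variables can avoid $in_{\prec}(I(X))$, and the cited characterisation gives $\dim X\le 1$. If one prefers to invoke that fact only for irreducible varieties, one first passes to an irreducible component $Y$ of $X$ of maximal dimension and runs the same argument with $I(Y)\supseteq I(X)$ in place of $I(X)$. Together with the first paragraph this proves the lemma.

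I expect the only genuinely delicate point to be the passage from the complex ideal $I(\gamma_{\C})$ to the real ideal $I(X)$ of the intersection: one must keep track of the $\prec$-leading monomial through the real/imaginary decomposition (the support computation above), and one must be careful to apply the initial-ideal dimension formula to the \emph{full} vanishing ideal $I(X)$ rather than to the ideal merely generated by the $a_j$ and $b_j$ — and, in the reducible case, to each irreducible component of $X$. Everything else is formal bookkeeping.
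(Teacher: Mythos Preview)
Your proof is correct and follows essentially the same strategy as the paper's: both halves are handled via the same ideas, with the dimension bound transferred from $I(\gamma_{\C})$ to $I(X)$ through the initial-ideal characterisation of dimension. The only cosmetic difference is that where you split each complex polynomial into real and imaginary parts (so the leading monomial $M$ of $p\in I(\gamma_{\C})$ survives as the leading monomial of $u$ or $v$ in $I(X)$), the paper instead passes from $p$ to $p\bar{p}\in\R[x_1,\dots,x_n]$ (so the leading monomial becomes $M^2$, still in $x_i,x_j$); both devices accomplish the same transfer, and your support computation is a clean way to justify it.
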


\begin{proof} Let $\gamma_{\C}$ be a complex algebraic curve in $\C^n$, and $\gamma$ the intersection of $\gamma_{\C}$ with $\R^n$. We show that $\gamma$ is a real algebraic set, of dimension at most 1.

Indeed, since $\gamma_{\C}$ is a complex algebraic set in $\C^n$, there exist polynomials $p_1$, ..., $p_k \in \C[x_1,...,x_n]$, such that $\gamma_{\C}$ is the intersection of the zero sets of $p_1$, ..., $p_k$ in $\C^n$. Now, for $i=1,...,k$, the intersection of the zero set of the polynomial $p_i$ in $\C^n$ with $\R^n$ is equal to the zero set of $p_i$ in $\R^n$, which is the same as the zero set of the polynomial $p_i\bar{p_i} \in \R[x_1,...,x_n]$ in $\R^n$. Therefore, $\gamma$ is the intersection of the zero sets of the polynomials $p_1\bar{p_1}$, ..., $p_k\bar{p_k} \in \R[x_1,...,x_n]$ in $\R^n$, it is thus a real algebraic set.

Moreover, let $\prec$ be a term order on the set of monomials in the variables $x_1$, ..., $x_n$. Since $\gamma_{\C}$ is a complex algebraic curve in $\C^n$, it holds that, for every $i\neq j$, $i,j \in \{1,...,n\}$, there exists a monomial in the variables $x_i$ and $x_j$ in the ideal $in_{\prec}(I(\gamma_{\C}))$. Now, if a polynomial $p \in \C[x_1,...,x_n]$ belongs to $I(\gamma_{\C})$, i.e. vanishes on $\gamma_{\C}$, then the polynomial $p \bar{p} \in \R[x_1,...,x_n]$ vanishes on $\gamma$, and thus belongs to the ideal $I(\gamma)$ of $\R[x_1,...,x_n]$. Therefore, there exists a monomial in the variables $x_i$ and $x_j$ in the ideal $in_{\prec}(I(\gamma))$. Consequently, the algebraic set $\gamma$ has dimension at most 1. 

\end{proof}

\begin{corollary} \label{parametrisedcurves} Let $\gamma$ be a curve in $\R^3$, parametrised by $t \rightarrow \big(p_1(t)$, $p_2(t)$, $p_3(t)\big)$ for $t \in \R$, where the $p_i \in \R[t]$, for $i=1,2,3$, are polynomials not simultaneously constant, of degree at most $b$. Then, $\gamma$ is contained in an irreducible real algebraic curve in $\R^3$, of degree at most $b$.
\end{corollary}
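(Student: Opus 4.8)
The plan is to pass to the complex setting, exploit that the complex curve carried by the same polynomial parametrisation is irreducible of controlled degree, and then intersect back with $\R^3$, using Lemma \ref{maybelast} and Lemma \ref{4.1.1} to transfer the degree bound.

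First I would consider $\gamma_{\C}$, the curve in $\C^3$ parametrised by $t \mapsto \big(p_1(t), p_2(t), p_3(t)\big)$ with the $p_i$ now regarded as elements of $\C[t]$. As recalled in the discussion immediately preceding this statement (and by \cite[Chapter 3, \S3]{Cox+Others/1991/Ideals}), $\gamma_{\C}$ is an irreducible complex algebraic curve, of degree equal to $\max_i \deg p_i \leq b$, and it is the unique complex algebraic curve containing $\gamma$; in particular $\gamma \subseteq \gamma_{\C}$.

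Next I would cut back down to $\R^3$. By Lemma \ref{maybelast}, the set $\gamma_{\C} \cap \R^3$ is a real algebraic set of dimension at most $1$ (indeed, in the notation of that proof it is the zero set in $\R^3$ of the real polynomials $p_i \bar{p_i}$). Let $\delta$ be the Zariski closure of $\gamma$ in $\R^3$. Since $\gamma \subseteq \gamma_{\C} \cap \R^3$ and the latter is Zariski-closed in $\R^3$, we get $\delta \subseteq \gamma_{\C} \cap \R^3 \subseteq \gamma_{\C}$, so $\delta$ is a real algebraic set of dimension at most $1$. Because the $p_i$ are not simultaneously constant, $\gamma$ is infinite, hence $\delta$ is infinite and therefore has dimension exactly $1$, i.e. $\delta$ is a real algebraic curve. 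Moreover $\gamma$ is the image of the affine line over $\R$ (an irreducible variety, its coordinate ring $\R[t]$ being an integral domain) under the Zariski-continuous polynomial map $t \mapsto \big(p_1(t),p_2(t),p_3(t)\big)$; since the continuous image of an irreducible set is irreducible and the Zariski closure of an irreducible set is irreducible, $\delta$ is an irreducible real algebraic curve.

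Finally I would pin down $\deg \delta$. By Lemma \ref{4.1.1}, the irreducible real algebraic curve $\delta$ is contained in a unique irreducible complex algebraic curve in $\C^3$; since $\gamma_{\C}$ is an irreducible complex algebraic curve containing $\delta$, that unique curve is $\gamma_{\C}$ itself. Hence, by the definition of the degree of an irreducible real algebraic curve, $\deg \delta = \deg \gamma_{\C} \leq b$. Thus $\gamma \subseteq \delta$, where $\delta$ is an irreducible real algebraic curve in $\R^3$ of degree at most $b$, which is exactly the assertion. The only steps needing a little care are the irreducibility of $\delta$ (resting on "the image of an irreducible variety under a morphism is irreducible") and the transfer of the degree (resting on the uniqueness statement in Lemma \ref{4.1.1}, which in turn uses Corollary \ref{4.1.4}); everything else is immediate from the lemmas already in place, so I do not anticipate a serious obstacle here.
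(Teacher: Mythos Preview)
Your proof is correct and follows essentially the same route as the paper: complexify to $\gamma_{\C}$, use Lemma \ref{maybelast} to get a real algebraic set of dimension at most $1$ inside $\gamma_{\C}$, argue it is an irreducible real algebraic curve, and read off the degree from $\gamma_{\C}$ via the uniqueness in Lemma \ref{4.1.1}. The only cosmetic difference is that the paper works directly with $\gamma_{\C}\cap\R^3$ and proves its irreducibility by a short contradiction argument, whereas you take the Zariski closure $\delta$ of $\gamma$ and invoke the standard ``image of an irreducible variety is irreducible'' principle; both yield the same curve and the same conclusion.
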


\begin{proof} Let $\gamma_{\C}$ be the curve in $\C^3$, parametrised by $t \rightarrow \big(p_1(t)$, $p_2(t)$, $p_3(t)\big)$ for $t \in \C$. As we have already discussed, $\gamma_{\C}$ is the (unique) irreducible complex algebraic curve containing $\gamma$.

Clearly, $\gamma$ is contained in the intersection of $\gamma_{\C}$ with $\R^3$, which, by Lemma \ref{maybelast}, is a real algebraic set, of dimension at most 1. However, since $\gamma_{\C} \cap \R^3$ contains the parametrised curve $\gamma$, it has, in fact, dimension equal to 1. Therefore, $\gamma$ is contained in the real algebraic curve $\gamma_{\C}\cap\R^3$. In fact, $\gamma_{\C}\cap\R^3$ is an irreducible real algebraic curve. Indeed, if $\gamma ' \subsetneq \gamma_{\C}'\cap\R^3$ was an irreducible real algebraic curve, and $\gamma_{\C}'$ was the (unique) irreducible complex algebraic curve containing it, then $\gamma_{\C}'\cap \R^3 \supsetneq \gamma'=\gamma_{\C}'\cap \R^3$, and thus $\gamma_{\C}\cap \gamma_{\C}'$ $\subsetneq \gamma_{\C}$ would be a complex algebraic curve, which cannot hold, since $\gamma_{\C}$ is irreducible.

Moreover, since $\gamma_{\C}$ is an irreducible algebraic curve in $\C^3$, it is the smallest complex algebraic curve containing the real algebraic curve $\gamma_{\C} \cap \R^3$, and thus the degree of $\gamma_{\C}\cap\R^3$ is equal to $\deg \gamma_{\C}=\max\{\deg p_1, \deg p_2, \deg p_3\}$, and thus equal to at most $b$.

\end{proof}

Note that the Szemer\'edi-Trotter type theorem \ref{4.1.8} gives upper bounds on incidences between points and real algebraic curves in $\R^2$, of uniformly bounded degree. However, it cannot be extended to hold for a family of curves in $\R^2$ parametrised by real univariate polynomials of uniformly bounded degree, without extra hypotheses on the family of the cuvres. Indeed, half-lines in $\R^2$ are such curves, and if the Szemer\'edi-Trotter theorem held for any finite collection of half-lines in $\R^2$, then the Szemer\'edi-Trotter theorem for lines would be scale invariant, since there exist infinitely many distinct half-lines lying on the same line.

Similarly, there does not exist, in general, an upper bound on the number of critical curves parametrised by real univariate polynomials of uniformly bounded degree, contained in an algebraic hypersurface in $\R^3$.

\subsection{The general result}

We are now ready to formulate the following extension of Theorem \ref{1.1}.

\begin{theorem}\label{4.2.1} Let $b$ be a positive constant and $\Gamma$ a finite collection of real algebraic curves in $\R^3$, of degree at most $b$. Let $J$ be the set of joints formed by $\Gamma$. Then,
\begin{displaymath} \sum_{x \in J}N(x)^{1/2} \leq c_b \cdot |\Gamma|^{3/2},\end{displaymath}
where $c_b$ is a constant depending only on $b$. \end{theorem}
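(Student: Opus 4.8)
The plan is to run the proof of Theorem~\ref{1.1} almost verbatim, replacing each tool for lines by its real-algebraic-curve analogue from Section~\ref{4}.

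First I would reduce to the case that $\Gamma$ consists of irreducible real algebraic curves: a curve of degree $\le b$ splits into at most $b$ irreducible components, each of degree $\le b$, and passing to the collection of all components multiplies $|\Gamma|$ by at most $b$ while leaving the set $J$ of joints and every multiplicity $N(x)$ unchanged (the tangent directions at a point of a reducible curve are exactly the tangent directions there of the components through that point). I would also record the two elementary facts that keep the combinatorics intact: an irreducible curve of degree $\le b$ passes through any of its points with $\lesssim_b 1$ branches --- it crosses itself there $\lesssim_b 1$ times, cf. Lemma~\ref{4.1.7} and the discussion preceding Corollary~\ref{4.1.3} --- hence with $\lesssim_b 1$ tangent lines, so a joint lying on $\le 2k$ curves has $N(x)\lesssim_b k^3$; and, counting tangent directions rather than lines, Lemmas~\ref{3.1} and~\ref{3.2} hold with $b$-dependent constants.

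Next I would prove the analogue of Proposition~\ref{1.2}: for all $N,k\in\N$, $|J^k_N|\,N^{1/2}\le c_b\bigl(|\Gamma|^{3/2}k^{-\beta_b}+(|\Gamma|/k)N^{1/2}\bigr)$ for a suitable $\beta_b>0$, by induction on $|\Gamma|$. The dyadic summation over $N$ and $k$ at the end of Section~\ref{3} then deduces Theorem~\ref{4.2.1} from this; using $N(x)\lesssim_b k^3$ those sums converge for any fixed $\beta_b>0$, which is why the exponent of $k$ need not be exactly $1/2$. The induction step follows Section~\ref{3}: either Lemma~\ref{4.1.9}(ii) (Szemer\'edi--Trotter for curves) already gives the bound through its second term, or it fixes the degree $d$ of a Guth--Katz partition, and one then splits into a cellular and an algebraic case. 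In the algebraic case the two inputs ``a curve not contained in $Z$ meets $Z$ in $\lesssim_b d$ points'' (Corollary~\ref{4.1.3}) and ``$Z$ contains $\lesssim_b d^2$ critical irreducible curves'' (Corollary~\ref{4.1.14}) replace Theorem~\ref{2.2.1} and Proposition~\ref{2.2.3}, and the regular and critical subcases of Section~\ref{3} go through with only notational changes, each producing a power $k^{-\beta_b}$.

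The hard part will be the cellular case. For lines one argues that, unless some full cell of the partition contains few points of $\mathfrak{G}$ (which forces the bound directly), the number of line--cell-boundary incidences is simultaneously $\gtrsim d^3$ times a lower bound for $L_{\mathrm{cell}}$ and at most $Ld$, a contradiction once the partitioning constant is large; the lower bound for $L_{\mathrm{cell}}$ there uses the sharp Szemer\'edi--Trotter exponent $3$. For curves Lemma~\ref{4.1.9} only supplies the exponent $(2D_b-1)/(D_b-1)<3$, so this dichotomy degrades. I would therefore instead treat the cellular case by recursion: the joints in the interior of a cell are joints of the subcollection $\Gamma_{\mathrm{cell}}$ of curves meeting that cell, and since each curve meets $\lesssim_b d$ cells (Corollary~\ref{4.1.3} together with Lemmas~\ref{4.1.7} and~\ref{4.1.15}) one can discard the few cells in which $L_{\mathrm{cell}}$ is comparable to $|\Gamma|$ --- they carry only a negligible fraction of the points --- and apply the induction hypothesis in the remaining cells, where $L_{\mathrm{cell}}$ is a genuine fraction of $|\Gamma|$. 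Arranging the choices of $d$ and of the truncation threshold so that the constants produced by this recursion still close the induction with the stated $c_b$ is the delicate point, and the one I expect to absorb most of the work; the remaining cases are a routine transcription of Section~\ref{3}.
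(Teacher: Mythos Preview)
Your reduction to irreducible curves, the analogues of Lemmas~\ref{3.1} and~\ref{3.2}, the induction scheme on $|\Gamma|$, and your treatment of the algebraic case all match the paper's argument (Proposition~\ref{4.2.4}, Lemmas~\ref{4.2.2}, \ref{4.2.3}); the exponent you call $\beta_b$ is $1/(2D_b-2)$ there, and the final dyadic summation is indeed identical to Section~\ref{3}.

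The genuine divergence is the cellular case. The paper does \emph{not} recurse there; it mirrors the lines argument and derives a direct contradiction by choosing the partitioning constant $A_b$ large. Two obstacles not present for lines are handled explicitly. First, a curve meeting the interior of a cell need not touch that cell's boundary (a bounded component of an irreducible curve can sit entirely inside one cell), so $I_{\text{cell}}\ge|\Gamma_{\text{cell}}|$ can fail. The paper repairs this by building into the partitioning polynomial six generic planes containing the faces of a large cube enclosing $\mathfrak{G}$; every complexified curve $\gamma_{\C}$ is then forced to meet the complexified zero set $Z_{\C}$, and the incidence count is carried out against $Z_{\C}$, with Lemma~\ref{4.1.15} controlling how often a single curve can be ``trapped'' inside cells. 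Second, the weaker Szemer\'edi--Trotter exponent $(2D_b-1)/(D_b-1)$ is absorbed not by changing the overall architecture but by replacing the threshold $k$ with $k^{1/(D_b-1)}$ in the ``few points in some full cell'' dichotomy and by a direct double-count giving $|\Gamma_{\text{cell}}|\gtrsim_b k^{D_b/(D_b-1)}$.

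Your recursion-in-cells alternative has a real difficulty closing the constant. Each curve meets $\lesssim_b d$ cells, so $\sum_{\text{cells}}|\Gamma_{\text{cell}}|\lesssim_b |\Gamma|\,d$; summing the inductive bound over cells therefore produces $c_b\cdot C_b\, d\cdot|\Gamma|/k$ from the second term, and even after discarding cells with $|\Gamma_{\text{cell}}|>|\Gamma|/C$ you get at best $c_b\cdot C_b'\,(d/\sqrt{C})\,|\Gamma|^{3/2}/k^{\beta_b}$ from the first. Since $d=A_b|\Gamma|^2S^{-1}k^{-(2D_b-1)/(D_b-1)}$ grows with the data, neither factor is $O(1)$, and the induction does not close with the same $c_b$. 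Recursive cell arguments can be made to work (typically with $d$ held to a fixed constant and an $\epsilon$-loss in the exponent), but that is not the route the paper takes, and as written your sketch leaves a gap at precisely the point you flagged as delicate.
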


The proof of Theorem \ref{4.2.1} is completely analogous to the proof of Theorem \ref{1.1}. Indeed, if $\gamma$ is a real algebraic curve in $\R^3$, of degree at most $b$, and $x\in \gamma$ is not an isolated point of $\gamma$, then $\gamma$ crosses itself at $x$ at most $b$ times, while there exists at least one tangent line to $\gamma$ at $x$; thus, there exist at least 1 and at most $b$ tangent lines to $\gamma$ at $x$. So, if $x$ is a joint of multiplicity $N$ for $\Gamma$, such that at most $k$ curves of $\Gamma$, of which $x$ is not an isolated point, are passing through $x$, then $N \leq (bk)^3$. Therefore, the following lemmas hold, whose statements and proofs are analogous to those of Lemmas \ref{3.1} and \ref{3.2}.

\begin{lemma}\label{4.2.2} Let $x$ be a joint of multiplicity $N$ for a finite collection $\Gamma$ of real algebraic curves in $\R^3$, of degree at most $b$. Suppose that $x$ lies in $\leq 2k$ of the curves in $\Gamma$ of which it is not an isolated point. If, in addition, $x$ is a joint of multiplicity $\leq N/2$ for a subcollection $\Gamma '$ of $\Gamma$, or if it is not a joint at all for the subcollection $\Gamma '$, then there exist $\geq \frac{N}{1000b^3\cdot k^2}$ curves of $\Gamma \setminus \Gamma '$, of which $x$ is not an isolated point, passing through $x$. \end{lemma}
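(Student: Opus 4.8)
The plan is to mirror the proof of Lemma \ref{3.1}, the only new ingredient being that a real algebraic curve of degree at most $b$ contributes a bounded number of tangent directions at any of its non-isolated points. First I would record that observation: if $\gamma \in \Gamma$ has degree at most $b$ and $x \in \gamma$ is not an isolated point of $\gamma$, then by Lemma \ref{4.1.7} (together with the interpretation of self-crossings via the projection to a generic plane) $\gamma$ crosses itself at $x$ at most $b$ times, and each branch through $x$ has at least one tangent line there; hence $\gamma$ contributes at least one and at most $b$ directions to $T_x^{\Gamma}$. Consequently, since $x$ lies in at most $2k$ curves of $\Gamma$ of which it is not an isolated point, $|T_x^{\Gamma}| \leq 2bk$, and so the multiplicity $N = N(x)$ satisfies $N \leq \binom{2bk}{3} \leq 8b^3k^3$.

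Next I would set up the dichotomy exactly as in Lemma \ref{3.1}. Let $A$ be the number of curves of $\Gamma \setminus \Gamma'$ passing through $x$ of which $x$ is not an isolated point, and assume for contradiction that $A < \frac{N}{1000 b^3 k^2}$. Write $T_x^{\Gamma'}$ for the set of directions in $T_x^{\Gamma}$ that are tangent at $x$ to some curve of $\Gamma'$; every direction in $T_x^{\Gamma} \setminus T_x^{\Gamma'}$ is then tangent at $x$ to some curve of $\Gamma \setminus \Gamma'$ of which $x$ is not isolated, so $|T_x^{\Gamma} \setminus T_x^{\Gamma'}| \leq bA$, while $|T_x^{\Gamma'}| \leq 2bk$. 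Now I would split the $N$ linearly independent triples of directions in $T_x^{\Gamma}$ according to how many of the three members lie in $T_x^{\Gamma} \setminus T_x^{\Gamma'}$: the triples with all three in $T_x^{\Gamma'}$ number exactly the multiplicity of $x$ as a joint for $\Gamma'$, which is $\leq N/2$ by hypothesis (and is $0$ if $x$ is not a joint for $\Gamma'$, since any spanning triple in $T_x^{\Gamma'}$ would make it one), while the remaining three classes are bounded above by $\binom{bA}{1}\binom{2bk}{2}$, $\binom{bA}{2}\binom{2bk}{1}$ and $\binom{bA}{3}$ respectively.

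Using $A < \frac{N}{1000 b^3 k^2}$ together with $N \leq 8 b^3 k^3$, each of these last three quantities is $< N/8$ — the computation is identical to the one in Lemma \ref{3.1}, with $k$ replaced by $bk$ on the ``$\Gamma'$-side'' counts and $A$ replaced by $bA$ on the ``$(\Gamma \setminus \Gamma')$-side'' counts, the extra powers of $b$ being exactly what the constant $1000 b^3 k^2$ is designed to absorb. This gives $N \leq \frac{N}{2} + \frac{N}{8} + \frac{N}{8} + \frac{N}{8} < N$, a contradiction, so $A \geq \frac{N}{1000 b^3 k^2}$. I do not expect a genuine obstacle here: the only points needing care are that ``triples entirely inside $T_x^{\Gamma'}$'' is literally the multiplicity of $x$ for $\Gamma'$ and hence controlled by the hypothesis, and that a direction lying in both $T_x^{\Gamma'}$ and $T_x^{\Gamma} \setminus T_x^{\Gamma'}$ cannot occur by construction (while a line tangent at $x$ to curves on both sides simply makes $T_x^{\Gamma}\setminus T_x^{\Gamma'}$ smaller, which only helps). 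Everything else is the arithmetic of Lemma \ref{3.1}.
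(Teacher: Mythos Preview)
Your proposal is correct and follows essentially the same route as the paper, which does not give an explicit proof but simply states that the argument is ``analogous to [that of] Lemma \ref{3.1}'' once one knows that each curve of degree at most $b$ contributes at most $b$ tangent directions at a non-isolated point; your splitting of $T_x^{\Gamma}$ into $T_x^{\Gamma'}$ and its complement, with $|T_x^{\Gamma}\setminus T_x^{\Gamma'}|\le bA$ and $|T_x^{\Gamma'}|\le 2bk$, is exactly the intended adaptation, and the arithmetic goes through verbatim with $k$ replaced by $bk$. One small remark: the bound ``$\gamma$ crosses itself at $x$ at most $b$ times'' is not Lemma \ref{4.1.7} (which bounds the \emph{total} number of self-crossings by $4(\deg\overline{\pi(\gamma)})^2$), but rather the observation recorded just before Corollary \ref{4.1.3}, that a real algebraic curve crosses itself at any single point at most as many times as its degree.
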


\begin{lemma}\label{4.2.3} Let $x$ be a joint of multiplicity $N$ for a finite collection $\Gamma$ of real algebraic curves in $\R^3$, of degree at most $b$. Suppose that $x$ lies in $\leq 2k$ of the curves in $\Gamma$ of which it is not an isolated point. Then, for every plane containing $x$, there exist $\geq \frac{N}{1000b^3 \cdot k^2}$ curves in $\Gamma$, such that their tangent vectors at $x$ are well-defined and not parallel to the plane. \end{lemma}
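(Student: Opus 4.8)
The plan is to transcribe the proof of Lemma~\ref{3.2} almost verbatim, with Lemma~\ref{4.2.2} playing the role that Lemma~\ref{3.1} played there. Fix a plane $H$ containing $x$, and let $\Gamma'$ be the set of those curves $\gamma \in \Gamma$ such that $x \in \gamma$, the point $x$ is not an isolated point of $\gamma$, and every tangent line to $\gamma$ at $x$ is parallel to $H$. The purpose of this definition is the following observation: every tangent direction contributed to $x$ by a curve of $\Gamma'$ then lies in the two-dimensional space of directions parallel to $H$, so $T_x^{\Gamma'}$ contains no three vectors spanning $\R^3$; hence $x$ is not a joint for the subcollection $\Gamma'$ at all.

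Next I would apply Lemma~\ref{4.2.2} to the collections $\Gamma$ and $\Gamma'$ and the point $x$ — which is legitimate, since by hypothesis $x$ lies in at most $2k$ curves of $\Gamma$ of which it is not an isolated point, and since we have just seen that $x$ is not a joint for $\Gamma'$, so the second alternative in the hypothesis of Lemma~\ref{4.2.2} is met. This yields $\geq \frac{N}{1000 b^3 \cdot k^2}$ curves of $\Gamma \setminus \Gamma'$ passing through $x$, of which $x$ is not an isolated point. For each such curve $\gamma$, the fact that $\gamma \notin \Gamma'$ while $x \in \gamma$ is a non-isolated point of $\gamma$ forces, by the very definition of $\Gamma'$, that $\gamma$ has at least one tangent line at $x$ that is not parallel to $H$; equivalently, $\gamma$ possesses a well-defined tangent vector at $x$ not parallel to $H$. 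Counting these curves gives the asserted bound.

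I do not expect a genuine obstacle here: the argument is routine once Lemma~\ref{4.2.2} is available. The one point meriting a moment's care is precisely the reduction that licenses the application of Lemma~\ref{4.2.2}, namely that $x$ is not a joint for $\Gamma'$; this follows because $T_x^{\Gamma'}$ is confined to a plane, and it is exactly one of the two cases allowed in the hypothesis of Lemma~\ref{4.2.2}. It is also worth keeping in mind that a real algebraic curve of degree $\leq b$ may cross itself at the non-isolated point $x$ (hence have several tangent lines there), so that membership in $\Gamma'$ must be phrased in terms of \emph{all} tangent lines at $x$ lying in $H$, and correspondingly $\gamma \notin \Gamma'$ translates into the existence of \emph{some} tangent line to $\gamma$ at $x$ off $H$ — which is exactly what the conclusion of the lemma requires. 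The remainder is identical to the proof of Lemma~\ref{3.2}.
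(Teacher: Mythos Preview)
Your proposal is correct and follows exactly the approach the paper indicates: the paper does not write out a separate proof of Lemma~\ref{4.2.3} but states that its proof is analogous to that of Lemma~\ref{3.2}, and your argument is precisely that analogue, with Lemma~\ref{4.2.2} replacing Lemma~\ref{3.1} and with the appropriate care taken over the possibility of multiple tangent lines at a self-crossing.
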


Now, for a collection $\Gamma$ of real algebraic curves in $\R^3$, if $J$ is the set of joints formed by $\Gamma$, we define

$J_N:=\{x \in J: N \leq N(x) <2N\}$, for all $N \in \N$, and

$J_N^{k}:=\{x \in J_N$: $x$ intersects at least $k$ and fewer than $2k$ curves of $\Gamma$ of which $x$ is not an isolated point$\}$, for all $N$, $k \in \N$.

Then, Theorem \ref{4.2.1} easily follows from Proposition \ref{4.2.4}, the statement and a sketch of the proof of which we now present.

\begin{proposition}\label{4.2.4} Let $b \in \N$ and $\Gamma$ a finite collection of real algebraic curves in $\R^3$, of degree at most $b$. Then, \begin{displaymath} |J^{k}_{N}| \cdot N^{1/2} \leq c_b \cdot \bigg(\frac{|\Gamma|^{3/2}}{k^{1/(2D_b-2)}}+ \frac{|\Gamma|}{k}\cdot N^{1/2}\bigg), \end{displaymath}
where $D_b$ and $c_b$ are constants depending only on $b$ (and, in particular, $D_b\geq b^2+1$). \end{proposition}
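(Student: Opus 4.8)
The plan is to repeat the proof of Proposition~\ref{1.2} essentially verbatim, running an induction on $|\Gamma|$ and, at the inductive step, applying the Guth--Katz polynomial method (Theorem~\ref{2.1.3}) to the point set $\mathfrak{G}:=J^{k}_{N}$. The only structural change is that every appeal to the classical Szemer\'edi--Trotter theorem is replaced by Lemma~\ref{4.1.9}; this forces the partitioning degree to be chosen of the shape $d\sim A\,|\Gamma|^{2}|\mathfrak{G}|^{-1}k^{-\alpha}$ with $\alpha:=\tfrac{2D_b-1}{D_b-1}\in(2,3]$, where $D_b\ge b^{2}+1$ is the constant of Lemma~\ref{4.1.9} (for lines $D_b=2$, $\alpha=3$, and one is back in the setting of Proposition~\ref{1.2}), and it is precisely the possibility that $\alpha<3$ that degrades the exponent $\tfrac12$ appearing in Proposition~\ref{1.2} to $\tfrac{1}{2D_b-2}$ (the two agreeing when $D_b=2$). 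Concretely: one first disposes of the case $|\mathfrak{G}|\lesssim_b|\Gamma|k^{-1}$, which by Lemma~\ref{4.1.9}(ii) gives at once $|\mathfrak{G}|\cdot N^{1/2}\lesssim_b(|\Gamma|/k)\,N^{1/2}$ --- the second term of the claimed bound --- so that afterwards $d>1$ for $A$ large; one then splits, exactly as in Proposition~\ref{1.2}, into a cellular and an algebraic case according to whether $\gtrsim|\mathfrak{G}|$ of the points of $\mathfrak{G}$ lie in the open cells cut out by the zero set $Z$ of $p$ or on $Z$ itself. Throughout, Theorem~\ref{2.2.1} is replaced by Corollary~\ref{4.1.3} (a curve of degree $\le b$ not contained in $Z$ meets $Z$ in $\lesssim_b\deg p$ points), Proposition~\ref{2.2.3} by Corollary~\ref{4.1.14} ($Z$ carries $\le(\deg p)^{2}$ critical irreducible real algebraic curves), and Lemmas~\ref{3.1}--\ref{3.2} by Lemmas~\ref{4.2.2}--\ref{4.2.3}; and every incidence count is restricted to those curves of which the point in question is \emph{not} an isolated point, as in the statements of those lemmas and of Lemma~\ref{4.1.9}.

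In the cellular case slightly more care is needed than for lines. Calling a cell \emph{full} if it contains $\gtrsim|\mathfrak{G}|d^{-3}$ points of $\mathfrak{G}$, there are $\gtrsim d^{3}$ full cells. If some full cell contains $\ge k$ points of $\mathfrak{G}$, then, running the incidence argument of the cellular case of Proposition~\ref{1.2} with $\alpha$ in place of $3$ and bounding the intersections of $Z$ with the curves not contained in $Z$ via Corollary~\ref{4.1.3}, one produces for $A$ large strictly more such incidences than $Z$ can support, a contradiction. Otherwise every full cell contains $<k$ points of $\mathfrak{G}$, and --- unlike for lines, where the crude estimate $|\mathfrak{G}|d^{-3}\lesssim k$ would already close the sub-case --- one now applies Lemma~\ref{4.1.9}(ii) \emph{inside each full cell} and sums over the full cells, using that every curve not contained in $Z$ is cut by $Z$ into $\lesssim_b d$ arcs (Corollary~\ref{4.1.3} together with Lemma~\ref{4.1.15}) and hence distributes over $\lesssim_b d$ cells; combined with $|\mathfrak{G}|\lesssim_b|\Gamma|^{2}k^{-\alpha}$ this gives $|\mathfrak{G}|\lesssim_b|\Gamma|^{3/2}k^{-(\alpha+1)/2}$, whence $|\mathfrak{G}|\cdot N^{1/2}\lesssim_b|\Gamma|^{3/2}k^{-(\alpha+1)/2+3/2}=|\Gamma|^{3/2}k^{-1/(2D_b-2)}$ after invoking $N\lesssim_b k^{3}$.

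In the algebraic case one follows the proof of Proposition~\ref{1.2} step by step. Passing to $\mathfrak{G}_1:=\mathfrak{G}\cap Z$, extracting the subcollection $\Gamma'\subset\Gamma$ of curves each containing $\gtrsim|\mathfrak{G}|k|\Gamma|^{-1}$ points of $\mathfrak{G}_1$, then the subset $\mathfrak{G}'\subset\mathfrak{G}_1$ of points lying on $\gtrsim k$ curves of $\Gamma'$, and splitting $\mathfrak{G}_1$ into regular and critical points of $Z$, one argues exactly as there. The regular subcase is handled by Lemma~\ref{4.2.3} and Corollary~\ref{4.1.3}, and --- together with the subcase $|\Gamma'|\ge|\Gamma|/100$ of the critical subcase, which uses Corollary~\ref{4.1.14} in place of Proposition~\ref{2.2.3} --- it is here that the exponent $\tfrac{1}{2D_b-2}$ is actually attained; the remaining critical subcases only produce larger negative powers of $k$. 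In the subcase $|\Gamma'|<|\Gamma|/100$ one invokes the induction hypothesis for the strictly smaller collection $\Gamma'$: one splits the critical points of $\mathfrak{G}'$ into those that are not joints of $\Gamma'$, or are joints of $\Gamma'$ of multiplicity $<N/2$ --- handled by Lemma~\ref{4.2.2} with Corollaries~\ref{4.1.3} and~\ref{4.1.14} --- and those that are joints of $\Gamma'$ of multiplicity comparable to $N$, for which one further dyadically subdivides according to the exact multiplicity and the exact number of curves of $\Gamma'$ through the point, applies the induction hypothesis to $\Gamma'$, and checks that the factors $(1/100)^{3/2}$ and $1/100$ absorb the finitely many combinatorial constants lost in the subdivisions. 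Taking $c_b$ to be the maximum of the constants produced in the finitely many cases closes the induction.

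The real content is bookkeeping: one must verify that the \emph{same} exponent $\tfrac{1}{2D_b-2}$ emerges from every branch, and in particular that the cellular case --- where, unlike for lines, the crude estimate is too lossy and one genuinely needs the summed form of Lemma~\ref{4.1.9}(ii) over the cells --- and the subcases invoking Corollary~\ref{4.1.14} both deliver it. One also has to keep the ``non-isolated-point'' qualification attached to every incidence count so that Lemmas~\ref{4.2.2}--\ref{4.2.3} and Lemma~\ref{4.1.9} apply, and to note that passing from ``critical curves of degree $\le b$ contained in $Z$'' to ``critical irreducible real algebraic curves in $Z$'' costs only a factor $\lesssim_b 1$, which is harmless. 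The algebraic-geometric input that makes the scheme work --- Corollaries~\ref{4.1.3} and~\ref{4.1.14} and Lemma~\ref{4.1.9} --- has already been assembled in Section~\ref{4}, so none of this is conceptually new.
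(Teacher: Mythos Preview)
Your outline of the algebraic case is sound and matches the paper. The cellular case, however, has a genuine gap.

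You place the dichotomy at the threshold $k$ and, in the branch ``every full cell carries $<k$ points of $\mathfrak{G}$'', you propose to apply Lemma~\ref{4.1.9}(ii) cell by cell and sum. This does not close. Writing $\alpha=\tfrac{2D_b-1}{D_b-1}$, the lemma gives $S_{cell}\lesssim_b|\Gamma_{cell}|^{2}k^{-\alpha}+|\Gamma_{cell}|k^{-1}$; summing and using $\sum_{cell}|\Gamma_{cell}|\lesssim_b|\Gamma|\,d$ yields
\[
S\;\lesssim_b\;k^{-\alpha}\sum_{cell}|\Gamma_{cell}|^{2}\;+\;|\Gamma|\,d\,k^{-1}.
\]
The linear contribution, after substituting $d\sim|\Gamma|^{2}S^{-1}k^{-\alpha}$, does yield $S\lesssim_b|\Gamma|^{3/2}k^{-(\alpha+1)/2}$ as you claim. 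But the only available control on the quadratic sum is $\sum|\Gamma_{cell}|^{2}\le|\Gamma|\sum|\Gamma_{cell}|\lesssim_b|\Gamma|^{2}d$, and feeding that back returns merely the a~priori estimate $S\lesssim_b|\Gamma|^{2}k^{-\alpha}$ --- no gain. The hypothesis $S_{cell}<k$ does \emph{not} force the linear term of Lemma~\ref{4.1.9}(ii) to dominate; that would need $S_{cell}<k^{1/(D_b-1)}$. Nothing in ``combined with $|\mathfrak{G}|\lesssim_b|\Gamma|^{2}k^{-\alpha}$'' repairs this. (There is also a quantifier slip in your other branch: the contradiction argument of Proposition~\ref{1.2} requires \emph{every} full cell to be rich, not merely some, since one must sum the cell contributions over $\gtrsim d^{3}$ cells.)

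The paper's remedy is to set the threshold at $k^{1/(D_b-1)}$ rather than $k$. If some full cell has fewer than $k^{1/(D_b-1)}$ points, the crude bound $Sd^{-3}\lesssim k^{1/(D_b-1)}$ already gives $S\cdot N^{1/2}\lesssim_b|\Gamma|^{3/2}k^{-1/(2D_b-2)}$ directly. If every full cell has at least $k^{1/(D_b-1)}$ points, one first shows --- by a short direct incidence count on any $k^{1/(D_b-1)}$ points of the cell, using that through any $D_b$ points there passes at most one curve of $\Gamma$ --- that $|\Gamma_{cell}|\gtrsim_b k^{D_b/(D_b-1)}$; this forces the quadratic term of Lemma~\ref{4.1.9}(ii) to dominate, hence $|\Gamma_{cell}|\gtrsim_b S^{1/2}d^{-3/2}k^{\alpha/2}$, and summing this lower bound over the $\gtrsim d^{3}$ full cells yields the contradiction for $A$ large. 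There is a further subtlety here that your sketch does not flag: unlike a line, a curve in $\Gamma_{cell}$ need not meet the boundary of the cell at all (a bounded component can sit entirely inside), so relating $\sum_{cell}|\Gamma_{cell}|$ to curve--$Z$ incidences takes extra care. The paper handles this by building six generic planes into $Z$ from the outset (the faces of a cube enclosing $\mathfrak{G}$), passing to the containing irreducible complex curves, and invoking Lemma~\ref{4.1.15}.
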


\begin{proof} Each real algebraic curve in $\R^3$ of degree at most $b$ consists of $\leq b \lesssim _b 1$ irreducible components; we may therefore assume that each $\gamma \in \Gamma$ is irreducible. 

Keeping in mind that a real algebraic curve of degree at most $b$ in $\R^3$ cosses itself at a point $x$ at most $b$ times, and therefore the number of tangent lines to $\gamma$ at $x$ is at most $b$, the proof is completely analogous to that of Proposition 1.2. The main differences lie at the beginning and the cellular case, we thus go on to point them out.

By Lemma \ref{4.1.6}, there exists an integer $C_b\geq b$, such that, if $\gamma$ is a real algebraic curve in $\R^3$, of degree at most $b$, then the projection of $\gamma$ on a generic plane is contained in a planar real algebraic curve, of degree $\leq C_b$.

Therefore, by Lemmas \ref{4.1.7} and \ref{4.1.9}, the integer $D_b:=C_b^2+1$ has the following properties.

(i) If $\gamma$ is a real algebraic curve in $\R^3$, of degree at most $b$, then $\gamma$ crosses itself at most $4D_b$ times.

(ii) There exists at most 1 real algebraic curve in $\R^3$, of degree at most $b$, passing through any fixed $D_b$ points in $\R^3$.

(iii) For any finite collection $\Gamma$ of real algebraic curves in $\R^3$, of degree at most $b$, it holds that $|J_N^k|\lesssim_b |\Gamma| ^2/k^{(2D_b-1)/(D_b-1)} + |\Gamma|/k$.

This will be the integer $D_b$ appearing in the statement of the Proposition.

Now, the proof of the Proposition will be achieved by induction on the cardinality of $|\Gamma|$. Indeed, let $M \in \N$. For $c_b$ an explicit constant $\geq D_b$, which depends only on $b$ and will be specified later:

- For any collection $\Gamma$ of irreducible real algebraic curves in $\R^3$, of degree at most $b$, such that $|\Gamma|=1$, it holds that \begin{displaymath} |J^k_{N}|\cdot N^{1/2} \leq c_b \cdot \bigg( \frac{1^{3/2}}{k^{1/(2D_b-2)}} + \frac{1}{k}\cdot N^{1/2}\bigg), \; \forall \; N, \;k \in \N\end{displaymath} (this is obvious, in fact, for any $c_b \geq 4D_b$, as in this case $|J_{N}|=|J_N^1|\leq 4D_b$ for all $N \in \N$, since a real algebraic curve in $\R^3$, of degree at most $b$, crosses itself at most $4D_b$ times). 

- We assume that \begin{displaymath} |J_N^{k}| \cdot N^{1/2} \leq c_b \cdot \bigg(\frac{|\Gamma|^{3/2}}{k^{1/(2D_b-2)}} + \frac{|\Gamma|}{k}\cdot N^{1/2}\bigg), \; \forall \; N,\;k \in \N,\end{displaymath} for any finite collection $\Gamma $ of irreducible real algebraic curves in $\R^3$, of degree at most $b$, such that $|\Gamma| \lneq M.$

- We will now prove that  \begin{equation} |J_N^{k}| \cdot N^{1/2} \leq c_b \cdot \bigg(\frac{|\Gamma|^{3/2}}{k^{1/(2D_b-2)}} + \frac{|\Gamma|}{k}\cdot N^{1/2}\bigg),\; \forall \; N, \;k \in \N, \label{eq:final'}\end{equation}
for any collection $\Gamma$ of irreducible real algebraic curves in $\R^3$, of degree at most $b$, such that $|\Gamma|=M$.

Indeed, let $\Gamma$ be a collection of irreducible real algebraic curves in $\R^3$, of degree at most $b$, such that $|\Gamma|=M$.

Fix $N$ and $k$ in $\N$, and let \begin{displaymath}\mathfrak{G}:=J_{N}^k\end{displaymath} and $$ S:=|J_{N}^k|$$ for this collection $\Gamma$.

Now, we know that $S\cdot N^{1/2} \leq c_{0,b} \cdot (|\Gamma| ^2/k^{(2D_b-1)/(D_b-1)} + |\Gamma|/k)$ for some constant $c_{0,b}$ depending only on $b$. Thus:

If $\frac{S}{2}\leq c_{0,b}\cdot  \frac{|\Gamma|}{k}$, then $S \cdot N^{1/2} \leq 2c_{0,b} \cdot \frac{|\Gamma|}{k}\cdot N^{1/2}$ (where $2c_{0,b}$ is a constant depending only on $b$). 

Otherwise, $\frac{S}{2}< c_{0,b} \cdot|\Gamma| ^2/k^{(2D_b-1)/(D_b-1)}$, so $S < 2c_{0,b} \cdot  |\Gamma|^2 k^{-(2D_b-1)/(D_b-1)}$. 

Therefore, $d:=A_b|\Gamma|^2S^{-1}k^{-(2D_b-1)/(D_b-1)}$ is a quantity $>1$ whenever $A_b\geq 2c_{0,b}$; we thus choose $A_b$ to be large enough for this to hold, and we will specify its value later. Now, applying the Guth-Katz polynomial method for this $d>1$ and the finite set of points $\mathfrak{G}$, we deduce that there exists a non-zero polynomial $p\in \R[x,y,z]$, of degree $\leq d$, whose zero set $Z$:

(i) decomposes $\R^3$ in $\lesssim d^3$ cells, each of which contains $\lesssim Sd^{-3}$ points of $\mathfrak{G}$, and

(ii) contains 6 distinct generic planes, each of which contains a face of a fixed cube $Q$ in $\R^3$, such that the interior of $Q$ contains $\mathfrak{G}$ (and each of the planes is generic in the sense that the plane in $\C^3$ containing it intersects the smallest complex algebraic curve in $\C^3$ containing $\gamma$, for all $\gamma \in \Gamma$);

to achieve this, we first fix a cube $Q$ in $\R^3$, with the property that its interior contains $\mathfrak{G}$ and the planes containing its faces are generic in the above sense. Then, we multiply the polynomials we end up with at each step of the Guth-Katz polynomial method with the same (appropriate) six linear polynomials, the zero set of each of which is a plane containing a different face of the cube, and stop the application of the method when we finally get a polynomial of degree at most $d$, whose zero set decomposes $\R^3$ in $\lesssim d^3$ cells (the set of the cells now consists of the non-empty intersections of the interior of the cube $Q$ with the cells that arise from the application of the Guth-Katz polynomial method, as well as the complement of the cube).

We can assume that the polynomial $p$ is square-free, as eliminating the squares of $p$ does not inflict any change on its zero set.

If there are $\geq 10^{-8}S$ points of $\mathfrak{G}$ in the union of the interiors of the cells, we are in the cellular case. Otherwise, we are in the algebraic case.

\textbf{Cellular case:} There are $\gtrsim S$ points of $\mathfrak{G}$ in the union of the interiors of the cells. However, we also know that there exist $\lesssim d^3$ cells in total, each containing $\lesssim Sd^{-3}$ points of $\mathfrak{G}$. Therefore, there exist $\gtrsim d^3$ cells, with $\gtrsim Sd^{-3}$ points of $\mathfrak{G}$ in the interior of each. We call the cells with this property ``full cells". Now:

$\bullet$ If the interior of some full cell contains $< k^{1/(D_b-1)}$ points of $\mathfrak{G}$, then $Sd^{-3} \lesssim k^{1/(D_b-1)}$, and since $N \lesssim b^3 k^3\lesssim_b k^3$, we have that $S \cdot N^{1/2} \lesssim_b |\Gamma|^{3/2}/k^{1/(2D_b-2)}$.

$\bullet$ If the interior of each full cell contains $\geq k^{1/(D_b-1)}$ points of $\mathfrak{G}$, then we will be led to a contradiction by choosing $A_b$ sufficiently large. Indeed:

Consider a full cell and let $\mathfrak{G}_{cell}$ be the set of points of $\mathfrak{G}$ lying in the interior of the cell, $S_{cell}$ the cardinality of $\mathfrak{G}_{cell}$ and $\Gamma_{cell}:= \{\gamma \in \Gamma:\exists \; x \in \gamma\cap \mathfrak{G}_{cell}$, such that $x$ is not an isolated point of $\gamma\}$.

Let $\mathfrak{G}_{cell}'$ be a subset of $\mathfrak{G}_{cell}$ of cardinality $k^{1/(D_b-1)}$. Since each point of $\mathfrak{G}_{cell}$ has at least $k$ curves of $\Gamma_{cell}$ passing through it, there exist at least $k^{D_b/(D_b-1)}$ incidences between $\Gamma_{cell}$ and $\mathfrak{G}_{cell}'$. On the other hand, the curves in $\Gamma_{cell}$ containing at most $D_b-1$ points of $\mathfrak{G}_{cell}'$ contribute at most $(D_b-1) \cdot|\Gamma_{cell}|$ incidences with $\mathfrak{G}_{cell}'$, while through any fixed point of $\mathfrak{G}'_{cell}$ there exist at most $\binom{|\mathfrak{G}_{cell}'|}{D_b-1}$ curves in $\Gamma_{cell}$, each containing at least $D_b$ points of $\mathfrak{G}_{cell}'$, since there exists at most 1 curve in $\Gamma$ passing through any fixed $D_b$ points in $\R^3$; therefore, there exist at most $\binom{|\mathfrak{G}_{cell}'|}{D_b-1}\cdot |\mathfrak{G}_{cell}'|$ incidences between $\mathfrak{G}_{cell}'$ and the curves in $\Gamma _{cell}$, each of which contains at least $D_b$ points of $\mathfrak{G}_{cell}'$. Thus, 
\begin{displaymath}k^{D_b/(D_b-1)} \leq I_{\mathfrak{G}_{cell}', \Gamma_{cell}} \leq (D_b-1) \cdot |\Gamma_{cell}| +\binom{k^{1/(D_b-1)}}{D_b-1} \cdot k^{1/(D_b-1)} \leq \end{displaymath} \begin{displaymath} \leq (D_b-1) \cdot |\Gamma_{cell}| +\frac{1}{(D_b-1)!}\cdot k^{D_b/(D_b-1)}\text{, so}\end{displaymath} \begin{displaymath}|\Gamma_{cell}| \gtrsim_b k^{D_b/(D_b-1)}, \end{displaymath}and thus \begin{displaymath} |\Gamma_{cell}|^2/k^{(2D_b-1)/(D_b-1)} \gtrsim_b |\Gamma_{cell}| /k.\end{displaymath} Note that this approach differs to the one applied in the case of joints formed by lines.

Now, due to our definition of $D_b$ and the fact that each of the points in $\mathfrak{G}_{cell}$ has at least $k$ curves of $\Gamma_{cell}$ passing through it, of each of which it is not an isolated point, we obtain (since $k \geq 3$, and thus $\geq 2$), 
\begin{displaymath} S_{cell} \lesssim_b |\Gamma_{cell}|^2/k^{(2D_b-1)/(D_b-1)} +|\Gamma_{cell}|/ k.\end{displaymath}
Therefore, $S_{cell} \lesssim_b |\Gamma_{cell}|^2/k^{(2D_b-1)/(D_b-1)}$, so, since we are working in a full cell, $Sd^{-3} \lesssim_b |\Gamma_{cell}|^2/k^{(2D_b-1)/(D_b-1)}$, and rearranging we see that 
\begin{displaymath} |\Gamma_{cell}| \gtrsim_b S^{1/2}d^{-3/2}k^{(2D_b-1)/(2D_b-2)}.\end{displaymath}
Furthermore, let $\Gamma_Z$ be the set of curves of $\Gamma$ which are lying in $Z$. Obviously, $\Gamma_{cell} \subset \Gamma \setminus \Gamma_Z$. Moreover, let $\Gamma_{cell}'$ be the set of curves in $\Gamma_{cell}$ such that, if $\gamma \in \Gamma_{cell}'$, there does not exist any point $x$ in the intersection of $\gamma$ with the boundary of the cell, with the property that the induced topology from $\R^3$ to the intersection of $\gamma$ with the closure of the cell contains some open neighbourhood of $x$. Finally, let $I_{cell}$ denote the number of incidences between the boundary of the cell and the curves in $\Gamma_{cell} \setminus \Gamma'_{cell}$.

Now, each of the curves in $\Gamma_{cell}\setminus \Gamma'_{cell}$ intersects the boundary of the cell at at least one point $x$, with the property that the induced topology from $\R^3$ to the intersection of the curve with the closure of the cell contains an open neighbourhood of $x$; therefore, $I_{cell}\geq |\Gamma_{cell}\setminus \Gamma'_{cell}|$ ($=|\Gamma_{cell}|-|\Gamma'_{cell}|$). Also, the union of the boundaries of all the cells is the zero set $Z$ of $p$, and if $x$ is a point of $Z$ which belongs to a curve in $\Gamma$ intersecting the interior of a cell, such that the induced topology from $\R^3$ to the intersection of the curve with the closure of the cell contains an open neighbourhood of $x$, then there exist at most $2b-1$ other cells whose interior is also intersected by the curve and whose boundary contains $x$, such that the induced topology from $\R^3$ to the intersection of the curve with the closure of each of these cells contains some open neighbourhood of $x$. So, if $I$ is the number of incidences between $Z$ and $\Gamma \setminus \Gamma_Z$, and $\mathcal{C}$ is the set of all the full cells (which, in this case, has cardinality $\sim d^3$), then
\begin{displaymath} I \geq \frac{1}{2b} \cdot \sum_{cell \;\in \;\mathcal{C}} I_{cell}\geq \frac{1}{2b}\cdot \sum_{cell \;\in\; \mathcal{C}}(|\Gamma_{cell}|-|\Gamma_{cell}'|). \end{displaymath}

Now, if $\gamma \in \Gamma_{cell}$ ($\supseteq \Gamma'_{cell}$), we consider the (unique, irreducible) complex algebraic curve $\gamma_{\C}$ in $\C^3$ which contains $\gamma$. In addition, let $p_{\C}$ be the polynomial $p$ viewed as an element of $\C[x,y,z]$, and $Z_{\C}$ the zero set of $p_{\C}$ in $\C^3$. The polynomial $p$ was constructed in such a way that $\gamma_{\C}$ intersects each of 6 complex planes, each of which contains one of the real planes in $Z$ that each contain a different face of the cube $Q$; consequently $\gamma_{\C}$ intersects $Z_{\C}$ at least once. Moreover, if $\gamma^{(1)}$, $\gamma^{(2)}$ are two distinct curves in $\Gamma$, then $\gamma^{(1)}_{\C}$, $\gamma^{(2)}_{\C}$ are two distinct curves in $\Gamma_{\C}$ (since $\gamma^{(1)}=\gamma^{(1)}_{\C}\cap \R^3$, while $\gamma^{(1)}=\gamma^{(1)}_{\C}\cap \R^3$). So, if $\Gamma_{\C}=\{\gamma_{\C}: \gamma \in \Gamma_{cell}\text{, for some cell in }\mathcal{C}\}$ and $I_{\C}$ denotes the number of incidences between $\Gamma_{\C}$ and $Z_{\C}$, it follows that 
$$I_{\C} \geq|\Gamma_{\C}|=|\Gamma|\geq\big|\bigcup_{cell \;\in\; \mathcal{C}}\Gamma_{cell}'\big|,
$$while also
$$I_{\C} \geq I.
$$
Therefore,
$$I_{\C} \geq \frac{1}{2}\cdot (I+I_{\C}) \geq
$$
\begin{displaymath}\geq \frac{1}{2} \cdot \bigg( \frac{1}{2b} \sum_{cell \;\in\; \mathcal{C}}(|\Gamma_{cell}|-|\Gamma_{cell}'|) + \big|\bigcup_{cell \;\in \;\mathcal{C}}\Gamma_{cell}'\big|\bigg) \sim_b \end{displaymath}
\begin{displaymath} \sim_b\sum_{cell \;\in\; \mathcal{C}}(|\Gamma_{cell}|-|\Gamma_{cell}'|) + \big|\bigcup_{cell \;\in \;\mathcal{C}}\Gamma_{cell}'\big|.\end{displaymath} However, each real algebraic curve in $\R^3$, of degree at most $b$, is the disjoint union of $\leq R_b$ path-connected components, for some constant $R_b$ depending only on $b$ (by Lemma \ref{4.1.15}). Therefore, 
$$\big|\bigcup_{cell \;\in \;\mathcal{C}}\Gamma_{cell}'\big| \sim_b\sum_{cell \;\in \;\mathcal{C}}|\Gamma_{cell}'|,
$$
from which it follows that
 \begin{displaymath}I_{\C}\gtrsim_b \sum_{cell \;\in\; \mathcal{C}}(|\Gamma_{cell}|-|\Gamma_{cell}'|)+\sum_{cell \;\in \;\mathcal{C}}|\Gamma_{cell}'|\sim_b \end{displaymath} \begin{displaymath}\sim_b\sum_{cell \;\in \;\mathcal{C}}|\Gamma_{cell}|\gtrsim_b \sum_{cell \;\in \;\mathcal{C}} S^{1/2}d^{-3/2}k^{(2D_b-1)/(2D_b-2)}\sim_b\end{displaymath} \begin{displaymath}\sim_b \big(S^{1/2}d^{-3/2}k^{(2D_b-1)/(2D_b-2)}\big)\cdot d^3 \sim_b S^{1/2}d^{3/2}k^{(2D_b-1)/(2D_b-2)}.\end{displaymath}
On the other hand, however, each $\gamma_{\C}\in \Gamma_{\C}$ is a complex algebraic curve in $\R^3$, of degree at most $b$, which does not lie in $Z_{\C}$, and thus intersects $Z_{\C}$ at most $b \cdot \deg p$ times. So,
$$I_{\C} \lesssim_b |\Gamma_{\C}| \cdot d \sim_b |\Gamma| \cdot d,
$$
and therefore
\begin{displaymath}S^{1/2}d^{3/2}k^{(2D_b-1)/(2D_b-2)} \lesssim_b |\Gamma| \cdot d,\end{displaymath}
which in turn gives $A_b\lesssim_b 1$. In other words, there exists some constant $C_b$, depending only on $b$, such that $A_b \leq C_b$. By fixing $A_b$ to be a constant larger than $C_b$ (and of course $\geq 2c_{0,b}$, so that $d> 1$), we have a contradiction.

Therefore, in the cellular case there exists some constant $c_{1,b}$, depending only on $b$, such that  \begin{displaymath} S \cdot N^{1/2} \leq c_{1,b} \cdot \frac{|\Gamma|^{3/2}}{k^{1/(2D_b-2)}}.\end{displaymath}

\textbf{Algebraic case:} There exist $<10^{-8}S$ points of $\mathfrak{G}$ in the union of the interiors of the cells. 

We denote by $\Gamma'$ the set of curves in $\Gamma$ each of which contains $\geq \frac{1}{100}Sk|\Gamma|^{-1}$ points of $\mathfrak{G}\cap Z$ which are not isolated points of the curve, and we continue by adapting, to this setting, the proof of Proposition \ref{1.2}, using Corollary \ref{4.1.14} and Lemmas \ref{4.2.2} and \ref{4.2.3}. 

\end{proof}

We are now able to count, with multiplicities, joints formed by a finite collection of curves in $\R^3$, parametrised by real univariate polynomials of uniformly bounded degree.

\begin{corollary} \label{parametrisedcurves} Let $b$ be a positive constant and $\Gamma$ a finite collection of curves in $\R^3$, such that each $\gamma \in \Gamma$ is parametrised by $t \rightarrow \big(p^{\gamma}_1(t)$, $p_2^{\gamma}(t)$, $p_3^{\gamma}(t)\big)$ for $t \in \R$, where the $p_i^{\gamma} \in \R[t]$, for $i=1,2,3$, are polynomials not simultaneously constant, of degrees at most $b$. Let $J$ be the set of joints formed by $\Gamma$. Then,
\begin{displaymath} \sum_{x \in J}N(x)^{1/2} \leq c_b \cdot |\Gamma|^{3/2},\end{displaymath}
where $c_b$ is a constant depending only on $b$.
\end{corollary}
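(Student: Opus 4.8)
The plan is to deduce this from Theorem \ref{4.2.1} by replacing each polynomially parametrised curve with the (unique) real algebraic curve that contains it, and then checking that this substitution only enlarges the set of joints and their multiplicities while not increasing the number of curves. First I would recall the fact established above: if $\gamma\in\Gamma$ is parametrised by $t\mapsto\big(p_1^{\gamma}(t),p_2^{\gamma}(t),p_3^{\gamma}(t)\big)$ with the $p_i^{\gamma}\in\R[t]$ not simultaneously constant and of degree at most $b$, then, letting $\gamma_{\C}$ be the unique irreducible complex algebraic curve carrying the same parametrisation over $\C$, the set $\tilde\gamma:=\gamma_{\C}\cap\R^3$ is an irreducible real algebraic curve in $\R^3$ of degree at most $b$, and $\gamma\subseteq\tilde\gamma$. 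Set $\tilde\Gamma:=\{\tilde\gamma:\gamma\in\Gamma\}$; this is a finite collection of real algebraic curves in $\R^3$ of degree at most $b$, with $|\tilde\Gamma|\le|\Gamma|$ (several members of $\Gamma$ may land in the same $\tilde\gamma$, which is fine).

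The heart of the argument is the comparison of tangent data: I claim $T_x^{\Gamma}\subseteq T_x^{\tilde\Gamma}$ for every $x\in\R^3$. Fix $\gamma\in\Gamma$ with $x\in\gamma$ and suppose a branch of $\gamma$ at $x$ admits a parametrisation $f:[0,1)\to\R^3$ with $f(0)=x$ and $f'(0)\ne 0$; I must show the line through $x$ in the direction $f'(0)$ is tangent to $\tilde\gamma$ at $x$. Since $f$ is locally injective near $0$ (because $f'(0)\ne 0$) and non-constant, $x=f(0)$ is a limit of distinct points of $\tilde\gamma$, hence is not an isolated point of $\tilde\gamma$; so, by the defining property of $\mathcal{F}$ applied to the real algebraic curve $\tilde\gamma\in\mathcal{F}$, a sufficiently small basic neighbourhood of $x$ in $\tilde\gamma$ is a finite union of arcs, each homeomorphic to a semi-open segment with endpoint $x$, and we may take this neighbourhood small enough that these arcs are pairwise disjoint away from $x$ (this reflects the finitely many analytic branches of $\gamma_{\C}$ at $x$). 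Shrinking $\epsilon$ so that $f([0,\epsilon))$ lies in this neighbourhood, connectedness of $f((0,\epsilon))$ forces $f([0,\epsilon))$ to lie inside a single one of these arcs, call it $A$; thus $f$, suitably restricted and linearly reparametrised, is a parametrisation of an initial sub-arc of $A$, and any parametrisation of $A$ starting at $x$ has derivative at $0$ a positive multiple of $f'(0)$. Hence the line through $x$ with direction $f'(0)$ is, by definition, tangent to $\tilde\gamma$ at $x$, proving $T_x^{\Gamma}\subseteq T_x^{\tilde\Gamma}$. The hard part of the whole proof is exactly this local branch analysis; once it is in place everything else is bookkeeping, and I expect the argument above to be the only place needing care.

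Two consequences are then immediate. If $x$ is a joint for $\Gamma$, then $x$ lies in some $\gamma\subseteq\tilde\gamma\in\tilde\Gamma$, and $T_x^{\tilde\Gamma}\supseteq T_x^{\Gamma}$ contains three vectors spanning $\R^3$, so $x$ is a joint for $\tilde\Gamma$; thus $J(\Gamma)\subseteq J(\tilde\Gamma)$. Moreover, since $N(x)$ counts the triples of lines through $x$ whose directions are linearly independent vectors of the relevant tangent set, the inclusion $T_x^{\Gamma}\subseteq T_x^{\tilde\Gamma}$ gives $N_{\Gamma}(x)\le N_{\tilde\Gamma}(x)$ for every $x\in J(\Gamma)$. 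Applying Theorem \ref{4.2.1} to the collection $\tilde\Gamma$ of real algebraic curves of degree at most $b$, we conclude
\begin{displaymath}
\sum_{x\in J(\Gamma)}N_{\Gamma}(x)^{1/2}\;\le\;\sum_{x\in J(\tilde\Gamma)}N_{\tilde\Gamma}(x)^{1/2}\;\le\;c_b\,|\tilde\Gamma|^{3/2}\;\le\;c_b\,|\Gamma|^{3/2},
\end{displaymath}
with $c_b$ the constant from Theorem \ref{4.2.1}, which depends only on $b$. This is the asserted bound, and the proof is complete.
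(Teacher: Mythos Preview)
Your proposal is correct and follows the same approach as the paper: replace each parametrised curve by the irreducible real algebraic curve of degree at most $b$ containing it (the paper's earlier corollary), and then invoke Theorem \ref{4.2.1}. The paper's own proof is two sentences and leaves the comparison $T_x^{\Gamma}\subseteq T_x^{\tilde\Gamma}$ implicit; you have supplied exactly that missing step, which is appropriate since the definition of $N(x)$ depends only on the set $T_x^{\Gamma}$ of tangent directions, so the inclusion immediately yields $N_{\Gamma}(x)\le N_{\tilde\Gamma}(x)$ and $J(\Gamma)\subseteq J(\tilde\Gamma)$.
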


\begin{proof} By Corollary \ref{parametrisedcurves}, each $\gamma \in \Gamma$ is contained in a real algebraic curve in $\R^3$, of degree at most $b$. Therefore, the statement of the Corollary immediately follows from Theorem \ref{4.2.1}.

\end{proof}

\end{document}